\documentclass{jgm}
\usepackage{amsmath}
\usepackage[norelsize]{algorithm2e}
  \usepackage{paralist}
    \usepackage{psfrag}
\usepackage{color}
\usepackage{amssymb}
\usepackage{amsthm}
\usepackage{latexsym}
\usepackage{calrsfs}
  \usepackage{graphics} 
  \usepackage{epsfig} 
\usepackage{graphicx}  \usepackage{epstopdf}
\DeclareMathAlphabet{\pazocal}{OMS}{zplm}{m}{n}
\newcommand{\Ca}{\mathcal{C}}
\newcommand{\Cb}{\pazocal{C}}
\newcommand{\grad}{\nabla}

\newcommand{\Diff}{\mathcal{D}}

\textheight=8.2 true in
 \textwidth=5.0 true in
  \topmargin 30pt
   \setcounter{page}{1}


\newtheorem{theorem}{Theorem}[section]

\newtheorem{lemma}[theorem]{Lemma}
\newtheorem{proposition}{Proposition}

\theoremstyle{definition}
\newtheorem{definition}[theorem]{Definition}
\newtheorem{remark}{Remark}

\title[Quotient elastic metrics on Arcspace] 
      {Quotient elastic metrics on the manifold of \\arc-length parameterized plane curves}

\author[Alice B. Tumpach  and Stephen C. Preston]{}

\subjclass{Primary: 53A04, 58B20; Secondary: 49Q20.}
 \keywords{Shape analysis of curves, quotient elastic metrics, minimisation of the energy functional.}

 \email{Barbara.Tumpach@math.univ-lille1.fr}
 \email{stephen.preston@brooklyn.cuny.edu}


\thanks{$^*$ Corresponding author: Alice B. Tumpach}

\begin{document}
\maketitle

\centerline{\scshape Alice B. Tumpach$^*$}
\medskip
{\footnotesize
 \centerline{Laboratoire Paul Painlev\'e​}
   \centerline{CNRS U.M.R. 8524}
   \centerline{59 655 Villeneuve d'Ascq Cedex, France}
} 

\medskip

\centerline{\scshape Stephen C. Preston}
\medskip
{\footnotesize
 \centerline{Department of Mathematics}
   \centerline{Brooklyn College and CUNY Graduate Center}
   \centerline{New York, USA}
}



\begin{abstract}
We study the pull-back of the $2$-parameter family of quotient elastic metrics introduced in \cite{MioAnuj} on the space of arc-length parameterized curves. This point of view has the advantage of concentrating on the manifold of arc-length parameterized curves, which is a very natural manifold when the analysis of un-parameterized curves is concerned, pushing aside the tricky quotient procedure detailed in \cite{LRK} of the preshape space of parameterized curves by  the reparameterization (semi-)group.
In order to study the problem of finding geodesics between two given arc-length parameterized curves under these quotient elastic metrics, we give a  precise computation of the gradient of the energy functional in the smooth case as well as a discretization of it, and implement a path-straightening method. This allows us to have a better understanding of how the landscape of the energy functional varies with respect to the parameters.
\end{abstract}

\section{Introduction}

The authors of \cite{MioAnuj} introduced a $2$-parameter family of Riemannian metrics $G^{a,b}$ on the space of plane curves that penalizes bending as well as stretching. The metrics within this family are now called elastic metrics. In \cite{Srivastava2011b},  it was shown that, for a certain relation between the parameters, the resulting metric is flat on parameterized open curves, whereas the space of length-one curves is the unit sphere in an Hilbert space, and the space of parameterized closed curves a codimension $2$ submanifold of a flat space. A similar method for simplifying the analysis of plane curves was introduced in \cite{Younes2008}.
These results have been generalized in \cite{Bauer}, where the authors introduced another family of metrics, including the elastic metrics as well as the metric of  \cite{Younes2008}, and studied in which cases these metrics can be described using the restrictions of flat metrics to submanifolds. In particular they showed that, for arbitrary values of the parameters $a$ and $b$, the elastic metrics $G^{a,b}$ are flat metrics on the space of parameterized open curves, and the space of parameterized closed curves a codimension $2$ submanifold of a flat space.
These results have important consequences for shape comparison and form recognition since the comparison of parameterized curves becomes a trivial task and the comparison of un-parameterized curves is greatly simplified.
In this strategy, the space of un-parameterized curves, also called \textit{shape space}, is presented as a  quotient space of the space of parameterized curves, where two parameterized curves are identified  when they differ by a reparameterization. The elastic metrics induce Riemannian metrics on shape space, called quotient elastic metrics. The remaining difficult task in comparing two un-parameterized curves under the quotient elastic metrics is to find a matching between the two curves that minimizes the distance between the corresponding reparameterization-orbits. Given this matching, computing a geodesic between two shapes is again an easy task using the flatness of the metrics.

In \cite{LRK}, a mathematically rigorous development of the quotient elastic metric used in \cite{Srivastava2011b}  is given
(i.e., with the parameters $a = \frac{1}{4}$ and $b=1$), including a careful analysis of the quotient procedure by the reparameterization semi-group.
The authors of \cite{LRK} also showed that a minimizing geodesic always exists between two curves, when at least one of them is piecewise linear.
Moreover, when both curves are piecewise linear, the minimizing geodesic can be represented by a straight line between two piecewise linear curves in the corresponding orbits. In other words the space of piecewise linear curves is a geodesically convex subset of the space of curves for the quotient elastic metric $G^{\frac{1}{4}, 1}$. Finally, in the same paper,
a precise algorithm for the matching problem of piecewise linear curves is implemented, giving a tool to compare shapes in an efficient as well as accurate manner.

In \cite{Bruveris}, it was shown that, in the same context, a minimizing geodesic for the quotient elastic metric $G^{\frac{1}{4}, 1}$ always exists between two $\mathcal{C}^1$-curves $\gamma_1$ and $\gamma_2$, meaning that there exists two elements $\phi_1$ and $\phi_2$ in the reparameterization semi-group such that the straight line between $\gamma_1 \circ \phi_1$ and $\gamma_2 \circ \phi_2$ minimizes the geodesic  distance between the orbits of $\gamma_1$ and $\gamma_2$. However, the reparameterizations $\phi_1$ and $\phi_2$ being a priori only absolutely continuous, it is not clear whether $\gamma_1 \circ \phi_1$ and $\gamma_2 \circ \phi_2$ can be chosen to be $\mathcal{C}^1$. In other words, it is  (to our knowledge) not known whether the subset of $\mathcal{C}^1$-curves is geodesically convex. In addition,  two Lipschitz-curves in the plane are constructed in  \cite{Bruveris}  for which no optimal reparameterizations exist.

In the present paper, we want to pursue another strategy for understanding the quotient elastic metrics on shape space. Indeed, instead of identifying the shape space of un-parameterized curves with a quotient space, we identify it with the space of arc-length parameterized curves. Given a shape in the plane, this consists in endowing it with the preferred parameterization by its arc-length, leading to a uniformly sampled curve. Note that any Riemannian metric on shape space can be understood as a Riemannian metric on the space of arc-length parameterized curves. In the present paper, we endow the space of arc-length parameterized curves with the quotient elastic metrics. In  \cite{KSMJ}, the manifold of arc-length parameterized curves was also studied, but the metrics used there are not the elastic ones. In \cite{Preston}, the second author studied a similar metric and its shape geometry as identified with arc-length parameterized curves; however the  computation in Theorem 6.4 of \cite{Preston} is incorrect since the horizontal space is not computed correctly.

The present paper is organized as follows. In Section~\ref{setup}, we introduce the notation used in the present paper, as well as the manifolds of curves under interest. In Section~\ref{smooth}, we concentrate on the smooth case, and compute the gradient of the energy functional associated to the quotient elastic metrics~$G^{a, b}$. In Section~\ref{discretization}, we consider  a discretization of the smooth case. This is an unavoidable step towards implementation, where each smooth curve is approximated by polygonal lines, and each smooth parameterized curve is approximated by a piecewise linear curve. Finally, in Section~\ref{implementation}, an algorithm for the two-boundary problem is presented, and some properties of the energy landscape depending on the parameters are studied.

\section{Mathematical setup}\label{setup}

\subsection{Manifolds of based parameterized curves}\label{plane_curves}

In this section, we define the manifolds of plane curves that we will consider in the present paper. First some motivation.
Roughly speaking, shape space consists of the set of curves
in the plane.
The difficulty is that although this space should be an infinite-dimensional manifold, it does not have convenient coordinate
charts. The typical approach is to consider all \emph{parameterized} curves $\gamma\colon [0, 1]\to\mathbb{R}^2$ (resp. $\gamma\colon \mathbb{S}^1\to\mathbb{R}^2$ for closed curves), which is a
linear space and hence a manifold, then consider the open subset consisting of free immersions or embeddings, then mod out by
the group of diffeomorphisms of $[0, 1]$ (resp. $\mathbb{S}^1$) which represent the reparameterizations
of a given curve (all of which correspond to the same shape). Here and in the rest of the paper $\mathbb{S}^1$ will denote the circle of length one given by $$\mathbb{S}^1 = \mathbb{R}/\mathbb{Z}.$$
This quotient space admits a structure of smooth Fr\'echet manifold (see Theorem~1.5 in \cite{CMM91} for a detailed construction of the coordinate
charts in the smooth category), and the set of free immersions or embeddings is a principal bundle over this quotient space with structure group the group of diffeomorphisms (see \cite{BBM} for an overview of the theory). In this paper, we will identify this quotient space with the space of arc-length parameterized curves, which is a nice submanifold of the space of parameterized curves (see Theorem~\ref{arcspace} and Theorem~\ref{diffeoquotient} below). See also section~3.1. in \cite{Brylinski}, where an analogous construction is carried out for loops in $\mathbb{R}^3$ and where the K\"ahler structure of these loop spaces is explained.
Let us stress some choices we made:
\begin{itemize}
\item We will work with \emph{based} oriented curves (that is, with a specified start and endpoint) rather than closed curves; the advantage of this is that we have a unique constant-speed parameterization. It is also closer to the implementation, where a curve is replaced by a finite number of points, which are stored in a matrix and indexed from $1$ to $n$. In our applications later the curves will all happen to be closed, but the analysis will be independent of the choice of base point (i.e., of the ordering of the points).
\item We get a further simplification by restricting to those curves of total length one; then we get a \emph{unit}-speed parameterization, and we do not have to carry the length around as an extra parameter.
\item We will work with immersions rather than embeddings since the embedding constraint is somewhat tricky to enforce.
\item Finally since our Riemannian metric (defined in the next section) will depend only on the derivative $\gamma'$, we shall identify all curves up to translation,
which is of course equivalent with simply working with $\gamma'$ rather than $\gamma$, where $\gamma'$ has to satisfy $\int_0^1\gamma'(s) ds = 0$ for closed curves.
\end{itemize}
In this section, $I = [0, 1]$ (for open curves) or $I = \mathbb{S}^1 = \mathbb{R}/\mathbb{Z}$ (for closed curves).

\subsubsection{Curves modulo translations}\label{modtranslation}
Let $k\in\mathbb{N}$, and define a norm on the vector space $\Ca^{k}(I, \mathbb{R}^2)$ of differentiable curves of order $k$, $\gamma\colon I\to\mathbb{R}^2$, by

\ \vspace*{-10pt}
\begin{equation}\label{extratopology}
\lVert \gamma\rVert_{\Ca^k} := \sum_{j=1}^k  \max_{s\in I} \lvert \gamma^{(j)}(s) \rvert,
\end{equation}
where, for $z\in\mathbb{R}^2$, $\lvert z\rvert$ denotes the norm of $z$. 
 The purpose of starting the first sum at $j=1$ instead of $j=0$ is to reduce to the quotient space by translations $\Ca^{k}(I, \mathbb{R}^2)/\mathbb{R}^2$, so that only $\gamma'$ matters. This corresponds to considering curves in $\mathbb{R}^2$ irrespective of their positions in comparison to the origin of $\mathbb{R}^2$. The quotient vector space $\Ca^{k}(I, \mathbb{R}^2)/\mathbb{R}^2$ endowed with the norm induced by \eqref{extratopology} is a Banach space.
We could identify it with any complement to the subspace of constant functions, for instance with the subspace $\Ca^{k}_{cen}(I, \mathbb{R}^2)$ of centered curves (i.e., curves whose center of mass lies at the origin of $\mathbb{R}^2$)
\begin{equation}\label{centered}
\Ca^{k}_{cen}(I, \mathbb{R}^2) = \left\{\gamma\in \Ca^{k}(I, \mathbb{R}^2), \int_0^1\gamma(s) ds = 0\right\},
\end{equation}
or with the subspace $\Ca^{k}_0(I, \mathbb{R}^2)$ of curves starting at $z = 0$
\begin{equation}\label{start0}
\Ca^{k}_0(I, \mathbb{R}^2) = \left\{\gamma\in \Ca^{k}(I, \mathbb{R}^2), \gamma(0)  = 0\right\},
\end{equation}
which are Banach spaces for the norm \eqref{extratopology}. Despite the fact that the identification of the quotient space $\Ca^{k}(I, \mathbb{R}^2)/\mathbb{R}^2$ with a complement to $\mathbb{R}^2$ in
$\Ca^{k}(I, \mathbb{R}^2)$ may seem natural in theory, it introduces unnecessary additional constraints as soon as numerics are involved: indeed restricting ourselves to centered curves implies that the tangent space to a curve contains only centered vector fields, i.e., vector fields $Z$ along the curve which preserve condition~\eqref{centered}, i.e., such that $\int_0^1 Z(s) ds = 0$, and for curves starting at the origin we get the constraint $Z(0) = 0$.
Since the elastic metrics introduced in the next section are degenerate in the direction of translations, the distance between two curves $\gamma_1$ and $\gamma_2$ will match the distances between $\gamma_1+c_1$ and $\gamma_2 + c_2$ for any constants $c_1$ and $c_2$. This degeneracy property implies that in the numerics, we can freely choose how to represent a curves modulo translation. Depending on what we want to emphasize, one may prefer the centered curves or the curves starting at the origin.
\subsubsection{Smooth immersions}
Recall that $\gamma\colon I\to\mathbb{R}^2$ is an immersion if and only if $\gamma'(s)\neq 0$ for all $s\in I$.
In the topology given by the norm \eqref{extratopology}, the set of all  $\Ca^k$-immersions is an open
subset of the Banach space $\Ca^{k}(I, \mathbb{R}^2)/\mathbb{R}^2$, hence a Banach submanifold of $\Ca^{k}(I, \mathbb{R}^2)/\mathbb{R}^2$. It is denoted by $\Cb^k(I)$:
$$
\Cb^k(I) = \left\{\gamma \in \Ca^{k}(I, \mathbb{R}^2)/\mathbb{R}^2, \gamma'(s)\neq 0, \forall s \in I\right\}.
$$
 The vector space $\Ca^{\infty}(I, \mathbb{R}^2)/\mathbb{R}^2 = \cap_{k=1}^\infty \Ca^{k}(I, \mathbb{R}^2)/\mathbb{R}^2$ of smooth curves $\gamma\colon I\to\mathbb{R}^2$ modulo translations  endowed with the family of norms
$\lVert \cdot\rVert_{\Ca^k}$ is a graded Fr\'echet space (see Definition~II.1.1.1 in \cite{Ham82}). The space of smooth immersions
\begin{equation}\label{K0def}
\Cb(I) = \bigcap_{k=1}^{\infty}\Cb^k(I) = \{\gamma \in \Ca^{\infty}(I, \mathbb{R}^2)/\mathbb{R}^2, \gamma'(s)\neq 0, \forall s \in I\}.
\end{equation}
is an open set of $\mathcal{C}^{\infty}(I, \mathbb{R}^2)/\mathbb{R}^2$ for the topology induced by the family of norms $\lVert \cdot\rVert_{\Ca^k}$, hence a Fr\'echet manifold.
\begin{remark}
In the space of smooth immersions $\Cb([0, 1])$, we can consider the subset of curves $\gamma$  which are closed, i.e., such that $\gamma(0) = \gamma(1)$, or equivalently such that $\int_0^1\gamma'(s) ds = 0$. Let us denote it by $\Cb_c([0, 1])$. Then $\Cb(\mathbb{S}^1)\subset \Cb_c([0, 1])$. Indeed a curve $\gamma\in \Cb(\mathbb{S}^1)$ has all its derivatives matching at $0$ and $1$, whereas a curve in $\Cb_c([0, 1])$ may have a failing in smoothness at $0$. Note that $\mathcal{C}^{\infty}(\mathbb{S}^1, \mathbb{R}^2)$ is a closed subset in $\mathcal{C}^{\infty}([0,1], \mathbb{R}^2)$ which is not a direct summand (see Example 1.2.2 in \cite{Ham82}). Moreover note that the derivative which maps $\gamma$ to $\gamma'$ from $\mathcal{C}^{\infty}(I, \mathbb{R}^2)/\mathbb{R}^2$ into $\mathcal{C}^{\infty}(I, \mathbb{R}^2)$ is onto for open curves, but has range equal to the closed subspace $\{f\in \mathcal{C}^{\infty}(\mathbb{S}^1, \mathbb{R}^2), \int_0^1f(s)\,ds = 0\}$ for closed curves.
\end{remark}

\subsubsection{Length-one curves}
We denote the subset of length-one immersions modulo translations by
\begin{equation}\label{K1def}
\Cb_1(I) = \{ \gamma \in \Cb(I) : \int_0^1 \lvert \gamma'(s)\rvert \, ds = 1\}.
\end{equation}
Recall that the implicit function theorem is invalid for general Fr\'echet manifolds, but is valid in the category of tame Fr\'echet manifolds and tame smooth maps,
and is known as the implicit function theorem of Nash-Moser (Theorem~III.2.3.1 of \cite{Ham82}, page 196). Recall that a linear map $A\colon F_1\rightarrow F_2$ between graded Fr\'echet spaces is
\emph{tame} if there exists some $r$ and $b$ such that $\|Af\|_{n}\leq C_n\|f\|_{n+r}$ for each $n\geq b$ and some constants $C_n$ (see Definition~II.1.2.1 page 135 in \cite{Ham82}).
A Fr\'echet space is \emph{tame} if it is a tame direct summand in a space $\Sigma(B)$ of exponentially decreasing sequences in some Banach space $B$.
A nonlinear map $P$ from an open set $U$ of a graded Fr\'echet space $F_1$
into another graded Fr\'echet space $F_2$ is \emph{tame} if it is continuous and if there exists $r$ and $b$ such that
$$\|P(f)\|_n \leq C_n(1 + \|f\|_{n+r})$$ for each $n\geq b$ and some constants $C_n$ (see Definition II.2.1.1. page 140 in \cite{Ham82}).
A \emph{tame Fr\'echet manifold} is a manifold modelled on a tame Fr\'echet space, such that all transition functions are tame.

\begin{proposition}\label{lengthoneprop}
The subset $\Cb_1(I)$ of length-one immersions modulo translations defined by \eqref{K1def} is a tame $\Ca^{\infty}$-submanifold of the tame Fr\'echet manifold $\Cb(I)$ of immersions modulo translations defined by \eqref{K0def}
for the Fr\'echet manifold structure induced by the family of norms given in \eqref{extratopology}.
\end{proposition}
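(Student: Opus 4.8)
The plan is to realize $\Cb_1(I)$ as the regular level set $L^{-1}(1)$ of the length functional
$$L\colon \Cb(I)\to\mathbb{R},\qquad L(\gamma)=\int_0^1 |\gamma'(s)|\,ds,$$
and then to build an explicit submanifold chart around each point of $\Cb_1(I)$ by applying the Nash--Moser inverse function theorem (Theorem~III.2.3.1 in \cite{Ham82}). First I would verify that $L$ is a smooth tame map: since $\gamma$ is an immersion, $|\gamma'|=\langle\gamma',\gamma'\rangle^{1/2}$ is bounded away from zero locally uniformly, so $L$ is the composite of the tame linear operator $\gamma\mapsto\gamma'$, composition with the smooth function $x\mapsto\sqrt{x}$ on the positive reals (tame by Hamilton's composition estimates), and integration (a bounded, hence tame, linear functional). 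A direct computation gives the derivative
$$DL(\gamma)(Z)=\int_0^1 \langle T_\gamma(s),Z'(s)\rangle\,ds,\qquad T_\gamma:=\gamma'/|\gamma'|,$$
which satisfies $|DL(\gamma)(Z)|\le \lVert Z\rVert_{\Ca^1}$ and is therefore a tame linear functional; crucially it never vanishes on $\Cb(I)$.

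Next I would fix $\gamma_0\in\Cb_1(I)$ and select a tangent vector $W$ in the model space $F$ (namely $F=\Ca^\infty(I,\mathbb{R}^2)/\mathbb{R}^2$ for open curves, and its closed subspace of curves with $\int_0^1 W'=0$ for closed curves) normalized so that $DL(\gamma_0)(W)=1$. Such a $W$ exists in both cases: for open curves one takes $W'=T_{\gamma_0}$, so that $DL(\gamma_0)(W)=\int_0^1|T_{\gamma_0}|^2\,ds=1$; for closed curves one takes $W'=T_{\gamma_0}-\int_0^1 T_{\gamma_0}$, which has mean zero and yields $DL(\gamma_0)(W)=1-\lvert\int_0^1 T_{\gamma_0}\rvert^2>0$, the strict inequality holding because a closed immersion cannot have constant unit tangent. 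The pair $(DL(\gamma_0),W)$ then determines a tame splitting $F=K\oplus\mathbb{R}W$ with $K=\ker DL(\gamma_0)$, the projections $Z\mapsto Z-DL(\gamma_0)(Z)\,W$ and $Z\mapsto DL(\gamma_0)(Z)\,W$ being tame; in particular $K$ is a tame direct summand and hence a tame Fr\'echet space.

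Using that $\Cb(I)$ is an open subset of $F$, so that differences $\gamma-\gamma_0$ make sense, I would then introduce the candidate chart
$$\Phi(\gamma)=\Big((\gamma-\gamma_0)-DL(\gamma_0)(\gamma-\gamma_0)\,W,\;L(\gamma)-1\Big)\in K\times\mathbb{R},$$
which is smooth tame, satisfies $\Phi(\gamma_0)=(0,0)$, and carries $\Cb_1(I)$ into the tame direct summand $K\times\{0\}$. Its derivative is $D\Phi(\gamma)(Z)=\big(P_K Z,\,DL(\gamma)(Z)\big)$, where $P_K$ is the projection onto $K$; at $\gamma_0$ this is exactly the tame linear isomorphism $F\cong K\times\mathbb{R}$ given by the splitting. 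For $\gamma$ near $\gamma_0$ one has $DL(\gamma)(W)\neq0$ by continuity, and $D\Phi(\gamma)$ is then invertible with the explicit inverse $(u,t)\mapsto u+\frac{t-DL(\gamma)(u)}{DL(\gamma)(W)}\,W$. The final step is to confirm that this family of inverses is smooth tame in $(\gamma,(u,t))$ and to invoke the Nash--Moser inverse function theorem, concluding that $\Phi$ is a local tame diffeomorphism and hence a submanifold chart exhibiting $\Cb_1(I)$ as a tame $\Ca^\infty$-submanifold of codimension one.

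I expect the main obstacle to lie in the tame estimates underlying this last step, rather than in the formal linear-algebraic scheme. Concretely, one must show that $\gamma\mapsto T_\gamma=\gamma'/|\gamma'|$ together with the pointwise multiplications and the division by the nowhere-vanishing scalar $DL(\gamma)(W)$ assemble into a \emph{smooth tame} family of inverses. This rests on the immersion condition $\gamma'\neq0$, which keeps $|\gamma'|$ bounded away from zero and permits composition with the smooth map $x\mapsto x^{-1/2}$; the required tameness of multiplication, of composition with smooth functions, and of inversion of nonvanishing functions are precisely the estimates developed in \cite{Ham82}, and carrying them through carefully is the technical heart of the argument.
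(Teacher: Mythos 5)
Your proof is correct, but it follows a genuinely different route from the paper's. The paper first establishes that $\Cb(I)$ is a tame Fr\'echet manifold and then passes to the $(\sigma,\theta)$-coordinates defined by $\gamma'(s)=e^{\sigma(s)}(\cos\theta(s),\sin\theta(s))$; in these coordinates the length constraint becomes $L(\sigma)=\int_0^1 e^{\sigma(s)}\,ds=1$, the derivative is simply $DL_{(\sigma,\theta)}(\rho,\phi)=\int_0^1\rho\, e^{\sigma}\,ds$, and the kernel splits by the elementary decomposition $(\rho,\phi)=(\rho-C,\phi)+(C,0)$ with $C=\int_0^1\rho\,e^\sigma\,dx$, after which the Nash--Moser implicit function theorem applies immediately. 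You instead work directly in the ambient linear model $F=\Ca^\infty(I,\mathbb{R}^2)/\mathbb{R}^2$, treat $\Cb_1(I)$ as the level set of $L(\gamma)=\int_0^1|\gamma'|\,ds$, build the splitting $F=\ker DL(\gamma_0)\oplus\mathbb{R}W$ with an explicitly chosen $W$ (with a correct and careful case distinction between open and closed curves, including the observation that a closed immersion cannot have constant unit tangent), and produce a codimension-one submanifold chart whose invertibility you get from the Nash--Moser inverse function theorem. What the paper's coordinates buy is that the exponential parameterization encodes the immersion condition automatically, so no square roots, quotients by $|\gamma'|$, or nonvanishing-denominator arguments ever appear, and the same charts are reused later (e.g., Theorem~\ref{arcspace} describes $\mathcal{A}_1(I)$ as $\{\sigma\equiv 0\}$); what your approach buys is that you never need to construct or verify tameness of that coordinate atlas, you get an explicit submanifold chart, and the treatment of open and closed curves is uniform in the ambient space — at the cost of the heavier (though standard, and correctly attributed to \cite{Ham82}) tame estimates for composition with $x\mapsto\sqrt{x}$, multiplication, and division by nonvanishing functions, which you rightly identify as the technical heart. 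Two minor points to fix: the theorem you invoke for local invertibility is Hamilton's inverse function theorem (Theorem~III.1.1.1 in \cite{Ham82}), whereas Theorem~III.2.3.1 is the implicit function theorem used by the paper; and your argument silently assumes that the model space $F$ is a tame Fr\'echet space (needed both for the Nash--Moser theorem and for the notion of tame direct summand), a fact the paper proves as the first step of its own argument via Hamilton's criteria for $\Ca^\infty(I,\mathbb{R})$ with $I$ compact — you should either cite that step or reproduce it.
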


\begin{proof}
As an open set of $\mathcal{C}^{\infty}(I, \mathbb{R}^2)/\mathbb{R}^2$,  $\Cb(I)$ is a manifold with only one chart, hence a $\Ca^\infty$-manifold. Moreover,
$\Cb(I)$ is a tame Fr\'echet manifold in the sense of Definition~II.2.3 in \cite{Ham82}. To see this,  first note that by Theorem~II.1.3.6 page 137 in \cite{Ham82}, $\Ca^\infty(I,\mathbb{R})$ is tame since $I$ is compact. Moreover  by Lemma~II.1.3.4. page 136, the Cartesian product of two tame spaces is tame. It follows that $\Ca^\infty(I,\mathbb{R}^2)$ is a tame Fr\'echet space. By Lemma~II.1.3.3 in \cite{Ham82}, the subspace
$\mathcal{C}^{\infty}_0(I, \mathbb{R}^2)$ is also tame because its complement is one-dimensional and any map from a tame Fr\'echet space into a finite dimensional space is tame.
Since the quotient $\mathcal{C}^{\infty}(I, \mathbb{R}^2)/\mathbb{R}^2$ is isomorphic as a Fr\'echet space to $\mathcal{C}^{\infty}_0(I, \mathbb{R}^2)$, it is also tame. Hence $\Cb(I)$
is modelled on a tame Fr\'echet space and since there is only one transition function which is the identity hence tame, $\Cb(I)$ is a tame Fr\'echet manifold. Let us endow it with the complete atlas
consistent with this $\Ca^\infty$ tame manifold structure. In particular, the following coordinate charts, as used in \cite{Preston}, belong to the atlas: for each
$\gamma\in \Cb$ we write
\begin{equation}\label{gammacoords}
\gamma'(s) = e^{\sigma(s)} \big( \cos{\theta(s)}, \sin{\theta(s)}\big) =  e^{\sigma(s) + i\theta(s)},
\end{equation}
where $\sigma \in \Ca^\infty(I,\mathbb{R})$ and $\theta\in \Ca^\infty(I, \mathbb{R})$.
We get a diffeomorphism from the open set
$$
\left\{(\sigma, \theta)\in\Ca^\infty(I,\mathbb{R})\times \Ca^\infty(I, \mathbb{R}), \theta(0)\in]\theta_0+2\pi n, \theta_0+2\pi(n+1)[\right\}
$$
of the Fr\'echet space $\Ca^\infty(I,\mathbb{R})\times \Ca^\infty(I, \mathbb{R})$
onto the open subset of $\Cb(I)$ consisting of those curves such that $\frac{\gamma'(0)}{|\gamma'(0)|}\ne e^{i\theta_0}$.
The coordinate transition functions
are easily seen to be the identity in the first component (since $\rho$ is uniquely determined) and horizontal translations in the
second component, hence are clearly tame.

In $(\sigma, \theta)$-coordinates the condition \eqref{K1def} is described by the condition $L(\sigma) = 1$, where $$L(\sigma) = \int_0^1 e^{\sigma(s)} \,ds.$$ Hence $\Cb_1(I)$ is the
inverse image of a real function that is obviously $\Ca^{\infty}$. The derivative of $L$ with respect to $(\sigma, \theta)$-coordinates
may be expressed as
$$DL_{(\sigma,\theta)}(\rho,\phi) = \frac{\partial}{\partial t}\Big|_{t=0} L(\sigma+t\rho, \theta+t\phi) = \int_0^1 \rho(s) e^{\sigma(s)} \, ds;$$
the kernel of this map splits at any $(\sigma,\theta)\in L^{-1}(1)$ since we can write
$$ (\rho, \phi) = \Big( \rho - C, \phi\Big) + \Big(C, 0\Big), \qquad C = \int_0^1 \rho(x) e^{\sigma(x)} \, dx,$$
where $\Big( \rho - C, \phi\Big)$ belongs to the kernel of $DL_{(\sigma,\theta)}$, which is closed, and $\Big(C, 0\Big)$ belongs to a one-dimensional subspace of $\Ca^\infty(I,\mathbb{R})\times \Ca^\infty(I, \mathbb{R})$, which is therefore also closed. Since the image of $(C,0)$ is obviously $C$, the derivative is also
surjective.

By the implicit function theorem of Nash-Moser (Theorem~III.2.3.1 of \cite{Ham82}, page 196), $\Cb_1(I)$ is a smooth tame submanifold of $\Cb(I)$.
\end{proof}

\subsubsection{Arc-length parameterized curves}
Now we consider the space $\mathcal{A}_1(I)$ of arc-length parameterized curves on $I$ modulo translations:
\begin{equation}\label{arcspacedef}
\mathcal{A}_1(I) = \{\gamma\in \Cb(I)~: \lvert \gamma'(s)\rvert = 1, \;\forall s\in I\}.
\end{equation}
Obviously $\mathcal{A}_1(I)\subset \Cb_1(I)$.

\begin{theorem}\label{arcspace}
The space $\mathcal{A}_1(I)$ of arc-length parameterized curves on $I$ modulo translations defined by \eqref{arcspacedef} is a tame $\Ca^{\infty}$-submanifold of $\Cb(I)$,
and thus also of $\Cb_1(I)$. Its tangent space at a curve $\gamma$ is
$$
T_{\gamma}\mathcal{A}_1 = \{w \in \mathcal{C}^{\infty}(\mathbb{S}^1, \mathbb{R}^2), w'(s)\cdot \gamma'(s) = 0, \quad \forall s\in \mathbb{S}^1\}.
$$
\end{theorem}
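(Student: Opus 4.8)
The plan is to reuse the angle--speed coordinates \eqref{gammacoords} introduced in the proof of Proposition~\ref{lengthoneprop}, in which the defining constraint becomes \emph{linear}, so that no appeal to the Nash--Moser implicit function theorem is needed. Writing $\gamma'(s) = e^{\sigma(s) + i\theta(s)}$ one has $\lvert\gamma'(s)\rvert = e^{\sigma(s)}$, so the condition $\lvert\gamma'(s)\rvert = 1$ for all $s\in I$ that defines $\mathcal{A}_1(I)$ in \eqref{arcspacedef} is equivalent to $\sigma \equiv 0$. First I would record that each $(\sigma,\theta)$-chart carries $\mathcal{A}_1(I)$ onto the intersection of its (open) image with the closed linear subspace $\{0\}\times\Ca^\infty(I,\mathbb{R})$ of $\Ca^\infty(I,\mathbb{R})\times\Ca^\infty(I,\mathbb{R})$; since that intersection is relatively open in the slice, $\mathcal{A}_1(I)$ is locally modelled on $\Ca^\infty(I,\mathbb{R})$.

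Next I would verify that this slice is \emph{tamely} complemented, which is exactly what upgrades ``closed complemented subspace'' to ``tame submanifold'' in the sense used for $\Cb_1(I)$. The complement is the closed subspace $\Ca^\infty(I,\mathbb{R})\times\{0\}$, and the two projections $(\rho,\phi)\mapsto(\rho,0)$ and $(\rho,\phi)\mapsto(0,\phi)$ are linear and bounded in every norm $\lVert\cdot\rVert_{\Ca^k}$, hence tame; the model space $\Ca^\infty(I,\mathbb{R})$ is itself tame by Theorem~II.1.3.6 of \cite{Ham82}, as already used in Proposition~\ref{lengthoneprop}. Because the transition functions between the $(\sigma,\theta)$-charts are the identity in $\sigma$ and horizontal translations in $\theta$, they preserve the slice $\{\sigma=0\}$ and restrict to tame transition functions on it. Thus the restricted charts form a tame atlas exhibiting $\mathcal{A}_1(I)$ as a tame $\Ca^\infty$-submanifold of $\Cb(I)$; and since $\mathcal{A}_1(I)\subset\Cb_1(I)\subset\Cb(I)$ with $\Cb_1(I)$ itself a submanifold by Proposition~\ref{lengthoneprop}, it is also a submanifold of $\Cb_1(I)$.

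For the tangent space I would differentiate the chart relation. A tangent vector $w$ at $\gamma$ is the velocity of a path $\gamma_t$ with $\gamma_0=\gamma$; writing $\gamma_t'=e^{\sigma_t+i\theta_t}$ and differentiating at $t=0$ gives $w'=(\rho+i\phi)\gamma'$ with $\rho=\dot\sigma_0$ and $\phi=\dot\theta_0$. The tangent space to $\mathcal{A}_1(I)$ is the slice $\{\rho=0\}$, and since $\lvert\gamma'\rvert=1$ on $\mathcal{A}_1(I)$ we have $w'\overline{\gamma'}=\rho+i\phi$, so that $\rho(s)=\operatorname{Re}\big(w'(s)\overline{\gamma'(s)}\big)=w'(s)\cdot\gamma'(s)$. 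Hence $w$ is tangent to $\mathcal{A}_1(I)$ precisely when $w'(s)\cdot\gamma'(s)=0$ for all $s$, which is the asserted description. The same formula follows more directly by differentiating the constraint $\lvert\gamma_t'(s)\rvert^2\equiv 1$ at $t=0$, yielding $2\,\gamma'(s)\cdot w'(s)=0$ for one inclusion, with the coordinate computation supplying the reverse.

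The computation is genuinely routine once the right coordinates are in place, and the only point deserving care is the tameness of the splitting, i.e.\ confirming that $\{\sigma=0\}$ is a tame direct summand and not merely a closed complemented subspace --- the distinction between the Fr\'echet and Banach settings. An alternative route avoiding coordinates is to realize $\mathcal{A}_1(I)$ as the zero set of the tame map $\Phi(\gamma)=\lvert\gamma'\rvert^2-1$ into $\Ca^\infty(I,\mathbb{R})$ and invoke the Nash--Moser implicit function theorem: here $D\Phi_\gamma(w)=2\,\gamma'\cdot w'$ admits the explicit tame right inverse $f\mapsto w$ with $w'=\tfrac12 f\gamma'$ on $\mathcal{A}_1(I)$, so the hypotheses hold. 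This second argument is more costly than the linear-slice one, so I would relegate it to a remark.
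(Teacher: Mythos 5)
Your proposal is correct and takes essentially the same approach as the paper: the paper's proof likewise observes that in any $(\sigma,\theta)$-coordinate chart the space $\mathcal{A}_1(I)$ is the linear slice $\{\sigma\equiv 0\}$, that these charts are tame, and that the submanifold property relative to $\Cb_1(I)$ and the tangent-space formula follow. Your write-up simply fills in details the paper declares ``very simple'' or ``straightforward'' (the tameness of the splitting, the chart-level tangent computation), and your Nash--Moser alternative is an optional extra that the paper, deliberately, avoids.
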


\begin{proof}
The proof is very simple: the space $\mathcal{A}_1(I)$ is closed and looks, in any $(\sigma, \theta)$-coordinate chart,
like $\{(\sigma, \theta): \sigma\equiv 0\}$, which is just the definition of a submanifold. Since the $(\sigma, \theta)$-coordinate charts are tame, $\mathcal{A}_1(I)$ is a tame submanifold of $\Cb(I)$.
The fact that $\mathcal{A}_1(I)$ is also a smooth Fr\'echet tame submanifold of $\Cb_1(I)$ follows from the universal
mapping property of submanifolds. The expression of the tangent space is straightforward.
\end{proof}

\subsubsection{Reparameterizations of curves}
Reparameterizations of open curves are given by smooth diffeomorphisms
$\phi\in \Diff^+([0,1])$, the plus sign denoting that these
diffeomorphisms preserve $0$ and $1$.
For closed curves, we will denote by $\Diff^+(\mathbb{S}^1)$ the group of diffeomorphisms of $\mathbb{S}^1$  preserving the orientation. In the following we will denote by $\mathcal{G}(I)$ either the group $\Diff^+([0,1])$ when considering open curves (i.e., when $I = [0, 1]$), or $\Diff^+(\mathbb{S}^1)$ for closed curves (i.e., when $I = \mathbb{S}^1 = \mathbb{R}/\mathbb{Z}$).
By Theorem~II.2.3.5 
in \cite{Ham82}, $\mathcal{G}(I)$ is  a tame Fr\'echet Lie group.

\begin{proposition}
The right action $\Gamma\colon \Cb(I)\times\mathcal{G}(I) \rightarrow \Cb(I)$, $\Gamma(\gamma,\psi)=\gamma\circ\psi$ of the group of reparameterizations $\mathcal{G}(I)$ on the tame Fr\'echet manifold
$\Cb(I)$ is smooth and tame, and preserves $\Cb_1(I)$.
\end{proposition}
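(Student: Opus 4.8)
The plan is to treat the three assertions separately, after reducing the action to the behaviour of the composition operator on derivatives. First I would check well-definedness: for $\gamma\in\Cb(I)$ and $\psi\in\mathcal{G}(I)$ the chain rule gives $(\gamma\circ\psi)'(s)=\gamma'(\psi(s))\,\psi'(s)$, and since $\gamma'$ never vanishes and $\psi'>0$ (the diffeomorphisms in $\mathcal{G}(I)$ being orientation-preserving, and fixing the endpoints in the open-curve case), the composite $\gamma\circ\psi$ is again a smooth immersion. Moreover, replacing $\gamma$ by $\gamma+c$ for a constant $c\in\mathbb{R}^2$ replaces $\gamma\circ\psi$ by $\gamma\circ\psi+c$, so the map descends to the quotient $\Ca^{\infty}(I,\mathbb{R}^2)/\mathbb{R}^2$ and hence genuinely defines an action on $\Cb(I)$.

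For smoothness and tameness, the key observation is that $\Gamma$ is built from the composition map $(\gamma,\psi)\mapsto\gamma\circ\psi$ together with pointwise multiplication, both of which are the prototypical tame smooth operations studied by Hamilton. I would invoke the composition and evaluation results underlying the construction of $\mathcal{G}(I)$ as a tame Fr\'echet Lie group (Section~II.2 of \cite{Ham82}) to conclude that composition is smooth and tame on $\Ca^{\infty}(I,\mathbb{R}^2)\times\mathcal{G}(I)$, and then restrict to the open set $\Cb(I)$. Concretely, the Gateaux derivatives are themselves compositions: the partial derivative in the curve direction $h$ is $h\circ\psi$, and in the reparameterization direction $v$ is $(\gamma'\circ\psi)\,v$; iterating, all higher derivatives are polynomial expressions (\`a la Fa\`a di Bruno) in the derivatives of $\gamma$, $\psi$ and $v$, evaluated after composition with $\psi$. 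Continuity and joint continuity of these follow from the corresponding statements for composition, giving smoothness in Hamilton's sense.

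The hard part, if one does not simply cite \cite{Ham82}, is the tame estimate itself: because a single derivative of $\gamma\circ\psi$ already involves a derivative of $\psi$, the composition operator loses derivatives and cannot be smooth in any Banach $\Ca^{k}$-topology, which is exactly why the tame Fr\'echet category is needed here. Establishing a tame bound of the schematic form $\|\gamma\circ\psi\|_{\Ca^{n}}\le C_n\bigl(1+\|\gamma\|_{\Ca^{n}}+\|\psi\|_{\Ca^{n}}\bigr)$ (and its analogues for the derivatives computed above) requires the Fa\`a di Bruno expansion together with interpolation inequalities to control the products of intermediate derivatives; this is precisely the content of Hamilton's composition lemmas, which I would quote rather than reprove.

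Finally, preservation of $\Cb_1(I)$ is the easy part and follows from a change of variables: with the substitution $u=\psi(s)$, and using $\psi'>0$,
\[
\int_0^1\bigl|(\gamma\circ\psi)'(s)\bigr|\,ds=\int_0^1\bigl|\gamma'(\psi(s))\bigr|\,\psi'(s)\,ds=\int_0^1\bigl|\gamma'(u)\bigr|\,du,
\]
so $\gamma\circ\psi$ has length one whenever $\gamma$ does. Hence $\Gamma$ restricts to an action on $\Cb_1(I)$, which completes the argument.
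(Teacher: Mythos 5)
Your proposal is correct, and its core is the same as the paper's: the paper also reduces everything to the composition structure, computes exactly the same first derivative $(D\Gamma)_{(\gamma,\phi)}(w,\zeta)=w\circ\phi+(\gamma'\circ\phi)\,\zeta$, and disposes of higher derivatives by noting they involve only finitely many derivatives of $\gamma$ and $\phi$, with tameness coming from chain-rule estimates in the spirit of Hamilton's composition lemmas. The differences are ones of completeness and delegation rather than of route. Where the paper sketches the continuity/tameness estimate directly ($\lVert\gamma_1\circ\phi-\gamma_2\circ\phi\rVert_{\Ca^k}$ bounded via the chain rule in terms of $\lVert\gamma_1-\gamma_2\rVert_{\Ca^k}$ and $\lVert\phi\rVert_{\Ca^k}$), you cite Hamilton's composition results in \cite{Ham82}; both are legitimate, and your remark that composition loses derivatives---so the map cannot be smooth in any fixed Banach $\Ca^k$ topology, which is precisely why the tame Fr\'echet category is needed---is a point the paper leaves implicit. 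More notably, your proof covers two items the paper's own proof omits entirely: the well-definedness of the action on the quotient $\Ca^{\infty}(I,\mathbb{R}^2)/\mathbb{R}^2$ and on immersions, and the preservation of $\Cb_1(I)$ via the change of variables
\[
\int_0^1\bigl|(\gamma\circ\psi)'(s)\bigr|\,ds=\int_0^1\bigl|\gamma'(u)\bigr|\,du ,
\]
even though the latter is explicitly part of the statement being proved. In that respect your argument is the more complete of the two.
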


\begin{proof}
Note that the action $\Gamma$ of $\mathcal{G}(I)$ on $\Cb(I)$ is continuous  for the Fr\'echet manifold structure on $\Cb(I)$ since
$$\lVert \gamma_1\circ\phi - \gamma_2\circ\phi\rVert_{\Ca^k} = \sum_{j=1}^k \max_{s\in I} \Big\lvert \frac{d^j}{ds^j}\gamma_1(\phi(s))-\frac{d^j}{ds^j}\gamma_2(\phi(s))\Big\rvert $$
can be bounded by the chain rule in terms of $\lVert \gamma_1-\gamma_2\rVert_{\Ca^k}$ and $\lVert \phi\rVert_{\Ca^k}$. It follows that $\Gamma$ is tame.
Moreover the action   of $\mathcal{G}(I)$ on $\Cb(I)$ is differentiable: considering a family $\phi(t,s)\in\mathcal{G}(I)$ and $\gamma(t,s)\in\Cb(I)$ with
$\phi_t(0,s) = \zeta(s)$ in the Lie algebra $\textrm{Lie}(\mathcal{G}(I))$ of $\mathcal{G}(I)$, and $\gamma_t(0,s)=w(s)\in \mathcal{C}^{\infty}(I, \mathbb{R}^2)/\mathbb{R}^2$.
The derivative of the action $\Gamma:=(\gamma,\phi)\mapsto\gamma\circ\phi$ is
\begin{equation}\label{Gammaderivative}
\begin{split}
(D\Gamma)_{(\gamma,\phi)}(w,\zeta) &= \frac{\partial}{\partial t}\Big|_{t=0} \gamma\big(t,\phi(t,s)\big) =
\gamma_t(t,\phi(t,s)) + \gamma_s(t,\phi(t,s)) \phi_t(t,s)\Big|_{t=0} \\
&= w(\phi(s)) + \gamma'(\phi(s)) \zeta(s).
\end{split}
\end{equation}
Since the map which assigns $\gamma\in\Cb(I)$ to $\gamma'\in\Cb(I)$ satisfies
$\|\gamma'\|_n \leq \|\gamma\|_{n+1}$,  it is a tame linear map (with $r = 1$ and $b = 1$), continuous for the Fr\'echet manifold structure on $\Cb(I)$.
Hence $D\Gamma$ is continuous as a map  from a neighborhood of $(\gamma, \phi)$ in $\mathcal{G}(I)\times\Cb(I)$ times the Fr\'echet space
$\textrm{Lie}(\mathcal{G}(I))\times \mathcal{C}^{\infty}(I, \mathbb{R}^2)/\mathbb{R}^2$ into $\mathcal{C}^{\infty}(I, \mathbb{R}^2)/\mathbb{R}^2$, and tame.
More generally, the $k$th derivative of the action $\Gamma$ will involve only a finite number of derivatives of the curve $\gamma$, hence will be continuous and tame.
\end{proof}

\subsubsection{Quotient spaces}
Recall that an immersion $\gamma\colon I\rightarrow \mathbb{R}^2$ is free if and only if the group of reparameterizations $\mathcal{G}(I)$ acts freely on $\gamma$, i.e., the only diffeomorphism $\psi$ satisfying $\gamma\circ\psi = \gamma$ is the identity.
By Lemma 1.3 in \cite{CMM91}, a diffeomorphism having a fixed point and stabilizing a given immersion is necessarily equal to the identity map. Hence for open curves, every smooth immersion is free, since any diffeomorphism in $\mathcal{D}^+([0, 1])$ fixes $0$ and $1$.
For closed curves, the set of free immersions is an open set in the space of immersions (see \cite{CMM91}, section~1).   We will denote it by $\Cb^{f}(I)$. Note that since $I$ is compact, any $f\in \Cb(I)$ is proper. Recall the following theorem in \cite{CMM91}:
\begin{theorem}{(Theorem~1.5 in \cite{CMM91})}\label{quotient}
The quotient space $\Cb^{f}(I)/\mathcal{G}(I)$ of free immersions by the group of diffeomorphisms $\mathcal{G}(I)$ admits a Fr\'echet manifold structure such that the canonical projection $\pi\colon \Cb^{f}(I)\rightarrow \Cb^{f}(I)/\mathcal{G}(I)$ defines a smooth principal bundle with structure group $\mathcal{G}(I)$.
\end{theorem}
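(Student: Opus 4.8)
The plan is to run the classical slice-theorem argument of \cite{CMM91} in this Fr\'echet setting: for each free immersion I would construct an explicit local slice transverse to its $\mathcal{G}(I)$-orbit and then show that slice-times-group maps diffeomorphically onto a saturated neighbourhood of the orbit, which is precisely a local trivialization exhibiting $\pi$ as a principal $\mathcal{G}(I)$-bundle. Concretely, I would fix $\gamma_0\in\Cb^f(I)$ together with a representative, let $T_0 = \gamma_0'/|\gamma_0'|$ be the unit tangent and $\nu_0 = J T_0$ the unit normal (with $J$ the rotation by $\pi/2$), and note that for $a\in\Ca^\infty(I,\mathbb{R})$ of small $\Ca^1$-norm the curve $\gamma_0 + a\,\nu_0$ is again an immersion. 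I would take as slice
\[
S_{\gamma_0} = \{\,\gamma_0 + a\,\nu_0 \;:\; a\in\Ca^\infty(I,\mathbb{R}),\ \lVert a\rVert_{\Ca^1} < \delta\,\},
\]
an open neighbourhood of $\gamma_0$ in the affine normal subspace, and everything reduces to proving that
\[
\Psi\colon S_{\gamma_0}\times\mathcal{G}(I)\longrightarrow \Cb^f(I),\qquad \Psi(\sigma,\phi) = \sigma\circ\phi,
\]
is a diffeomorphism onto an open, $\mathcal{G}(I)$-saturated neighbourhood of the orbit of $\gamma_0$.

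The key point, which lets me bypass any inverse function theorem in the Fr\'echet category, is that $\Psi^{-1}$ can be written down geometrically. Given $\gamma$ that is $\Ca^1$-close to $\gamma_0$, I would seek $\phi\in\mathcal{G}(I)$ with $\gamma\circ\phi^{-1}\in S_{\gamma_0}$; this forces $\gamma(s)$ to lie on the normal line to $\gamma_0$ at the parameter $\phi(s)$, i.e.
\[
F\big(s,\phi(s)\big) = 0,\qquad F(s,t) := \big(\gamma(s)-\gamma_0(t)\big)\cdot\gamma_0'(t).
\]
Since $\gamma$ is $\Ca^1$-close to $\gamma_0$ one has $F(s,s)\approx 0$ and $\partial_t F(s,t)\approx -|\gamma_0'(s)|^2\neq 0$, so the finite-dimensional implicit function theorem produces, for each $s$, a unique $t=\phi(s)$ near $s$ depending smoothly on $s$; the resulting $\phi$ is $\Ca^1$-close to the identity, hence lies in $\mathcal{G}(I)$, and I would set $\sigma=\gamma\circ\phi^{-1}$ with normal coordinate $a=(\sigma-\gamma_0)\cdot\nu_0$. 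That $\gamma\mapsto(\sigma,\phi)$ is smooth in the tame Fr\'echet sense would follow from smoothness of this pointwise solution together with smoothness of composition and inversion in the Fr\'echet Lie group $\mathcal{G}(I)$ (Theorem~II.2.3.5 in \cite{Ham82}). To see that $\Psi$ is a local diffeomorphism it suffices to linearize at $(\gamma_0,\mathrm{id})$: by \eqref{Gammaderivative} the derivative sends $(\dot a,\zeta)$ to $\dot a\,\nu_0 + \gamma_0'\,\zeta$, and the pointwise orthogonal splitting $w = (w\cdot\nu_0)\,\nu_0 + (w\cdot T_0)\,T_0$ of any vector field $w$ along $\gamma_0$, together with $\gamma_0'=|\gamma_0'|\,T_0$, shows this is a linear isomorphism with inverse $\dot a = w\cdot\nu_0,\ \zeta = (w\cdot T_0)/|\gamma_0'|$.

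Once $\Psi$ is established as a diffeomorphism onto a saturated neighbourhood, the slices $S_{\gamma_0}$ serve as local sections of $\pi$ and the maps $\Psi^{-1}$ give a bundle atlas, whose transition maps are the smooth reparameterization-valued assignments $\gamma\mapsto\phi$, so the bundle is smooth. Freeness enters exactly here: restricting to $\Cb^f(I)$ makes the orbit map $\phi\mapsto\gamma_0\circ\phi$ injective, so each slice meets each orbit in a single point, $\Psi$ is globally injective on $S_{\gamma_0}\times\mathcal{G}(I)$, and the local trivializations are equivariant for the right $\mathcal{G}(I)$-action, which is what upgrades the fibration to a genuine \emph{principal} bundle with structure group $\mathcal{G}(I)$.

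The main obstacle is genuinely the infinite-dimensional category: in finite dimensions local triviality is immediate from the inverse function theorem, whereas here the whole content is that $\Psi^{-1}$ exists and is smooth, and the device above --- solving a pointwise finite-dimensional implicit equation and then promoting it to a smooth map of curves through the tame structure of $\mathcal{G}(I)$ --- is the technical crux, with the tameness and smoothness estimates carrying the real work. A secondary point that will require separate care is the open-curve case $I=[0,1]$: there every $\phi\in\Diff^+([0,1])$ must fix $0$ and $1$, so the orbit directions $\gamma_0'\,\zeta$ vanish at the endpoints and the tangential endpoint motions must instead be absorbed into the slice (using the freedom to translate the representative); the normal-graph slice therefore has to be adapted near $s=0,1$ to stay compatible with $\phi(0)=0,\ \phi(1)=1$, and one should note that the freeness hypothesis needed for closed curves is automatic here precisely because these diffeomorphisms fix the endpoints.
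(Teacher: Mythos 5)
The paper itself contains no proof of Theorem~\ref{quotient}: it is imported verbatim from \cite{CMM91} (Theorem~1.5 there), so the only meaningful comparison is with the argument of that cited reference. Your sketch is essentially a reconstruction of it: a slice of normal graphs over a fixed free immersion, an explicit geometric inverse to $\Psi(\sigma,\phi)=\sigma\circ\phi$ obtained by solving the pointwise orthogonality equation $F(s,\phi(s))=0$ via the finite-dimensional implicit function theorem, and freeness (Lemma~1.3 of \cite{CMM91}, which the paper also recalls) to make each slice meet each orbit exactly once. For closed curves ($I=\mathbb{S}^1$) this architecture is correct, with two caveats. First, your sentence that ``to see that $\Psi$ is a local diffeomorphism it suffices to linearize'' is false in the Fr\'echet category --- there is no inverse function theorem to invoke, which is exactly why the explicit inverse is needed; consequently the tame smoothness of $\gamma\mapsto(\sigma,\phi)$, which you acknowledge but defer, is not a loose end but the entire technical content of the theorem. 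Second, a Fr\'echet \emph{manifold} structure on the quotient requires the quotient topology to be Hausdorff; slices provide charts, not Hausdorffness, and this needs a separate argument (closedness of the graph of the orbit equivalence relation) that your proposal never mentions.

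The open-curve case $I=[0,1]$ is a genuine gap, not merely a point ``requiring separate care.'' Your own construction shows why: the solution $\phi$ of $F(s,\phi(s))=0$ has no reason to satisfy $\phi(0)=0$ and $\phi(1)=1$, so it need not lie in $\Diff^+([0,1])=\mathcal{G}([0,1])$; equivalently, the linearized map $(\dot a,\zeta)\mapsto \dot a\,\nu_0+\gamma_0'\,\zeta$ is not surjective once $\zeta$ is constrained to vanish at the endpoints, since tangential vectors at $s=0,1$ are missed. The repair you suggest --- absorbing endpoint tangential motion by translating the representative --- fails in general: writing $w-c=\dot a\,\nu_0+\gamma_0'\,\zeta$ with $\zeta(0)=\zeta(1)=0$ forces $c\cdot T_0(0)=w(0)\cdot T_0(0)$ and $c\cdot T_0(1)=w(1)\cdot T_0(1)$, two conditions on the single constant $c\in\mathbb{R}^2$ that are incompatible for generic $w$ whenever $T_0(0)$ and $T_0(1)$ are parallel (a straight segment or a U-shaped curve). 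The correct fix is to enlarge the slice to a genuine complement of the orbit tangent space, e.g.\ by adjoining tangential motions supported near the endpoints, rather than using the pointwise normal complement. This matters for the statement under review precisely because the cited Theorem~1.5 of \cite{CMM91} concerns immersions of a compact manifold without boundary, so the open-curve half of Theorem~\ref{quotient} is the part not literally covered by the citation --- and it is the part where your slice, as written, breaks down.
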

\begin{remark}{\rm Since $\mathcal{G}(I)$ stabilizes the submanifold $\Cb_1(I)$ of length-one curves, the quotient $\Cb_1^{f}(I)/\mathcal{G}(I)$ inherits a Fr\'echet manifold structure such that $\Cb_1^{f}(I)/\mathcal{G}(I)$ is a submanifold of $\Cb^{f}(I)/\mathcal{G}(I)$. See also \cite{TDiez} for a new slice theorem in the context of tame Fr\'echet group actions.
}
\end{remark}

\subsubsection{Orbits under the group of reparameterizations}
The orbit of $\gamma\in\Cb_1(I)$ with respect to the action by reparameterization will be denoted by  $$\mathcal{O} = \{\gamma\circ \phi \, \vert\, \phi \in \mathcal{G}(I)\}.$$
The tangent space to the orbit $\mathcal{O}$ at $\gamma\in \Cb_1(I)$ is the space of tangent vector fields along $\gamma$ (preserving the start and end points when the curve is open), i.e., the space of vector fields which are, for each value of the parameter $s\in I$, collinear to the unit tangent vector $\operatorname{v}(s) = \frac{\gamma'(s)}{\lvert \gamma'(s)\rvert}$. Such a vector field can be written $w(s) = m(s) \operatorname{v}(s)$, where $m$  is a real function corresponding to the magnitude of $w$ and such that:
\begin{itemize}
\item $m\in\mathcal{C}^{\infty}([0,1], \mathbb{R})$ satisfies $m(0) = 0$ and $m(1)=0$ for open curves,
\item  $m \in \mathcal{C}^{\infty}(\mathbb{S}^1, \mathbb{R})$ for closed curves, in particular $m(0) = m(1)$ and $m'(0) = m'(1)$.
\end{itemize}

\subsubsection{Projection on the space of arc-length parameterized curves}  Any smooth curve in the plane admits a unique reparameterization by its arc-length. This property singles out a preferred parameterized curve in the orbit of a given parameterized curve under the group of reparameterizations.
\begin{theorem}\label{diffeoCA}
Given a curve $\gamma\in \Cb_1(I)$, let $p(\gamma) \in \mathcal{A}_1(I)$ denote its arc-length-reparameterization, so that
$p(\gamma) = \gamma\circ\psi$ where
\begin{equation}\label{reparamdiff}
\psi'(s) = \frac{1}{\lvert \gamma'\big(\psi(s)\big)\rvert}, \qquad \psi(0)=0.
\end{equation}
Then $p$ is a smooth retraction of $\Cb_1(I)$ onto $\mathcal{A}_1(I)$.
\end{theorem}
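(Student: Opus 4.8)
The plan is to verify the three defining properties of a retraction: that $p$ maps $\Cb_1(I)$ into $\mathcal{A}_1(I)$, that $p$ restricts to the identity on $\mathcal{A}_1(I)$, and that $p$ is smooth. The first two are elementary once $\psi$ is described explicitly; smoothness is where the real work lies.

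First, rather than solving the ODE \eqref{reparamdiff} directly, I would exhibit $\psi$ as the inverse of an explicit diffeomorphism. Define the arc-length function $\ell_\gamma(t) = \int_0^t \lvert \gamma'(u)\rvert\,du$. Since $\gamma$ is an immersion, $\ell_\gamma' = \lvert\gamma'\rvert > 0$, so $\ell_\gamma$ is strictly increasing and smooth; the length-one condition \eqref{K1def} gives $\ell_\gamma(1) = 1$, and for closed curves periodicity of $\gamma'$ gives $\ell_\gamma(t+1) = \ell_\gamma(t) + 1$, so in either case $\ell_\gamma \in \mathcal{G}(I)$. Differentiating $\ell_\gamma(\psi(s)) = s$ recovers exactly \eqref{reparamdiff} together with $\psi(0) = 0$, so $\psi = \ell_\gamma^{-1}$. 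Then $(\gamma\circ\psi)'(s) = \gamma'(\psi(s))\psi'(s)$ has norm $\lvert\gamma'(\psi(s))\rvert\,\psi'(s) = 1$, whence $p(\gamma)\in\mathcal{A}_1(I)$; and if $\gamma\in\mathcal{A}_1(I)$ then $\lvert\gamma'\rvert\equiv 1$ forces $\ell_\gamma=\mathrm{id}$, hence $\psi=\mathrm{id}$ and $p(\gamma)=\gamma$. This settles the first two properties.

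For smoothness I would write $p = \Gamma\circ(\mathrm{id},\Psi)$, where $\Gamma$ is the smooth and tame composition action of the previous proposition and $\Psi\colon\Cb_1(I)\to\mathcal{G}(I)$ is the map $\gamma\mapsto\ell_\gamma^{-1}$. It then suffices to prove $\Psi$ smooth, and I would factor $\Psi = \mathrm{inv}\circ L$, where $L\colon\gamma\mapsto\ell_\gamma$ and $\mathrm{inv}$ is group inversion in $\mathcal{G}(I)$. The map $L$ is the composite of the tame linear map $\gamma\mapsto\gamma'$, the pointwise-norm superposition operator $\gamma'\mapsto\lvert\gamma'\rvert$, and the tame linear integration operator $h\mapsto\int_0^{\cdot}h$; the first and third are smooth as continuous linear maps, and the middle is smooth because $z\mapsto\lvert z\rvert$ is smooth on $\mathbb{R}^2\setminus\{0\}$ and the immersion condition keeps $\gamma'$ inside that open set, so Hamilton's results on superposition operators apply. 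Since $\mathcal{G}(I)$ is a tame Fr\'echet Lie group (Theorem~II.2.3.5 in \cite{Ham82}), $\mathrm{inv}$ is smooth, whence $\Psi$, and therefore $p$, is smooth as a map into $\Cb(I)$. Because $\mathcal{A}_1(I)$ is a submanifold and the image of $p$ lies in it, the universal mapping property of submanifolds upgrades this to smoothness of $p$ into $\mathcal{A}_1(I)$.

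The main obstacle I anticipate is the smoothness of the inversion factor $\mathrm{inv}$ and, relatedly, checking that $L$ genuinely corestricts to a smooth map into the open subset $\mathcal{G}(I)$ rather than merely into $\mathcal{C}^\infty(I,I)$: the former is precisely the content of the tame Lie group structure and must be invoked rather than recomputed, while the latter requires the endpoint and periodicity checks above to place $\ell_\gamma$ in $\mathcal{G}(I)$. The pointwise-norm operator also deserves attention, since its smoothness rests essentially on staying away from the zero vector, which is exactly where the restriction to immersions is used.
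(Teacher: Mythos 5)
Your proof is correct, and while the first half mirrors the paper, the smoothness argument takes a genuinely different route. The elementary parts coincide in substance: your arc-length function $\ell_\gamma$ is exactly the inverse $\xi$ that the paper introduces (with $\xi'(t)=\lvert\gamma'(t)\rvert$, $\xi(0)=0$) to verify $\psi(1)=1$, and your checks that $p(\gamma)\in\mathcal{A}_1(I)$ and that $p$ is the identity on $\mathcal{A}_1(I)$ are the same computations, just organized around $\ell_\gamma$ from the start rather than around the ODE \eqref{reparamdiff}. Where you diverge is the key analytic step: the paper obtains smoothness of $\gamma\mapsto\psi$ by appealing to smooth dependence of ODE solutions on parameters (an assertion that requires some care in the Fr\'echet category, where the classical Banach-space ODE theory does not apply verbatim), whereas you bypass ODE theory entirely by factoring $\gamma\mapsto\psi$ as $\mathrm{inv}\circ L$, with $L$ built from the tame linear operators of differentiation and integration plus the pointwise-norm superposition operator on nonvanishing functions, and with $\mathrm{inv}$ the inversion of the tame Fr\'echet Lie group $\mathcal{G}(I)$ (Theorem~II.2.3.5 of \cite{Ham82}, which the paper already cites for the group structure). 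This buys a more verifiable argument: every ingredient is either a continuous linear map or a cited theorem, and it makes explicit the point --- easy to overlook in the paper's version --- that inversion in $\mathcal{G}(I)$ is smooth precisely because one works in the tame smooth category (in the Banach $\mathcal{C}^k$ or Sobolev settings inversion is merely continuous, and the argument would break). It also forces the endpoint and periodicity checks that justify $\ell_\gamma\in\mathcal{G}(I)$, which the paper folds into the remark about $\xi(1)=1$. The cost is length; the paper's proof is shorter because it treats the parameter-dependence of the ODE solution as known.
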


\begin{proof}
The definition of $\psi$ comes from the requirement that
$\lvert (\gamma\circ\psi)'(s)\rvert = 1$, which translates into $\lvert \gamma'(\psi(s))\rvert \psi'(s)=1$.
The additional requirement $\psi(0)=0$ gives a unique solution. It is not obvious from here
that $\psi(1)=1$, but this is easier to see if we let $\xi$ be its inverse; then $\xi'(t) = \lvert \gamma'(t)\rvert$, and since $\gamma$
has length one and $\xi(0)=0$ we know $\xi(1)=1$; thus also $\psi(1)=1$. The image of this map is of course in $\mathcal{A}_1(I)$.
Smoothness follows from the fact that $\psi$ depends smoothly on parameters as the solution of an ordinary differential equation,
together with smoothness of the right action $\Gamma(\gamma,\psi)=\gamma\circ\psi$.
The fact that $p$ is a retraction follows from the obvious fact that if $\lvert \gamma'(s)\rvert \equiv 1$, then the unique solution of
\eqref{reparamdiff} is $\phi(s)=s$, so that $p\vert_{\mathcal{A}_1(I)}$ is the identity.
\end{proof}

\subsubsection{Identification of the quotient space with the space of arc-length parameterized curves}
The identification of the quotient space $\Cb_1([0, 1])/\mathcal{G}([0, 1])$ with the space $\mathcal{A}_1([0, 1])$ of arc-length parameterized curves relies on the fact that given a parameterized curve there is a \emph{unique} diffeomorphism fixing the start and endpoints which maps it to an arc-length parameterized curve.

\begin{theorem}\label{diffeoquotient}
The Fr\'echet manifold $\mathcal{A}_1([0,1])$ is diffeomorphic to the quotient Fr\'echet manifold $\Cb_1([0,1])/\mathcal{G}([0,1])$.
\end{theorem}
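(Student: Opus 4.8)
The plan is to show that the arc-length retraction $p$ of Theorem~\ref{diffeoCA}, after passing to the quotient, is a two-sided inverse to the projection $\pi$ restricted to $\mathcal{A}_1([0,1])$, and then to verify that both maps are smooth. The first thing to pin down is the ambient structure: for open curves every immersion is free, as recalled just before Theorem~\ref{quotient}, because any $\psi\in\mathcal{G}([0,1])$ fixes $0$ and $1$ and Lemma~1.3 of \cite{CMM91} forces the only stabilizing reparameterization to be the identity. Hence $\Cb_1^f([0,1])=\Cb_1([0,1])$, and the Remark following Theorem~\ref{quotient} equips $\Cb_1([0,1])/\mathcal{G}([0,1])$ with a Fr\'echet manifold structure for which $\pi\colon\Cb_1([0,1])\to\Cb_1([0,1])/\mathcal{G}([0,1])$ is a smooth principal bundle, in particular locally trivial.

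First I would check that $p$ is constant on $\mathcal{G}([0,1])$-orbits. If $\gamma_2=\gamma_1\circ\phi$ with $\phi\in\mathcal{G}([0,1])$, then $p(\gamma_1)$ and $p(\gamma_2)$ are both arc-length parameterized curves lying in the single orbit of $\gamma_1$ (indeed $p(\gamma_2)=\gamma_2\circ\psi_2=\gamma_1\circ(\phi\circ\psi_2)$), so by the uniqueness of the arc-length representative fixing the base point---exactly the fact recalled immediately before this theorem---they coincide. Thus $p$ descends to a well-defined map $\bar p\colon\Cb_1([0,1])/\mathcal{G}([0,1])\to\mathcal{A}_1([0,1])$ satisfying $\bar p\circ\pi=p$. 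To see that $\bar p$ and $\pi|_{\mathcal{A}_1([0,1])}$ are mutually inverse, I would use the retraction property $p|_{\mathcal{A}_1([0,1])}=\mathrm{id}$: for $\gamma\in\mathcal{A}_1([0,1])$ this gives $\bar p(\pi(\gamma))=p(\gamma)=\gamma$, while for any orbit $[\gamma]$ the curve $p(\gamma)=\gamma\circ\psi$ again lies in $[\gamma]$, so $\pi(\bar p([\gamma]))=[\gamma]$. Hence the two maps are inverse bijections.

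It then remains to establish smoothness in both directions. The map $\pi|_{\mathcal{A}_1([0,1])}$ is smooth as the composition of the smooth submanifold inclusion $\mathcal{A}_1([0,1])\hookrightarrow\Cb_1([0,1])$ of Theorem~\ref{arcspace} with the smooth projection $\pi$. For $\bar p$ I would exploit local triviality: around each point of the quotient there is a smooth local section $\sigma$ of $\pi$, and on that neighborhood $\bar p=\bar p\circ\pi\circ\sigma=p\circ\sigma$, a composition of smooth maps, with $p$ smooth by Theorem~\ref{diffeoCA}. Since smoothness is local, $\bar p$ is smooth, so $\bar p$ is a diffeomorphism with inverse $\pi|_{\mathcal{A}_1([0,1])}$, as claimed.

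The hard part will be the smoothness of $\bar p$: in the tame Fr\'echet setting one cannot appeal to a finite-dimensional submersion theorem, and the descent of the smooth map $p$ through $\pi$ genuinely relies on the existence of smooth local sections, which is precisely what the principal bundle structure of Theorem~\ref{quotient} guarantees. One should also note that the whole argument is special to the open-curve case $I=[0,1]$, where freeness of every immersion is automatic and the arc-length representative in each orbit is unique; this is what makes $\bar p$ well defined in the first place.
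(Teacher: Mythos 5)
Your proposal is correct and follows essentially the same route as the paper: descend the retraction $p$ of Theorem~\ref{diffeoCA} to a map on the quotient, and check that $\pi\circ\iota$ is its two-sided inverse using the retraction property and the uniqueness of the arc-length representative in each orbit. The only difference is one of detail: the paper asserts without comment that the descended map is smooth, whereas you justify this explicitly via smooth local sections of the principal bundle structure from Theorem~\ref{quotient}, which is exactly the standard argument underlying the paper's claim.
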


\begin{proof}
Since $p(\gamma\circ\psi) = p(\gamma)$ for any reparameterization $\psi\in\ \mathcal{G}([0, 1])$,
we get a smooth map $$\widetilde{p}\colon \Cb_1([0,1])/\mathcal{G}([0,1]) \to \mathcal{A}_1([0,1]),$$ which is clearly a bijection, and its inverse is
$\pi\circ\iota$ where $\pi$ is the quotient projection and $\iota$ is the smooth inclusion of $\mathcal{A}_1([0,1])$ into $\Cb_1([0,1])$.
\end{proof}
For closed curves, the subgroup $\mathbb{S}^1$ of $\mathcal{G}(\mathbb{S}^1)$ acts on a closed curve $\gamma$ by translating the base point along the curve: $\gamma(s)\mapsto\gamma(s+\tau)$ for $\tau\in \mathbb{S}^1$.
One has the following commutative diagram, where the vertical lines are the canonical projections on the quotients spaces.
$$
\begin{array}{cccc}
p\colon & \Cb_1(\mathbb{S}^1) & \longrightarrow & \mathcal{A}_1(\mathbb{S}^1)\\
 & \downarrow & & \downarrow\\
 & \Cb_1(\mathbb{S}^1)/\mathcal{G}(\mathbb{S}^1) &\longrightarrow  &  \mathcal{A}_1(\mathbb{S}^1)/\mathbb{S}^1
\end{array}
$$

\begin{figure}[!ht]
 		\centering
				\includegraphics[width = 12cm]{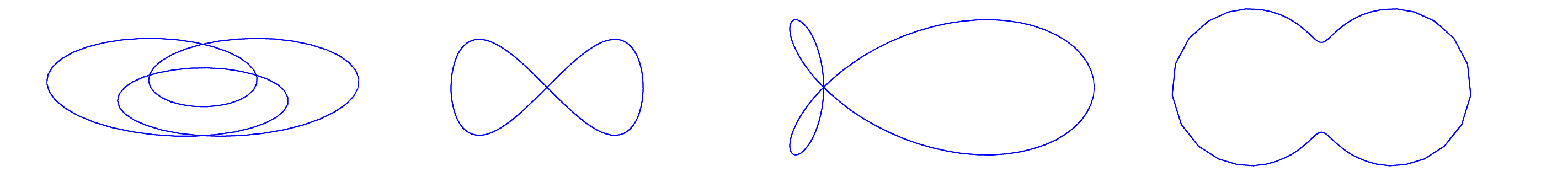}
 		\caption{Some parameterized closed immersions $\gamma$  in the plane. 
        }
 		\label{curves}		
\end{figure}

\section{Quotient elastic metrics on smooth arc-length parameterized plane curves}\label{smooth}

\subsection{Definition of the elastic metrics}\label{elastic_metric}
For $I = [0, 1]$ or $I = \mathbb{S}^1=\mathbb{R}/\mathbb{Z}$,
we will consider the following $2$-parameter family of metrics on the space $\Cb_1(I)$ of plane curves:
  \begin{equation}\label{elastic_metric_abc}
  \begin{array}{l}
  G^{a,b}(w,w) =
 \int_0^1 \left(a \left(D_s w\cdot \operatorname{v}\right)^2 + b \left(D_s w, \operatorname{n}\right)^2  \right) \lvert\gamma'(t)\rvert \, dt,
  \end{array}
  \end{equation}
where  $a$ and $b$ are positive constants, $\gamma$ is any parameterized curve in $\Cb_1(I)$, $w$ is any element of the tangent space $T_\gamma\Cb_1(I)$, with $D_s w = \frac{w'}{\lvert\gamma'\rvert}$ denoting the arc-length derivative of $w$, $\operatorname{v}=\gamma'/\lvert \gamma'\rvert$ and $\operatorname{n}=\operatorname{v}^{\perp}$. These metrics  have been introduced in
\cite{MioAnuj},  and are now called  elastic  metrics. They have been also studied in~\cite{Bauer} with another convention for the coefficients ($a$ in~\cite{MioAnuj} equals $b^2$ in~\cite{Bauer}, and $b$ in~\cite{MioAnuj} equals  $a^2$ in~\cite{Bauer}).
For $w_1$ and $w_2$ two tangent vectors at $\gamma\in \Cb_1(I)$, the corresponding inner product reads:
\begin{equation}
  \begin{array}{l}
\!\!\!  G^{a,b}\!(w_1,w_2) \!= \!\!
 \int_0^1 \!\Big(a\left(D_s w_1 \cdot \operatorname{v}\right)\!\left( D_s w_2\cdot  \operatorname{v}\right) \!+\! b \left( D_s w_1\cdot \operatorname{n}\right)\!\left(D_s w_2\cdot \operatorname{n}\right)\Big)\lvert\gamma'(t)\rvert\,dt.
  \end{array}
  \end{equation}

 The metric $G^{a,b}$ is
  invariant with respect to the action of the reparameterization group $\mathcal{G}(I)$ on $\Cb_1(I)$ and therefore it defines a metric on the quotient space $\Cb^f_1(I)/\mathcal{G}(I)$, which we will refer to as the \textit{quotient elastic metric}.

  \subsection{Horizontal space for the elastic metrics}
  Let us now consider an initial curve $\gamma$ located on the submanifold $\mathcal{A}_1(I)$ of curves parameterized by arc-length and of length 1. Recall that in this case, one has $\lvert\gamma'(s)\rvert =1$ and $D_s = \frac{d}{ds}$.
Any tangent vector  $u\in T_{\gamma}\mathcal{O}$ at $\gamma\in\mathcal{A}_1(I)$ can be written as $u(t) = m(t) \operatorname{v}(t)$ where $m\in\mathcal{C}^{\infty}([0, 1], \mathbb{R})$ satisfies $m(0) = 0$ and $m(1)=0$ for open curves and $m\in \mathcal{C}^{\infty}(\mathbb{S}^1, \mathbb{R})$ for closed curves.
The orthogonal space to $T_\gamma\mathcal{O}$ for the elastic metric $G^{a, b}$ on $\Cb_1(I)$ is called the \textit{horizontal space} at $\gamma$.
 \begin{proposition}
 The horizontal space $\textrm{Hor}$ at $\gamma\in\mathcal{A}_1(I)$ is
 \begin{equation}
\begin{array}{l}
 \textrm{Hor}_{\gamma} = \left\{w \in  T_{\gamma}\Cb_1(I),
 \left(w'\cdot \operatorname{v}\right)' =  \frac{b}{a} \kappa \left( w'\cdot \operatorname{n}\right)\right\}.
\end{array}
\end{equation}
 \end{proposition}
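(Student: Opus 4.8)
The plan is to characterize $\textrm{Hor}_\gamma$ directly from its definition as the $G^{a,b}$-orthogonal complement of the orbit tangent space $T_\gamma\mathcal{O}$, and to convert the resulting integral orthogonality condition into the pointwise differential identity by integration by parts. First I would use that for $\gamma\in\mathcal{A}_1(I)$ one has $\lvert\gamma'(s)\rvert\equiv 1$, so that $D_s=\frac{d}{ds}$, $\operatorname{v}=\gamma'$, and the Frenet relations $\operatorname{v}'=\kappa\operatorname{n}$ and $\operatorname{n}'=-\kappa\operatorname{v}$ hold, where $\kappa$ denotes the signed curvature. With this normalization the metric simplifies to $G^{a,b}(w_1,w_2)=\int_0^1\big(a(w_1'\cdot\operatorname{v})(w_2'\cdot\operatorname{v})+b(w_1'\cdot\operatorname{n})(w_2'\cdot\operatorname{n})\big)\,ds$, and by definition a vector $w\in T_\gamma\Cb_1(I)$ lies in $\textrm{Hor}_\gamma$ precisely when $G^{a,b}(w,u)=0$ for every $u\in T_\gamma\mathcal{O}$.

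Next I would insert the description of the orbit tangent space recalled above: every $u\in T_\gamma\mathcal{O}$ is of the form $u=m\operatorname{v}$, with $m\in\mathcal{C}^\infty([0,1],\mathbb{R})$ satisfying $m(0)=m(1)=0$ in the open case, and $m\in\mathcal{C}^\infty(\mathbb{S}^1,\mathbb{R})$ in the closed case. Differentiating and using the Frenet relations gives $u'=m'\operatorname{v}+m\kappa\operatorname{n}$, hence $u'\cdot\operatorname{v}=m'$ and $u'\cdot\operatorname{n}=m\kappa$. Substituting, the horizontality condition becomes $\int_0^1\big(a(w'\cdot\operatorname{v})\,m'+b\kappa(w'\cdot\operatorname{n})\,m\big)\,ds=0$ for all admissible $m$.

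I would then integrate the first term by parts, writing $\int_0^1 a(w'\cdot\operatorname{v})\,m'\,ds=a\big[(w'\cdot\operatorname{v})m\big]_0^1-\int_0^1 a(w'\cdot\operatorname{v})'\,m\,ds$. The boundary term vanishes in both settings: for open curves because $m(0)=m(1)=0$, and for closed curves by the periodicity of $m$ and of $w'\cdot\operatorname{v}$. This leaves the single integral condition $\int_0^1\big(-a(w'\cdot\operatorname{v})'+b\kappa(w'\cdot\operatorname{n})\big)m\,ds=0$, required to hold for all admissible $m$. Applying the fundamental lemma of the calculus of variations — using that the magnitudes $m$ are dense enough (smooth and vanishing at the endpoints in the open case, all smooth periodic functions in the closed case) and that the bracketed factor is continuous — forces the factor to vanish identically, yielding $(w'\cdot\operatorname{v})'=\frac{b}{a}\kappa(w'\cdot\operatorname{n})$, which is exactly the claimed description of $\textrm{Hor}_\gamma$.

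The algebra is routine, so the two points that genuinely require care are the vanishing of the boundary contributions — which is where the two distinct conventions for $m$ (open versus closed) enter and must each be checked — and the legitimacy of passing from the integral identity to a pointwise one via the correct version of the fundamental lemma in each setting. This last step is precisely the delicate point whose mishandling produced the erroneous horizontal space in Theorem~6.4 of \cite{Preston}, so I would take particular care to verify that the admissible test functions $m$ really do span the orbit directions and nothing more.
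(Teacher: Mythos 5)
Your proposal is correct and follows essentially the same route as the paper's proof: write $u=m\operatorname{v}$, compute $u'\cdot\operatorname{v}=m'$ and $u'\cdot\operatorname{n}=m\kappa$, integrate the $a$-term by parts, and invoke density of the admissible $m$ (the paper cites density in $L^2$, you cite the fundamental lemma of the calculus of variations, which is the same mechanism). Your explicit verification of the boundary terms in both the open and closed cases is a minor elaboration the paper leaves implicit, but it is not a different argument.
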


 \begin{proof}
 Let $u = m\operatorname{v}\in T_{\gamma}\mathcal{O}$. One has:
 \begin{equation*}
 \begin{array}{lr}
u'\cdot \operatorname{v}= m'(s),&\quad
 u'\cdot \operatorname{n} = m(s) \kappa(s). \end{array}
 \end{equation*}
The horizontal space at $\gamma$ consists of vector fields $w\in T_{\gamma}\Cb_1(I)$  such that for any function $m\in \mathcal{C}^{\infty}(I,\mathbb{R})$ (with $m(0) =m(1) = 0$ for open curves), the following quantity vanishes:
 \begin{equation*}
  \begin{array}{l}
 0 = G^{a,b}(w, m \operatorname{v}) = \int_0^1 \left(a m'(s)\left(w'(s)\cdot \operatorname{v}(s)\right) + b m(s) \kappa(s) \left( w'(s)\cdot \operatorname{n}(s)\right) \right) ds.
  \end{array}
  \end{equation*}
 After integrating the first term by parts, one obtains the following condition on $w$, which has to be satisfied for any real function $m\in\mathcal{C}^{\infty}(I,\mathbb{R})$ (with $m(0) = 0$ and $m(1)=0$ for open curves):
 \begin{equation*}
  \begin{array}{l}
0 =  \int_0^1 m\left(-a \left(w'\cdot \operatorname{v}\right)' + b \kappa \left( w'\cdot \operatorname{n}\right) \right) ds.
  \end{array}
\end{equation*}

 Using the density of such functions $m$ in $L^2(I, \mathbb{R})$, this implies that the equation defining the horizontal space of the elastic metric at $\gamma$ is
\begin{equation}\label{eqhorizontal}
\left(w'\cdot \operatorname{v}\right)' =  \frac{b}{a} \kappa \left( w'\cdot \operatorname{n}\right).
\end{equation}
\end{proof}

\vspace*{-8pt}
\subsection{Quotient elastic metrics}\label{sectionprojection}
Since the reparameterization group preserves the elastic metric $G^{a,b}$ defined above, it defines a quotient elastic metric on the quotient space $\Cb_1([0, 1])/\mathcal{G}([0, 1])$, which we will denote by $\overline{G}^{a,b}$. By Theorem~\ref{diffeoquotient}, this quotient space is identified with the submanifold $\mathcal{A}_1([0, 1])$, and we can pull back the quotient elastic metric $\overline{G}^{a,b}$ on $\mathcal{A}_1([0, 1])$.
We will denote the corresponding metric on $\mathcal{A}_1([0, 1])$ by $\widetilde{G}^{a,b}$. The value of the metric $\widetilde{G}^{a,b}$  on a tangent vector $w\in T_{\gamma}\mathcal{A}_1([0, 1])$ is the value of $\overline{G}^{a,b}([w], [w])$, where $[w]$ denotes the equivalence class of $w$ in the quotient space $T_{\gamma}\Cb_1([0, 1])/T_{\gamma}\mathcal{O}$. By definition of the quotient metric,
$$
\overline{G}^{a,b}([w], [w])= \inf_{u \in T_{\gamma}\mathcal{O}} G^{a,b}(w+u, w+u)
$$
where $u$ ranges over all tangent vectors in $ T_{\gamma}\mathcal{O}$. If $T_\gamma\Cb_1([0, 1])$ decomposes as $T_\gamma\Cb_1([0, 1]) = T_\gamma\mathcal{O}\oplus Hor_{\gamma}$, this minimum is achieved by the unique vector $P_h(w)\in [w]$ belonging to the horizontal space $ \textrm{Hor}_{\gamma}$ at $\gamma$.
In this case:
\begin{equation}\label{quotient_metric}
\widetilde{G}^{a,b}(w,w) = G^{a,b}(P_{h}(w), P_{h}(w)),
\end{equation}
where $P_{h}(w)\in T_{\gamma}\Cb_1([0, 1])$ is the projection of $w$ onto the horizontal space,  i.e., is the unique  \textit{horizontal} vector such that $w= P_{h}(w) + u$ with $u\in T_{\gamma}\mathcal{O}$.

\begin{proposition}
Let $w$ be a tangent vector to the manifold $\mathcal{A}_1([0, 1])$ at $\gamma$ and write $w' = \Phi \operatorname{n},$ where $\Phi$ is a real function in $\mathcal{C}^{\infty}([0, 1], \mathbb{R})$.
Then the projection $P_h(w)$ of $w\in T_{\gamma}\mathcal{A}_1([0, 1])$ onto the horizontal space $ \textrm{Hor}_{\gamma} $ reads $P_h(w) = w -m\operatorname{v}$ where $m\in \mathcal{C}^{\infty}([0, 1], \mathbb{R})$ is the unique solution of
\begin{equation}\label{horizontal_m}
-\frac{a}{b} m'' +  \kappa^2 m =  \kappa \Phi, \qquad m(0)=0, \quad m(1)=0.
\end{equation}
\end{proposition}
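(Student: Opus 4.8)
The plan is to produce the horizontal representative of $[w]$ by correcting $w$ along the orbit direction. Every element of $[w]$ in $T_\gamma\Cb_1([0,1])$ has the form $w - u$ with $u = m\operatorname{v}\in T_{\gamma}\mathcal{O}$, so I would set $W := w - m\operatorname{v}$ and determine the scalar function $m$ by demanding that $W$ satisfy the defining equation \eqref{eqhorizontal} of the horizontal space. Because $u = m\operatorname{v}$ must be a genuine orbit direction, $m$ is automatically subject to the boundary conditions $m(0)=m(1)=0$; the assertion of the proposition is then that horizontality together with these boundary conditions pins $m$ down uniquely as the solution of \eqref{horizontal_m}.

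First I would compute $W'$ on $\mathcal{A}_1([0,1])$ using the Frenet relations $\operatorname{v}'=\kappa\operatorname{n}$ and $\operatorname{n}'=-\kappa\operatorname{v}$ (the same relations already used to obtain $u'\cdot\operatorname{v}=m'$ and $u'\cdot\operatorname{n}=m\kappa$ in the horizontal-space computation). Writing $w'=\Phi\operatorname{n}$, this gives
\begin{equation*}
W' = -m'\operatorname{v} + (\Phi - \kappa m)\operatorname{n},
\end{equation*}
hence $W'\cdot\operatorname{v}=-m'$ and $W'\cdot\operatorname{n}=\Phi-\kappa m$. Substituting into \eqref{eqhorizontal}, namely $(W'\cdot\operatorname{v})'=\tfrac{b}{a}\kappa\,(W'\cdot\operatorname{n})$, yields $-m''=\tfrac{b}{a}\kappa(\Phi-\kappa m)$, which after multiplying by $a/b$ is exactly the equation $-\tfrac{a}{b}m''+\kappa^2 m=\kappa\Phi$ of \eqref{horizontal_m}. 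So the differential computation is purely mechanical once the Frenet frame is in hand.

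The only real obstacle is well-posedness of the two-point boundary value problem \eqref{horizontal_m}, and this is where I would spend the effort. I would view $Lm:=-\tfrac{a}{b}m''+\kappa^2 m$ as a Sturm--Liouville operator with Dirichlet data. Its associated quadratic form $\int_0^1\big(\tfrac{a}{b}(m')^2+\kappa^2 m^2\big)\,ds$, obtained by integrating $\int_0^1 m\,(Lm)\,ds$ by parts and using $m(0)=m(1)=0$, is nonnegative; since $a/b>0$, it vanishes only when $m'\equiv 0$, and then $m(0)=0$ forces $m\equiv 0$. Thus the homogeneous Dirichlet problem has only the trivial solution, so by the Fredholm alternative (equivalently, since $0$ is not a Dirichlet eigenvalue) the inhomogeneous problem has a unique solution for the smooth right-hand side $\kappa\Phi$, and ODE regularity upgrades it to $m\in\mathcal{C}^\infty([0,1],\mathbb{R})$.

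Finally I would close the argument by noting that $W=w-m\operatorname{v}$ is horizontal by construction and differs from $w$ by the orbit vector $m\operatorname{v}\in T_\gamma\mathcal{O}$, so it is the horizontal projection $P_h(w)$. The existence and uniqueness of $m$ simultaneously justify the direct-sum splitting $T_\gamma\Cb_1([0,1])=T_\gamma\mathcal{O}\oplus\textrm{Hor}_{\gamma}$ invoked in passing to \eqref{quotient_metric}, so no separate verification of that decomposition is needed.
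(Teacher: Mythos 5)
Your proposal is correct and follows essentially the same route as the paper: write $P_h(w)=w-m\operatorname{v}$, compute $P_h(w)'\cdot\operatorname{v}=-m'$ and $P_h(w)'\cdot\operatorname{n}=\Phi-\kappa m$ via the Frenet relations, and substitute into the horizontality condition \eqref{eqhorizontal} to obtain the Sturm--Liouville problem \eqref{horizontal_m}. The only difference is in the well-posedness step, where the paper cites the Lax--Milgram theorem while you use the energy identity plus the Fredholm alternative and ODE regularity --- an equally valid and slightly more self-contained justification resting on the same positivity of the quadratic form (though note your closing remark slightly overreaches: your uniqueness argument as written covers only $w$ with $w'\cdot\operatorname{v}=0$, so the full splitting $T_\gamma\Cb_1([0,1])=T_\gamma\mathcal{O}\oplus\textrm{Hor}_{\gamma}$ needs the same argument repeated with the extra source term coming from $w'\cdot\operatorname{v}$).
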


 \begin{proof}
Recall that a tangent vector $w$ to the manifold $\mathcal{A}_1([0, 1])$ at $\gamma$ satisfies $ w'\cdot \operatorname{v} = 0$, where $\operatorname{v}$ is the unit tangent vector field of the curve $\gamma$. Hence, for any $w \in T_{\gamma}\mathcal{A}_1([0, 1])$, the derivative $w'$ of $w$ with respect to the arc-length parameter reads
$
w' = \Phi \operatorname{n},
$
where $\Phi$ is a real function in $\mathcal{C}^{\infty}([0, 1], \mathbb{R})$.
One has
\begin{equation}\label{phm}
P_h(w)' = \Phi \operatorname{n} - m' \operatorname{v} - m \kappa \operatorname{n},
\end{equation}
hence $P_h(w)'\cdot \operatorname{v} = -m'$ and $P_h(w)'\cdot \operatorname{n} = (\Phi - m\kappa).$
The condition \eqref{eqhorizontal} for $P_h(w)$ to be horizontal is therefore \eqref{horizontal_m}.
Equation~\eqref{horizontal_m} is a particular case of  Sturm-Liouville equation $-(pm')' + q m = f$ with homogeneous boundary condition $m(0) = 0$ and $m(1) = 0$. Here $p = \frac{a}{b}>0$ and $q = \kappa^2\geq 0$.
The fact that equation \eqref{horizontal_m} has a unique solution follows from Lax-Milgram Theorem (see section 8.4 in \cite{Brezis}).
\end{proof}

For closed curves,  the tangent space to $\mathcal{A}_1^f(\mathbb{S}^1)$ at $\gamma$ contains the vector space of vector fields of the form $c\operatorname{v}$ where $c$ is a constant and $\operatorname{v} = \gamma'$. These vector fields generate the translation of base point, which is the natural action of the subgroup $\mathbb{S}^1$ of $\mathcal{G}(\mathbb{S}^1)$. One has  $$T_\gamma\mathcal{A}_1^f(\mathbb{S}^1)\cap T_\gamma\mathcal{O} = T_{\gamma}\left(\mathbb{S}^1\cdot\gamma\right),$$
where $\mathbb{S}^1\cdot \gamma =  \{s\mapsto \gamma(s+\tau), \tau \in \mathbb{S}^1\}$.
Therefore one can consider the horizontal projection $P_h\colon T_{[\gamma]}\mathcal{A}_1^f(\mathbb{S}^1)/\mathbb{S}^1 \rightarrow  \textrm{Hor}_{\gamma}$, where $[\gamma]$ denotes the projection of $\gamma$ on the quotient space $\mathcal{A}_1(\mathbb{S}^1)/\mathbb{S}^1$. We will denote by $[w]$ the projection of $w\in T_\gamma\mathcal{A}_1(\mathbb{S}^1)$ on the tangent space $T_{[\gamma]}\mathcal{A}_1(\mathbb{S}^1)/\mathbb{S}^1$.  Note that $[w] =\{ w + c\operatorname{v}, c\in\mathbb{R}\}$ and that $\int_0^1 w'(s) ds = 0$.

\begin{proposition}
Let $w$ be a tangent vector to the manifold $\mathcal{A}_1(\mathbb{S}^1)$ at $\gamma$ and write $w' = \Phi \operatorname{n},$ where $\Phi$ is a real function in $\mathcal{C}^{\infty}(\mathbb{S}^1, \mathbb{R})$ such that $\int_0^1\Phi(s)\operatorname{n}(s)\,ds = 0$.
Then the horizontal projection  $P_h([w])$ of $[w]$ onto the horizontal space reads $P_h([w]) = [w -m\operatorname{v}]$ where $m\in \mathcal{C}^{\infty}(\mathbb{S}^1, \mathbb{R})$ is the unique periodic solution of
\begin{equation}\label{horizontal_mclosed}
-\frac{a}{b} m'' +  \kappa^2 m =  \kappa \Phi.
\end{equation}
\end{proposition}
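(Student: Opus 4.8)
The strategy closely parallels the open-curve case treated in the previous proposition, with the essential modification being that the boundary conditions $m(0)=m(1)=0$ are replaced by the requirement that $m$ be periodic on $\mathbb{S}^1$, and that we now work on the quotient $T_{[\gamma]}\mathcal{A}_1(\mathbb{S}^1)/\mathbb{S}^1$ rather than on $T_\gamma\mathcal{A}_1$ directly.

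First I would record the same local computation as before. Writing $P_h(w) = w - m\operatorname{v}$ with $m\in\mathcal{C}^\infty(\mathbb{S}^1,\mathbb{R})$ and using $w'=\Phi\operatorname{n}$ together with $\operatorname{v}'=\kappa\operatorname{n}$, one obtains exactly as in \eqref{phm} that
\[
P_h(w)' = \Phi\operatorname{n} - m'\operatorname{v} - m\kappa\operatorname{n},
\]
so that $P_h(w)'\cdot\operatorname{v} = -m'$ and $P_h(w)'\cdot\operatorname{n} = \Phi - m\kappa$. Substituting these into the horizontality condition \eqref{eqhorizontal}, namely $(P_h(w)'\cdot\operatorname{v})' = \tfrac{b}{a}\kappa(P_h(w)'\cdot\operatorname{n})$, yields $-m'' = \tfrac{b}{a}\kappa(\Phi - m\kappa)$, which is precisely \eqref{horizontal_mclosed} after multiplying through by $a/b$. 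This part is a verbatim repetition of the open case and requires no new idea.

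The substantive point is existence and uniqueness of a \emph{periodic} solution $m$ to \eqref{horizontal_mclosed}, together with the claim that $P_h([w])$ is well defined on the quotient by $\mathbb{S}^1$. For existence and uniqueness I would again invoke Lax--Milgram, but now applied on the closed subspace of $H^1(\mathbb{S}^1)$ consisting of periodic functions. The associated bilinear form is $B(m,g)=\int_0^1\bigl(\tfrac{a}{b}m'g' + \kappa^2 m g\bigr)\,ds$. This is continuous and symmetric; the delicate issue is coercivity, since $q=\kappa^2\ge 0$ may vanish identically (for a straight segment, though that is excluded for a closed immersion) or on large sets, so the $\kappa^2 m^2$ term alone cannot control the $L^2$-norm of $m$. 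The hard part will therefore be to rule out the constant functions from the kernel: the operator $-\tfrac{a}{b}m'' + \kappa^2 m$ annihilates no nonzero periodic $m$ precisely because if $-\tfrac{a}{b}m''+\kappa^2 m=0$ then pairing with $m$ gives $\int_0^1(\tfrac{a}{b}(m')^2+\kappa^2 m^2)\,ds=0$, forcing $m'\equiv 0$ and $\kappa m\equiv 0$; since a closed immersion has $\kappa\not\equiv 0$, the constant $m$ must vanish. Hence the kernel is trivial, the Fredholm alternative gives a unique periodic solution, and coercivity on the quotient follows.

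Finally I would check well-definedness on the quotient by $\mathbb{S}^1$. The equivalence class is $[w]=\{w+c\operatorname{v} : c\in\mathbb{R}\}$, and since $(c\operatorname{v})'=c\kappa\operatorname{n}$ changes $\Phi$ by $c\kappa$, the right-hand side $\kappa\Phi$ of \eqref{horizontal_mclosed} changes by $c\kappa^2$; correspondingly $m$ shifts by the solution of $-\tfrac{a}{b}m''+\kappa^2 m = c\kappa^2$, namely $m=c$, so $P_h(w)$ changes only by $c\operatorname{v} - c\operatorname{v}=0$ modulo the $\mathbb{S}^1$-direction. Thus $P_h([w])$ is independent of the representative, confirming that the projection descends to the quotient and lands in $\mathrm{Hor}_\gamma$, which completes the argument.
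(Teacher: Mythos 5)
Your proof is correct, and it reaches the result by a genuinely different route than the paper. The reduction to the ODE and the descent to the quotient are fine; the substantive difference is in how existence and uniqueness of a \emph{periodic} solution is established. The paper argues by classical ODE theory: it introduces fundamental solutions $y_1,y_2$ of the homogeneous equation $y''=\kappa^2 y$, uses Abel's formula (Wronskian $\equiv 1$) and variation of parameters, and reduces periodicity of $m$ to a $2\times 2$ linear system in $\bigl(m(0),m'(0)\bigr)$ whose determinant $\delta = 2-y_1(1)-y_2'(1)$ it evaluates via reduction of order, showing $\delta<0$ unless $y_1$ is constant, i.e.\ unless $\kappa\equiv 0$, which is impossible for a closed curve. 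You instead exploit the self-adjoint elliptic structure of $L=-\tfrac{a}{b}\partial_s^2+\kappa^2$ on the circle: pairing $Lm=0$ with $m$ and integrating by parts forces any periodic kernel element to be a constant annihilated by $\kappa$, hence zero since $\kappa\not\equiv 0$, and then the Fredholm alternative (index zero, trivial kernel) yields existence and uniqueness, with smoothness from elliptic regularity. The geometric input is identical in both proofs --- a closed immersed curve cannot be a straight line --- but your variational kernel argument replaces the paper's explicit determinant computation. What you lose is the explicit solution formula that variation of parameters provides; what you gain is brevity and robustness (your argument survives rougher coefficients or a general Sturm--Liouville leading term). Two small remarks: your closing phrase ``coercivity on the quotient follows'' is loose --- the clean statement is that a Fredholm operator of index zero with trivial kernel is bijective, or alternatively coercivity of the bilinear form on $H^1(\mathbb{S}^1)$ can be recovered from the trivial-kernel fact by a standard compactness argument --- and your well-definedness check (shifting $\Phi$ by $c\kappa$ shifts $m$ by exactly $c$, by uniqueness, leaving $w-m\operatorname{v}$ unchanged) is not part of the paper's proof of this proposition, which defers precisely that computation to the proof of the following theorem; including it here is a sound addition.
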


 \begin{proof}
As before the condition  for $w-m\operatorname{v}$ to be horizontal is \eqref{horizontal_m}.
The question is whether there exists a periodic solution $m$ of the equation
for given periodic functions $\kappa(x)$ and $\Phi(x)$. Since $\kappa(s+1)=\kappa(s)$ and $\Phi(s+1)=\Phi(s)$, we would like to satisfy $m'(1)=m'(0)$ and $m(1)=m(0)$. By the equation satisfied by $m$, it will imply that $m$ is a smooth periodic function on $\mathbb{S}^1$.
Let $y_1(s)$ and $y_2(s)$ be solutions of the equation $y''(s) - \kappa(s)^2 y(s)=0$, with initial conditions $y_1(0)=1$, $y_1'(0)=0$, $y_2(0)=0$, and $y_2'(0)=1$. Then Abel's formula implies that the Wronskian is
$$ W(s) = y_1(s)y_2'(s) - y_2(s)y_1'(s) \equiv 1,$$
and variation of parameters gives us the solution
$$ m(x) = c_1 y_1(s) + c_2 y_2(s) - y_1(s) \int_0^s \kappa(x)\Phi(x) y_2(x) \,dx + y_2(s) \int_0^s \kappa(x)\Phi(x) y_1(x) \, dx,$$
where $c_1=m(0)$ and $c_2=m'(0)$.

The question is how to choose $c_1$ and $c_2$ so that $m(1)=c_1$ and $m'(1)=c_2$. We clearly end up with the system
\begin{align*}
c_1 \big[y_1(1)-1\big] + c_2 y_2(1) &= By_1(1)-Ay_2(1) \\
c_2 y_1'(1) + c_2 \big[ y_2'(1)-1\big] &= By_1'(1)-Ay_2'(1),
\end{align*}
where $$A=\int_0^1 \kappa(x)\Phi(x)y_1(x)\,dx \qquad\text{and}\qquad B = \int_0^1 \kappa(x)\Phi(x)y_2(x)\,dx.$$
This has a solution if and only if the determinant
$$ \delta = [y_1(1)-1] [y_2'(1)-1] - y_2(1)y_1'(1)$$
is nonzero. Note that since the Wronskian is constant, we can write $\delta = 2 - y_2'(1) - y_1(1)$.

We now use reduction of order to write $y_2(s) = \phi(s) y_1(s)$, where
$$ \phi(s) = \int_0^s \frac{dx}{y_1(x)^2}.$$
It is obvious from the initial condition and the fact that $\kappa(s)^2$ is positive that $y_1(s)$ is strictly increasing for $s>0$, and $y_1'(s)$ is nonnegative for $s\ge 0$. Thus $\phi$ is always well-defined. We now have $y_2'(1) = \phi'(1) y_1(1) + \phi(1) y_1'(1)$, and thus our formula is
\begin{align*}
\delta &= 2 - \frac{1}{y_1(1)} - y_1'(1) \int_0^1 \frac{dx}{y_1(x)^2} - y_1(1) \\
&= -[y_1(1)-1/y_1(1)]^2 - y_1'(1) \int_0^1 \frac{dx}{y_1(x)^2}.
\end{align*}
We see that the only way this can be zero is if $y_1'(1)=0$ and $y_1(1)=y_1(1)$, and both these conditions are equivalent to $y_1(s)$ actually being constant, which only happens if $\kappa(s)$ is identically equal to zero on $[0,1]$. Hence unless the curve is a straight line, one can always solve the differential equation and get a unique periodic solution $m$.  Since $\gamma$ is a closed curve, $\gamma$ cannot be a straight line.
\end{proof}

Denote by $\mathbb{G}$ the Green function associated to equation~\eqref{horizontal_m}. By definition, the solution of
\begin{equation}\label{Green}
-\frac{a}{b} m'' +  \kappa^2 m =  \varphi,
\end{equation}
where $\varphi$ is any right-hand side, is
\begin{equation*}
m(s) = \int_0^1 \mathbb{G}(s, x)\varphi(x) dx,
\end{equation*}
where $m$ satisfies the additional condition:
\begin{itemize}
\item $m(0)=0$ and $m(1)=0$ for open curves,
\item $m$ is periodic for closed curves.
\end{itemize}

\begin{remark}{\rm
Using \eqref{phm}, observe that for any tangent vector $w\in T_{\gamma}\mathcal{A}_1(I)$ with $w' = \Phi \operatorname{n}$, one has
 \begin{equation}\label{quotient_metric22}
\begin{array}{cl}
\widetilde{G}^{a,b}(w,w )
 &= \int_0^1 \left(a (m')^2 + b (\Phi - m \kappa)^2\right) ds,
\end{array}
\end{equation}
where $m$ satisfies \eqref{horizontal_m} for open curves and \eqref{horizontal_mclosed} for closed curves.
}\end{remark}

We will also need the following expression of the quotient elastic metric on $\mathcal{A}_1([0, 1])$.

\begin{theorem}
Let $w$ and $z$ be two tangent vectors in $ T_{\gamma}\mathcal{A}_1([0, 1])$ with $w' = \Phi \operatorname{n} $ and  $z' = \Psi \operatorname{n}$, where $\Phi, \Psi \in \mathcal{C}^{\infty}([0, 1], \mathbb{R})$. Write $P_h(z) = z - p \operatorname{v}$, where $p$ satisfies $-a p'' + b \kappa^2 p =  b\kappa \Psi$ with $p(0)=p(1)=0$. Then the scalar product of $w$ and $z$ with respect to the quotient elastic metric $\widetilde{G}^{a,b}$ on the space of arc-length parameterized curves $\mathcal{A}_1([0, 1])$ reads
 \begin{equation}\label{hk}
\begin{array}{cl}
\widetilde{G}^{a,b}(w,z)
 &= \int_0^1 b  \Phi \left(\Psi-  \kappa p\right) ds.
\end{array}
\end{equation}
\end{theorem}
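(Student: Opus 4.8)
The plan is to reduce the bilinear quotient metric to a single evaluation of the ambient elastic metric $G^{a,b}$ that exploits the orthogonality of the horizontal projection. First I would polarize the quadratic identity \eqref{quotient_metric}: for any two tangent vectors this gives $\widetilde{G}^{a,b}(w,z) = G^{a,b}(P_h(w), P_h(z))$, so it suffices to compute the right-hand side. Rather than expanding both projections, the decisive move is to cancel one of them using orthogonality.

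The key step is this orthogonality simplification. By construction $P_h(w) = w - m\operatorname{v}$ with $m\operatorname{v}\in T_\gamma\mathcal{O}$, while $P_h(z)$ lies in $\textrm{Hor}_\gamma$, the $G^{a,b}$-orthogonal complement of $T_\gamma\mathcal{O}$. Hence $G^{a,b}(m\operatorname{v}, P_h(z)) = 0$ and therefore $G^{a,b}(P_h(w), P_h(z)) = G^{a,b}(w, P_h(z))$. Replacing $P_h(w)$ by $w$ is exactly the gain I want: since $w\in T_\gamma\mathcal{A}_1([0,1])$ we have $w'\cdot\operatorname{v} = 0$, so the entire first ($a$-)term in the integrand of $G^{a,b}$ vanishes identically and only the $b$-term survives.

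It then remains to evaluate that $b$-term. From $w' = \Phi\operatorname{n}$ we read off $w'\cdot\operatorname{n} = \Phi$, and applying \eqref{phm} to $z$ (with $p$ playing the role of $m$) gives $P_h(z)' = \Psi\operatorname{n} - p'\operatorname{v} - p\kappa\operatorname{n}$, so that $P_h(z)'\cdot\operatorname{n} = \Psi - \kappa p$. Substituting into the definition of $G^{a,b}$, and recalling that $\lvert\gamma'\rvert\equiv 1$ so that $D_s = d/ds$ and the arc-length weight is trivial, yields
\[
\widetilde{G}^{a,b}(w,z) = \int_0^1 b\,\Phi\,(\Psi - \kappa p)\, ds,
\]
as claimed.

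I do not anticipate a serious obstacle; the only point requiring care is the legitimacy of the orthogonality cancellation, which presupposes both that $P_h(z)$ is genuinely horizontal (guaranteed by the Sturm--Liouville equation for $p$ with the homogeneous boundary conditions $p(0)=p(1)=0$, exactly as in the preceding proposition) and that the splitting $T_\gamma\Cb_1([0,1]) = T_\gamma\mathcal{O}\oplus\textrm{Hor}_\gamma$ is available. One mild subtlety worth flagging is that the resulting formula is not manifestly symmetric in $w$ and $z$: as a consistency check I would confirm that the alternative route, expanding $G^{a,b}(P_h(w),P_h(z)) = \int_0^1 \big(a\,m'p' + b(\Phi-m\kappa)(\Psi-p\kappa)\big)\,ds$ and integrating $a\,m'p'$ by parts using the equation for $p$ (the boundary terms dropping out since $m(0)=m(1)=0$), collapses to the same value, in agreement with the diagonal case \eqref{quotient_metric22}.
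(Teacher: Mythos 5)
Your proof is correct, and it takes a genuinely shorter route than the paper's own argument. Both proofs start from the polarized identity $\widetilde{G}^{a,b}(w,z)=G^{a,b}(P_h(w),P_h(z))$ and both rest on the orthogonality of $\textrm{Hor}_{\gamma}$ and $T_{\gamma}\mathcal{O}$, but they exploit it differently. The paper expands \emph{both} projections, writing
\begin{equation*}
G^{a,b}(P_h(w),P_h(z)) \;=\; G^{a,b}(w,z)-G^{a,b}(m\operatorname{v},p\operatorname{v})
\;=\;\int_0^1\left(b\,\Phi\Psi-a\,m'p'-b\,\kappa^2mp\right)ds,
\end{equation*}
and must then integrate by parts and invoke the Sturm--Liouville equation \eqref{horizontal_m} satisfied by $m$ (the multiplier attached to $w$) to collapse this to $\int_0^1 b\,\Phi(\Psi-\kappa p)\,ds$. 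You instead cancel only \emph{one} projection: since $m\operatorname{v}\in T_{\gamma}\mathcal{O}$ and $P_h(z)\in\textrm{Hor}_{\gamma}$, you have $G^{a,b}(P_h(w),P_h(z))=G^{a,b}(w,P_h(z))$, after which the $a$-term dies pointwise because $w'\cdot\operatorname{v}=0$ for $w\in T_{\gamma}\mathcal{A}_1([0,1])$, and the formula drops out with no integration by parts and no use of the equation for $m$ at all --- the equation for $p$ enters only through the earlier proposition certifying that $z-p\operatorname{v}$ is horizontal, exactly the point you flag. What you lose is the manifest symmetry of the intermediate expression, which your consistency check recovers; what you gain is economy. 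It is also worth noting that the paper itself adopts precisely your one-sided cancellation in the discrete setting: the proof of Proposition~\ref{gradient_discrete_prop} opens with $G^{a,b}(P_h(w),P_h(z))=G^{a,b}(w,P_h(z))$ ``since the projection $P_h$ is orthogonal with respect to $G^{a,b}$,'' so your argument is the smooth analogue of the paper's own discrete computation.
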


\begin{proof}
Denote respectively by $P_h(w)$ and $P_h(z)$ the projections of $w$ and $z$ on the horizontal space, and define $m, p \in \mathcal{C}^{\infty}([0, 1], \mathbb{R})$ by $P_h(w) = w - m \operatorname{v}$ and $P_h(w) = z- p \operatorname{v}$.
Since the horizontal space is the orthogonal space to $T_{\gamma}\mathcal{O}$ for the elastic metric $G^{a, b}$, one has
$$G^{a,b}(w, z) = G^{a,b}(P_h(w)-m \operatorname{v}, P_h(z)-p \operatorname{v}) = G^{a,b}(P_{h}(w), P_{h}(z)) + G^{a,b}(m \operatorname{v}, p \operatorname{v}). $$
It follows that
\begin{equation*}
\begin{array}{cl}
\widetilde{G}^{a,b}(w,z) &= G^{a,b}(P_h(w), P_h(z)) = G^{a,b}(w, z) - G^{a,b}(m \operatorname{v}, p \operatorname{v})\\
 &= \int_0^1 \left( b \Phi \Psi - a m' p' - b \kappa^2 m p\right) ds.
 \end{array}
\end{equation*}
After integrating the second term by parts, one has
 \begin{equation*}\label{quotient_metric2}
\begin{array}{cl}
\widetilde{G}^{a,b}(w, z) &
  = \int_0^1\left( b \Phi \Psi +  p (a m'' - b \kappa^2 m)\right) ds.
  \end{array}
\end{equation*}
Using the differential equation \eqref{horizontal_m} satisfied by the function $m$, we obtain \eqref{hk}.
\end{proof}

For closed curves, the same construction gives a Riemannian metric on the quotient space $\mathcal{A}_1^f(\mathbb{S}^1)/\mathbb{S}^1$. We can extend the definition of this metric to the space
$\mathcal{A}_1(\mathbb{S}^1)/\mathbb{S}^1$ by the same formula.
We get the following result:
\begin{theorem}
Let $w$ and $z$ be two tangent vectors in $ T_{\gamma}\mathcal{A}_1(\mathbb{S}^1)$ with $w' = \Phi \operatorname{n} $ and  $z' = \Psi \operatorname{n}$, where $\Phi, \Psi \in \mathcal{C}^{\infty}(\mathbb{S}^1, \mathbb{R})$. Write $P_h([z]) = [z - p \operatorname{v}]$, where $p$ satisfies $-a p'' + b \kappa^2 p =  b\kappa \Psi$ with periodic boundary conditions. Then the scalar product of $[w]$ and $[z]$ with respect to the quotient elastic metric $\widetilde{G}^{a,b}$ on the space of arc-length parameterized curves $\mathcal{A}_1(\mathbb{S}^1)/\mathbb{S}^1$ reads
 \begin{equation}\label{hkclosed}
\begin{array}{cl}
\widetilde{G}^{a,b}([w],[z])
 &= \int_0^1 b  \Phi \left(\Psi-  \kappa p\right) ds.
\end{array}
\end{equation}
\end{theorem}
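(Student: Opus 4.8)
The plan is to run the argument of the preceding (open-curve) theorem almost verbatim, the two differences being that the homogeneous Dirichlet boundary conditions are replaced by periodicity and that one must verify the resulting expression descends to the quotient by $\mathbb{S}^1$. First I set $P_h([w]) = [w - m\operatorname{v}]$ and $P_h([z]) = [z - p\operatorname{v}]$, where $m$ and $p$ are the unique periodic solutions of $-\frac{a}{b}m'' + \kappa^2 m = \kappa\Phi$ and $-\frac{a}{b}p'' + \kappa^2 p = \kappa\Psi$. Their existence and uniqueness is exactly the content of the preceding proposition, which applies here because $\gamma$, being a closed curve, cannot be a straight line, so the determinant $\delta$ computed above is nonzero.

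Next, since $\textrm{Hor}_{\gamma}$ is by definition $G^{a,b}$-orthogonal to $T_{\gamma}\mathcal{O}$, and $m\operatorname{v}, p\operatorname{v}\in T_{\gamma}\mathcal{O}$, writing $w = (w-m\operatorname{v}) + m\operatorname{v}$ and $z = (z-p\operatorname{v})+p\operatorname{v}$ makes the cross terms drop out:
\[
G^{a,b}(w,z) = G^{a,b}(w-m\operatorname{v}, z-p\operatorname{v}) + G^{a,b}(m\operatorname{v}, p\operatorname{v}),
\]
so that $\widetilde{G}^{a,b}([w],[z]) = G^{a,b}(w-m\operatorname{v}, z-p\operatorname{v}) = G^{a,b}(w,z) - G^{a,b}(m\operatorname{v}, p\operatorname{v})$. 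Using $w'\cdot\operatorname{v} = z'\cdot\operatorname{v} = 0$ together with $w'\cdot\operatorname{n} = \Phi$ and $z'\cdot\operatorname{n} = \Psi$ gives $G^{a,b}(w,z) = \int_0^1 b\Phi\Psi\,ds$, while $(m\operatorname{v})' = m'\operatorname{v} + m\kappa\operatorname{n}$ (and similarly for $p\operatorname{v}$) gives $G^{a,b}(m\operatorname{v}, p\operatorname{v}) = \int_0^1 (am'p' + b\kappa^2 mp)\,ds$. Hence $\widetilde{G}^{a,b}([w],[z]) = \int_0^1 (b\Phi\Psi - am'p' - b\kappa^2 mp)\,ds$.

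The only computational step that behaves differently from the open case is the integration by parts of $-\int_0^1 am'p'\,ds$: the boundary term $-[am'p]_0^1$ now vanishes not because $p$ satisfies Dirichlet conditions but because $m'$ and $p$ are periodic, so that $m'(1)p(1) = m'(0)p(0)$. This yields $\widetilde{G}^{a,b}([w],[z]) = \int_0^1 \big(b\Phi\Psi + p(am'' - b\kappa^2 m)\big)\,ds$, and substituting $am'' - b\kappa^2 m = -b\kappa\Phi$ from the equation for $m$ produces \eqref{hkclosed}.

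The one genuinely new point, and the step I expect to require the most care, is verifying that the right-hand side of \eqref{hkclosed} depends only on the classes $[w]$ and $[z]$, i.e. that the metric is well defined on $\mathcal{A}_1(\mathbb{S}^1)/\mathbb{S}^1$. Replacing $w$ by $w + c\operatorname{v}$ changes $\Phi$ into $\Phi + c\kappa$ and, by linearity of the defining equation, changes $m$ into $m + c$, so the combination $\Phi - m\kappa$ is unchanged; the same holds for $z$. It then remains to check that $\int_0^1 b(\Phi + c\kappa)(\Psi - \kappa p)\,ds = \int_0^1 b\Phi(\Psi - \kappa p)\,ds$, i.e. that $\int_0^1 \kappa(\Psi - \kappa p)\,ds = 0$. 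This follows at once from the equation for $p$, since $\kappa(\Psi - \kappa p) = -\frac{a}{b}p''$ and $\int_0^1 p''\,ds = 0$ by periodicity, confirming that \eqref{hkclosed} is a well-defined bilinear form on the quotient.
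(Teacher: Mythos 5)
Your proof is correct and follows essentially the same route as the paper: the paper's own proof of this theorem consists only of the well-definedness check on the quotient (the formula itself being deferred to ``the same construction'' as the open-curve theorem), and your derivation reproduces exactly that open-curve computation, with the Dirichlet boundary term in the integration by parts replaced by the periodicity argument, before performing the same independence check. The only cosmetic difference is in the $w$-slot: the paper invokes symmetry, while you verify independence directly via $\int_0^1 \kappa(\Psi - \kappa p)\,ds = -\tfrac{a}{b}\int_0^1 p''\,ds = 0$; both are valid.
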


\begin{proof}
Let us check that the expression of $\widetilde{G}^{a,b}([w],[z])$ does not depend on the representative of $[w]$ and $[z]$ chosen. Set $z_2 = z+ c\operatorname{v}$ for some constant $c\in\mathbb{R}$. Then $z_2' = z' + c\kappa\operatorname{n} = (\Psi + c\kappa)\operatorname{n}$. Denote by $p_2$ the solution of $-a p_2'' + b \kappa^2 p_2 =  b\kappa (\Psi+c\kappa)$ with periodic boundary conditions. Then $-a (p_2-c)'' + b \kappa^2 (p_2-c) =  b\kappa \Psi$.
By uniqueness of the solution of equation $-a p'' + b \kappa^2 p =  b\kappa \Psi$, one has $p = p_2 - c$. Therefore
$$
 \int_0^1 b  \Phi \left((\Psi+ c)-  \kappa p_2\right) ds =  \int_0^1 b  \Phi \left(\Psi-  \kappa p\right) ds.
$$
By symmetry, one also has the independence with respect to the representative of $[w]$.
\end{proof}

For closed curves, this Riemannian metric can be lifted in a unique way to a degenerate metric on $\mathcal{A}_1(\mathbb{S}^1)$ with only degeneracy along the fibers of the projection
$\mathcal{A}_1(\mathbb{S}^1)\rightarrow \mathcal{A}_1(\mathbb{S}^1)/\mathbb{S}^1$. The advantage of the degenerate lift is that it allows us to compare closed curves irrespective of the position of the base point.
This situation is analogous to the one encountered in Section \ref{modtranslation}, where the degeneracy of the metric was along the orbits by space translations. See also
\cite{PAMI} where this idea is used in the context of $2$-dimensional shapes.

\subsection{Definition and derivative of the energy functional}
In this section we will determine the gradient of the energy functional corresponding to the metric $\widetilde{G}^{a,b}$ on the spaces $\mathcal{A}_1([0, 1])$ and $\mathcal{A}_1(\mathbb{S}^1)/\mathbb{S}^1$ of arc-length parameterized curves. We will use the following conventions:
\begin{itemize}
\item[-] the arc-length parameter of curves in $\mathcal{A}_1(I)$ will be denoted by $s\in I$,
\item[-] the time parameter of a path in $\mathcal{A}_1(I)$ will be denoted by $t\in[0, T]$,
\item[-] the parameter $\varepsilon\in(-\delta, +\delta)$ will be the parameter of deformation of a path in $\mathcal{A}_1(I)$.
\end{itemize}
Consider a variation $\gamma\colon (-\delta, +\delta)\times[0, T]\times I\rightarrow \mathbb{R}^2$ of a smooth path in $\mathcal{A}_1(I)$.
In general in the following sections we will denote partial derivatives by subscripted index notations.
Note that, since any curve in $\mathcal{A}_1(I)$ is parameterized by arc-length, the arc-length derivative $\gamma_s$ of $\gamma$ is a unit vector in the plane for any values of the parameters $(\varepsilon, t, s)$, previously denoted by $\operatorname{v}$. For this reason, we will write it as
\begin{equation}\label{eta_s}
\gamma_s(\varepsilon, t, s) = \left(\cos \theta(\varepsilon, t, s), \sin \theta(\varepsilon, t, s)\right),
\end{equation}
where $\theta(\varepsilon, t, s)$ denotes a smooth lift of the angle between the $x$-axis and the unit vector $\operatorname{v}(\varepsilon, t, s) = \gamma_s(\varepsilon, t, s).$
In particular for closed curves, $\theta(\cdot, \cdot, 0) = 2\pi R + \theta(\cdot, \cdot, 1)$ where $R$ is the rotation number of the curve.

\begin{definition}
For any $\varepsilon\in (-\delta, +\delta)$, the function $t\mapsto \gamma(\varepsilon, t, \cdot)$ is a path in $\mathcal{A}_1(I)$, whose
 energy is defined as
$$
E(\varepsilon) = \frac{1}{2}\int_0^T \widetilde{G}^{a,b}(\gamma_t, \gamma_t) dt,
$$
where $\gamma_t$ is the tangent vector to the path $t\mapsto \gamma(\varepsilon, t, \cdot)\in \mathcal{A}_1(I)$.
\end{definition}

\begin{theorem}\label{energy_smooth}
Consider a variation $\gamma\colon (-\delta, +\delta)\times[0, T]\times I\rightarrow \mathbb{R}^2$ of a smooth path in $\mathcal{A}_1(I)$, with $\gamma_s(\varepsilon, t, s) = \left(\cos \theta(\varepsilon, t, s), \sin \theta(\varepsilon, t, s)\right)$ for some angle $\theta(\varepsilon, t, s)$. Then the energy as a function of $\varepsilon$ is given by
\begin{equation}
E(\varepsilon) = \frac{1}{2}\int_0^T \int_0^1 \left( a m_s^2 + b (\theta_t - \theta_s m)^2\right) \, ds\,dt,
\end{equation}
where $m$ is uniquely determined by the condition
\begin{equation}\label{horizontal_present}
-a m_{ss} + b \theta_s^2 m = b\theta_s \theta_t,
\end{equation}
with $m(0)=m(1)=0$ for $I = [0, 1]$ and periodic boundary conditions for $I = \mathbb{S}^1=\mathbb{R}/\mathbb{Z}$.
The derivative of the energy functional is given by
\begin{equation}\label{pdt_scal}
\frac{dE}{d\varepsilon}(0) =   \int_0^T \int_0^1 \theta_\varepsilon(t,s) \xi(t,s) \,ds \,dt,
\end{equation}
where
\begin{equation}\label{xi}
\frac{1}{b}\xi = -\theta_{tt} + \partial_t(\theta_s m) + \partial_s(\theta_t m) - \partial_s(\theta_s m^2).
\end{equation}
\end{theorem}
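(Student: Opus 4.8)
The plan is to compute $\frac{dE}{d\varepsilon}(0)$ directly from the energy formula by differentiating under the integral sign, then to systematically eliminate every appearance of $m_\varepsilon$ (the $\varepsilon$-derivative of the horizontal function $m$) so that the final answer is expressed purely in terms of $\theta_\varepsilon$. First I would establish the energy formula itself: starting from the definition $E(\varepsilon)=\frac12\int_0^T \widetilde{G}^{a,b}(\gamma_t,\gamma_t)\,dt$, I use the expression \eqref{quotient_metric22} for the quotient metric. The identification is $\Phi\operatorname{n}=\gamma_{ts}=\partial_s\gamma_t$; since $\gamma_s=(\cos\theta,\sin\theta)$ we have $\gamma_{ts}=\theta_t(-\sin\theta,\cos\theta)=\theta_t\operatorname{n}$, so $\Phi=\theta_t$. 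Likewise $\kappa=\theta_s$. Substituting into \eqref{quotient_metric22} gives the integrand $a m_s^2+b(\theta_t-\theta_s m)^2$, and \eqref{horizontal_m} becomes \eqref{horizontal_present}.

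Next I differentiate $E(\varepsilon)$ in $\varepsilon$ and evaluate at $\varepsilon=0$. This produces
\[
\frac{dE}{d\varepsilon}=\int_0^T\!\!\int_0^1\Big(a\,m_s\,m_{\varepsilon s}+b(\theta_t-\theta_s m)(\theta_{t\varepsilon}-\theta_{s\varepsilon}m-\theta_s m_\varepsilon)\Big)\,ds\,dt,
\]
which contains the unwanted derivatives $m_\varepsilon$ and $m_{\varepsilon s}$. The key observation is that these can be removed using the defining equation \eqref{horizontal_present}: differentiating it in $\varepsilon$ gives a linear ODE for $m_\varepsilon$ whose right-hand side involves $\theta_\varepsilon$, and the operator $m\mapsto -a\,m_{ss}+b\theta_s^2 m$ is self-adjoint with respect to the boundary conditions (Dirichlet for $I=[0,1]$, periodic for $I=\mathbb{S}^1$). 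Concretely, I would integrate the $m_\varepsilon$-terms by parts in $s$ to reassemble the operator $-a\partial_s^2+b\theta_s^2$ acting on the horizontal function, pair it against $m_\varepsilon$, and then swap the roles via self-adjointness so that $m_\varepsilon$ is hit by this operator; the differentiated equation \eqref{horizontal_present} then replaces $(-a\partial_s^2+b\theta_s^2)m_\varepsilon$ by an explicit expression in $\theta_\varepsilon,\theta_t,\theta_s,m$. All boundary terms vanish because $m,m_\varepsilon$ satisfy homogeneous Dirichlet conditions (or are periodic), so the integration by parts is clean.

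After this cancellation the integrand is a function of $\theta_\varepsilon,\theta_{t\varepsilon},\theta_{s\varepsilon}$ and the background quantities. The remaining task is to move all derivatives off of $\theta_\varepsilon$ by integrating by parts in $t$ (for the $\theta_{t\varepsilon}$-terms) and in $s$ (for the $\theta_{s\varepsilon}$-terms), again checking that boundary contributions vanish—in $t$ because the variation is assumed fixed at the endpoints of the path (or by the standard convention for energy variations), and in $s$ by the Dirichlet/periodic conditions. Collecting the coefficient of $\theta_\varepsilon$ then yields $\xi$, and I expect the terms to organize exactly into $\tfrac1b\xi=-\theta_{tt}+\partial_t(\theta_s m)+\partial_s(\theta_t m)-\partial_s(\theta_s m^2)$ as claimed in \eqref{xi}. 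The main obstacle is the bookkeeping in the self-adjointness step: one must verify carefully that the $m_\varepsilon$-contributions from the kinetic term $a m_s^2$ and from the cross term $b(\theta_t-\theta_s m)(-\theta_s m_\varepsilon)$ combine into precisely the operator appearing in \eqref{horizontal_present}, so that the $\varepsilon$-differentiated equation can be applied and $m_\varepsilon$ disappears entirely; any sign error there propagates into a wrong $\xi$.
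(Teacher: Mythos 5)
Your proposal is correct and follows essentially the same route as the paper: identify $\Phi=\theta_t$ and $\kappa=\theta_s$ in \eqref{quotient_metric22} to obtain the energy formula, differentiate under the integral, dispose of the $m_\varepsilon$-terms, then integrate by parts in $s$ and $t$ (boundary terms vanish because of the Dirichlet/periodic conditions on $m$ and the fixed endpoints in $t$) to isolate $\theta_\varepsilon$ and read off $\xi$. The one real difference is the step you yourself flag as the main obstacle: eliminating $m_\varepsilon$. The paper's mechanism is much shorter than your self-adjointness scheme: after integrating $a\,m_s m_{\varepsilon s}$ by parts in $s$, \emph{all} terms containing $m_\varepsilon$ collect into
$\int_0^T\!\!\int_0^1 m_\varepsilon\left(-a m_{ss} - b\,\theta_s\theta_t + b\,\theta_s^2 m\right) ds\,dt$,
and this bracket vanishes identically, pointwise, by the defining equation \eqref{horizontal_present} itself --- the usual envelope-type observation that a first variation never requires differentiating the constrained (minimizing) quantity. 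Your plan (swap the operator onto $m_\varepsilon$ by self-adjointness, then substitute the $\varepsilon$-differentiated constraint) does close up, but note that as literally described it leaves the piece $-\int\!\!\int m_\varepsilon\, b\,\theta_s\theta_t\,ds\,dt$ untouched, still containing $m_\varepsilon$; you must apply the constraint and the same swap a second time to that piece, after which the two contributions cancel exactly to zero. The net result is identical, just with considerably more bookkeeping and more opportunities for the sign errors you worry about, whereas the paper's observation makes the whole step one line.
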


\begin{proof}
Equation~\eqref{eta_s} implies in particular that
\begin{equation*}
\gamma_{ss}(\varepsilon, t, s) = \theta_s(\varepsilon, t, s)\left(-\sin \theta(\varepsilon, t, s), \cos \theta(\varepsilon, t, s)\right) = \theta_s(\varepsilon, t, s)\operatorname{n}(\varepsilon, t, s),
\end{equation*}
where $s\mapsto \operatorname{n}(\varepsilon, t, s)= \left(-\sin \theta(\varepsilon, t, s), \cos \theta(\varepsilon, t, s)\right)$ is the normal vector field $\operatorname{n}$ along the parameterized curve $s\mapsto \gamma(\varepsilon, t, s)$.
In particular,  the curvature $\kappa(\varepsilon, t, s)$ of the curve $s\mapsto \gamma(\varepsilon, t, s)$ at $\gamma(\varepsilon, t, s)$ reads
\begin{equation*}
\kappa(\varepsilon, t, s) = \theta_s(\varepsilon, t, s).
\end{equation*}
For closed curves, one has $\theta_s(\varepsilon, t, s) = \theta_s(\varepsilon, t, s+1)$ since the curvature is a feature of the curve.
Furthermore the arc-length derivative of the tangent vector $\gamma_t$ along the path $t\mapsto \gamma(\varepsilon, t, s)$ reads
\begin{equation*}
\gamma_{ts}(\varepsilon, t, s) = \gamma_{st}(\varepsilon, t, s) = \theta_t(\varepsilon, t, s)\operatorname{n}(\varepsilon, t, s).
\end{equation*}
For $I = \mathbb{S}^1$, since $\gamma$ is a path of closed curves, $\gamma_t(\varepsilon, t, s) = \gamma_t(\varepsilon, t, s+1)$ and $\theta_t(\varepsilon, t, s) = \theta_t(\varepsilon, t, s+1)$.
Denote by $m\in\mathcal{C}^{\infty}([0,T]\times I, \mathbb{R})$ the solution, for each fixed $t$, of
\begin{equation}\label{mequation}
-\frac{a}{b} m_{ss}(t,s) + \theta_s^2(t,s) m(t,s) = \theta_s(t,s) \theta_t(t,s),
\end{equation}
with $m(t,0)=m(t,1)=0$ for $I = [0, 1]$ and periodic boundary conditions for $I = \mathbb{S}^1$, i.e.,
\begin{equation}\label{m}
m(t,s) = \int_0^1 \mathbb{G}(t;s, x)  \theta_x(t,x)\theta_t(t,x) dx,
\end{equation}
where $\mathbb{G}$ is the (time-dependent) Green function associated to equation~\eqref{Green} (we have omitted the dependency on $\varepsilon$ here in order to improve readability).
Using the expression of the metric $\widetilde{G}^{a,b}$ given in \eqref{quotient_metric22} with $\Phi = \theta_t$ and $\kappa = \theta_s$, one has
$$
E(\varepsilon) = \frac{1}{2}\int_0^T \int_0^1 \left( a m_s^2 + b (\theta_t - \theta_s m)^2\right) \, ds\,dt.
$$

Note that the $\varepsilon$-derivative $\gamma_{\varepsilon}$ at $\varepsilon = 0$ is a vector field along the path $t\mapsto \gamma(0, t, s)$. Hence for any fixed parameter $t\in[0, T]$,
$s\mapsto \gamma_{\varepsilon}(0, t, s)$ is an element of the tangent space $T_{\gamma(0, t, \cdot)}\mathcal{A}_1(I)$ whose arc-length derivative reads
\begin{equation}
\gamma_{\varepsilon s}(0, t, s) = \theta_\varepsilon(0, t, s)\operatorname{n}(0, t, s).
\end{equation}
The derivative of the energy functional with respect to the parameter $\varepsilon$ is therefore
$$
\frac{dE}{d\varepsilon}(0) =  \int_0^T \int_0^1 am_sm_{s\varepsilon} + b(\theta_t - \theta_s m)(\theta_{t\varepsilon} - \theta_{s\varepsilon} m - \theta_s m_{\varepsilon}) \, ds\,dt.
$$
Integrate the first term by parts in $s$, and we obtain
\begin{equation*}
\begin{split}
\frac{dE}{d\varepsilon}(0) &= \int_0^T \int_0^1 b(\theta_t - \theta_s m) (\theta_{t\varepsilon} - m \theta_{s\varepsilon}) \, ds\,dt\\
&+ \int_0^T \int_0^1 m_{\varepsilon} (-am_{ss} - b\theta_t\theta_s  + b \theta_s^2 m) \, ds\,dt,
\end{split}
\end{equation*}
and the last term vanishes by equation \eqref{horizontal_present}.
Integrating by parts in $s$ and $t$ to isolate $\theta_{\varepsilon}$, we obtain
\eqref{pdt_scal}--\eqref{xi}
\end{proof}

\subsection{Gradient of the energy functional}\label{gradient_section}
In Theorem~\ref{energy_smooth}, the derivative of the energy functional is expressed as the integral of an $L^2$-product, i.e., as a $1$-form. In order to obtain the gradient of the energy functional, we need to find the vector corresponding to  this $1$-form via the quotient elastic metric $\widetilde{G}^{a,b}$ on $\mathcal{A}_1(I)$. In other words, the aim is to rewrite the derivative of the energy functional, given by \eqref{pdt_scal}--\eqref{xi}, as
\begin{equation}\label{energygradient}
\frac{dE}{d\varepsilon}(0) = \int_{0}^{T} \widetilde{G}^{a,b}({\gamma}_\varepsilon,\grad E(\gamma)) dt,
\end{equation}
for some vector field $\grad E(\gamma)$ along the path $\gamma$ in $\mathcal{A}_1(I)$. Deforming the path $\gamma$ in the opposite direction of $\grad E(\gamma)$ will then give us an efficient way to minimise the path-energy of $\gamma$, and a path-straightening algorithm will allow us to find approximations of geodesics.

Based on equations \eqref{hk} and \eqref{hkclosed}, finding this Riemannian gradient now reduces to solving the following problem for each fixed time: given functions $\kappa(s)$ and $\xi(s)$, find a function $\beta(s)$ such that
\begin{equation}\label{betahard}
\beta(s) - \kappa(s) h(s) = \xi(s), \qquad \text{where } ah''(s) - b\kappa(s)^2 h(s) = -b\kappa(s) \beta(s),
\end{equation}
with boundary conditions $h(0)=h(1)=0$ for open curves, $h(0) = h(1)$ and $h'(0) = h'(1)$ for closed curves.
At first glance this problem seems rather tricky, since in terms of the Green
function $\mathbb{G}$ defined by \eqref{Green}, we have $h=\mathbb{G}\star (\kappa \beta)$, and so \eqref{betahard} appears to become
$h-\kappa \mathbb{G} \star (\kappa h) = \xi$, which would require inverting the operator $I-M_{\kappa} K M_{\kappa}$, where $K$ is the operator $h\mapsto \mathbb{G}\star h$ and $M_{\kappa}$ is the operator of multiplication by $\kappa$. What is remarkable in the following theorem is that
this computation actually ends up being a lot simpler than expected due to some nice cancellations.

\begin{theorem}\label{magnitude2}
Consider a variation $\gamma\colon (-\delta, +\delta)\times[0, T]\times I\rightarrow \mathbb{R}^2$ of a smooth path in $\mathcal{A}_1(I)$, with $\gamma_s(\varepsilon, t, s) = \left(\cos \theta(\varepsilon, t, s), \sin \theta(\varepsilon, t, s)\right)$ for some angle $\theta(\varepsilon, t, s)$.
Then the gradient $\grad E$ determined by formula \eqref{energygradient} satisfies
$(\grad E)_s(0, t, s) = \beta(t, s) \operatorname{n}(t ,s)$ with
\begin{equation}\label{gradient_energy_smoothbeta}
\begin{split}
\beta(0, t, s) = & \frac{1}{b} \xi(0, t, s) - \frac{1}{a} \theta_s(0, t, s) \int_{0}^s\left(\int_{0}^x \theta_s(0, t, y)\xi(0, t, y) dy\right) dx \\&+\frac{1}{a} \kappa(s) s \int_0^1\left(\int_0^x \kappa(y)\xi(y) dy\right) dx,
\end{split}
\end{equation}
or equivalently
\begin{multline}\label{gradient_energy_smooth}
\beta(t, s) = \frac{1}{b}\xi(t, s) - \theta_s(t,s) m_t(t,s) - \tfrac{b}{2a} C(t) s\theta_s(t,s) \\
 + \tfrac{1}{2} \theta_s(t,s) \int_0^s \left(m_x(t,x)^2 + \tfrac{b}{a} \theta_x(t,x)^2 m(t,x)^2 - \tfrac{b}{a} \theta_t(t,x)^2\right)\,dx,
\end{multline}
where $\xi$ is given by \eqref{xi}, $m$ satisfies \eqref{mequation}, and  $C(t)$ is given by
\begin{equation}\label{Cdef}
C(t) = \int_0^1 \theta_s(t,s) \theta_t(t,s) m(t,s) \, ds - \int_0^1 \theta_t(t,s)^2 \, ds.
\end{equation}
\end{theorem}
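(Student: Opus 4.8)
The plan is to turn the defining identity \eqref{energygradient} into a problem that is pointwise in $t$, solve that problem by exploiting a cancellation in the Sturm--Liouville operator, and finally reconcile the two displayed expressions for $\beta$.

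First I would write the unknown gradient in the same coordinates as every other tangent vector, $(\grad E)_s(0,t,s)=\beta(t,s)\operatorname{n}(t,s)$, and feed it into the metric formula \eqref{hk} (resp.\ \eqref{hkclosed}), taking $w=\gamma_\varepsilon$ so that $\Phi=\theta_\varepsilon$ and $z=\grad E$ so that $\Psi=\beta$. Comparing $\int_0^1 b\,\theta_\varepsilon(\beta-\theta_s h)\,ds$ with the $1$-form \eqref{pdt_scal}--\eqref{xi} and using that $\theta_\varepsilon$ is arbitrary forces, for each fixed $t$, the system
\begin{equation*}
\beta-\theta_s h=\tfrac1b\,\xi,\qquad a h''-b\theta_s^2 h=-b\theta_s\beta,
\end{equation*}
which is exactly \eqref{betahard} with $\kappa=\theta_s$, together with the boundary conditions $h(0)=h(1)=0$ for open curves and periodicity for closed curves inherited from the horizontal projection.

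The decisive step is to substitute $\beta=\tfrac1b\xi+\theta_s h$ from the first equation into the second: the two terms $\pm b\theta_s^2 h$ cancel and the operator collapses to $a h''=-\theta_s\xi$. Integrating twice gives $h(s)=h(0)+h'(0)s-\tfrac1a\int_0^s\!\int_0^x \theta_s\xi\,dy\,dx$, and imposing $h(0)=h(1)=0$ fixes $h(0)=0$ and $h'(0)=\tfrac1a\int_0^1\!\int_0^x\theta_s\xi\,dy\,dx$. Reinserting this into $\beta=\tfrac1b\xi+\theta_s h$ produces \eqref{gradient_energy_smoothbeta} immediately; this is precisely the simplification advertised before the theorem, since no inversion of $I-M_\kappa K M_\kappa$ is ever required. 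For closed curves the identical computation with periodic data fixes the two constants, solvability being guaranteed (as in the closed-curve projection proposition) because $\gamma$, being closed, is not a straight line.

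To pass to the equivalent form \eqref{gradient_energy_smooth}, I would observe that both candidates for $\beta-\tfrac1b\xi$ have the shape $\theta_s$ times a smooth function, say $A(s)$ coming from \eqref{gradient_energy_smoothbeta} and $B(s)$ coming from \eqref{gradient_energy_smooth}, and it suffices to prove the genuine function identity $A\equiv B$. Differentiating twice in $s$, one gets $A''=-\tfrac1a\theta_s\xi$; inserting the explicit \eqref{xi} and using the defining equation \eqref{mequation} to trade $\theta_s^2 m$ for $\theta_s\theta_t+\tfrac{a}{b}m_{ss}$, the identity $A''=B''$ reduces \emph{exactly} to the $t$-derivative of \eqref{mequation}, namely $-\tfrac{a}{b}m_{sst}+2\theta_s\theta_{st}m+\theta_s^2 m_t=\theta_{st}\theta_t+\theta_s\theta_{tt}$. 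Hence $A-B$ is affine in $s$. I would then pin it down at the endpoints: $A(0)=A(1)=0$ by construction, while $B(0)=-m_t(t,0)=0$ and, using $m(t,0)=m(t,1)=0$, $B(1)=-\tfrac{b}{2a}C(t)+\tfrac12\int_0^1(m_x^2+\tfrac{b}{a}\theta_x^2 m^2-\tfrac{b}{a}\theta_t^2)\,dx$. The vanishing $B(1)=0$ is equivalent to $\tfrac{a}{b}\int_0^1 m_s^2+\int_0^1\theta_s^2 m^2=\int_0^1\theta_s\theta_t m$, which is just the energy identity obtained by testing \eqref{mequation} against $m$ and integrating the $m_{ss}$ term by parts; this is precisely how $C(t)$ in \eqref{Cdef} is calibrated. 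An affine function with two zeros is identically zero, so $A\equiv B$ and the two formulas agree, with the closed case treated identically using periodicity in place of the Dirichlet endpoints.

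The main obstacle is the bookkeeping in the last step: organizing the several terms of \eqref{xi} so that the second $s$-derivative cleanly reduces to the $t$-differentiated equation \eqref{mequation}, and recognizing the endpoint identity that forces the specific combination \eqref{Cdef}. Everything else, including the cancellation that yields \eqref{gradient_energy_smoothbeta}, is routine integration by parts.
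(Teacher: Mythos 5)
Your treatment of the open-curve case is correct and follows the paper's own proof almost line by line: the same reduction of \eqref{energygradient} to the pointwise-in-$t$ system \eqref{betahard}, the same cancellation collapsing the operator to $a h'' = -\theta_s \xi$, the same double integration with Dirichlet conditions yielding \eqref{gradient_energy_smoothbeta}, and, for \eqref{gradient_energy_smooth}, the same two ingredients (the $t$-derivative of \eqref{mequation} together with \eqref{mequation} itself) plus the energy identity obtained by multiplying \eqref{mequation} by $m$ and integrating, which is exactly how the paper calibrates $C(t)$ in \eqref{Cdef}. The only difference is organizational: you \emph{verify} the second formula by showing the two candidate functions have equal second derivatives, hence differ by an affine function that vanishes at both endpoints, whereas the paper \emph{constructs} it by integrating $\theta_s\xi$ twice; the identities and cancellations involved are identical.

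The one genuine flaw is in your closed-curve remark. First, periodicity does not ``fix the two constants'': imposing $h(0)=h(1)$ and $h'(0)=h'(1)$ determines $c_1$ but leaves $c_2$ free (the paper notes there is no condition on $c_2$, as expected from the basepoint-translation degeneracy, and sets $c_2=0$ only to match the open-curve formula). More seriously, solvability is not ``guaranteed because $\gamma$ is not a straight line'': that argument concerns the operator $-\tfrac{a}{b}\partial_s^2+\kappa^2$ appearing in the horizontal projection \eqref{horizontal_mclosed}, which is invertible on periodic functions. After your cancellation the relevant operator is $\partial_s^2$ alone, which on periodic functions has the constants in its kernel, so $a h''=-\theta_s\xi$ admits a periodic solution only if the compatibility condition $\int_0^1 \theta_s\xi\,ds=0$ holds. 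This is true but must be proved; the paper verifies it by observing that, by \eqref{kappaxi}, $\theta_s\xi$ is the exact $s$-derivative of an expression built from $m_{st}$, $m_s$, $\theta_t$, $\theta_s m$, all of which are periodic in $s$ for a closed curve, so its integral over a period vanishes. Your argument needs this step (or an equivalent one) to cover $I=\mathbb{S}^1$.
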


\begin{proof}
By Theorem~\ref{energy_smooth}, the derivative of the energy functional is the integral of $\langle \theta_\varepsilon, \xi \rangle$ where $\xi$ is given by \eqref{xi}. Recall that $\theta_\varepsilon$ is related to the derivative $\gamma_\varepsilon$ by $\gamma_{\varepsilon s} = \theta_\varepsilon \operatorname{n}$.
Comparing with the expression of the quotient elastic metric \eqref{hk}, it follows that
 \begin{equation*}
\begin{array}{cl}
 \langle \theta_{\varepsilon}, \xi\rangle & = \widetilde{G}^{a,b}(\gamma_{\varepsilon}, \grad E),
\end{array}
\end{equation*}
where $\xi = b \left(\beta-  \kappa h\right)$, and where $\beta$ and $h$ are related to $\grad E$ by $(\grad E)_s = \beta\operatorname{n}$ and $-a h'' + b \kappa^2 h =  b\kappa \beta$.
Note that $\xi$ determine the functions $\beta$ and $h$ since the relation
$b\beta = \xi + b\kappa h$ implies
$$
-a h'' = \kappa \xi.
$$
A first integration gives
$$
h'(x) = -\frac{1}{a}\int_{0}^x \kappa(y) \xi(y) dy + c_1,
$$
for some constants $c_1$ and a second integration gives
\begin{equation}\label{hsimple}
h(s) = -\frac{1}{a}\int_0^s\left(\int_{0}^x \kappa(y) \xi(y) dy\right) dx + c_1 s + c_2,
\end{equation}
for some other constant $c_2$.

For open curves, using the condition $h(0) = h(1) = 0$, we obtain $c_2 = 0$ and $$c_1 = \frac{1}{a}\int_0^1\left(\int_{0}^x \kappa(y) \xi(y) dy\right) dx.$$ Therefore
\begin{equation*}
h(s) = \frac{1}{a}\int_{0}^s\left(\int_{0}^x -\kappa(y)\xi(y) dy\right)~dx +\frac{1}{a} s \int_0^1\left(\int_{0}^x \kappa(y) \xi(y) dy\right) dx,
\end{equation*}
and
\begin{equation*}
\beta(s) = \frac{1}{b} \xi(s) - \frac{1}{a}  \kappa(s) \int_{0}^s\int_{0}^x \kappa(y)\xi(y) dy~dx+\frac{1}{a} \kappa(s) s \int_0^1\left(\int_{0}^x \kappa(y) \xi(y) dy\right) dx.
\end{equation*}
Substituting $\kappa = \theta_s$ gives \eqref{gradient_energy_smoothbeta}.

Moreover
by formula \eqref{xi} we have
 \begin{equation}\label{xistep1}
 \kappa \xi = -\theta_s \theta_{tt} + 2\theta_s \theta_{ts} m + \theta_s^2 m_t + \theta_t\theta_s m_s - \theta_{ss} \theta_s m^2 - 2\theta_s^2 m m_s.
 \end{equation}
 But also differentiating \eqref{mequation} in time gives
$$ am_{sst} - b\theta_s^2 m_t = 2b\theta_s\theta_{st} m - b\theta_{st} \theta_t - b\theta_s\theta_{tt},$$
and eliminating $\theta_s\theta_{tt}$ in \eqref{xistep1} gives the equation
$$ \kappa \xi = \tfrac{a}{b} m_{sst} + \theta_{st} \theta_t + \theta_s \theta_t m_s - \theta_{ss}\theta_s m^2 - 2\theta_s^2 mm_s.$$
Now substitute from \eqref{mequation} the relation $\theta_s\theta_t = \theta_s^2 m - \tfrac{a}{b} m_{ss}$, and we obtain
$$
 -\tfrac{a}{b} h_{ss} = \kappa \xi = \tfrac{a}{b} m_{sst} + \theta_{st}\theta_t - \tfrac{a}{b} m_s m_{ss} - \theta_{ss}\theta_s m^2 - \theta_s^2 mm_s.
$$
The right side is now easy to integrate in $s$, and we get
\begin{equation}\label{kappaxi}
 -ah_s = a m_{st} + \tfrac{1}{2} b \theta_t^2 - \tfrac{1}{2} a m_s^2 - \tfrac{1}{2} b \theta_s^2 m^2 + \tfrac{b}{2a} C,
  \end{equation}
where the constant $C$ is chosen so that both sides integrate to zero between $s=0$ and $s=1$ (since $h(0)=h(1)=0$). Multiplying both sides of
\eqref{mequation} by $m$ and integrating from $s=0$ to $s=1$, we conclude that $C(t)$ satisfies \eqref{Cdef}.
Another integration in $s$ gives the formula
\begin{equation}\label{heq}
\begin{split}
h(t,s) = &-m_t(t,s) + \tfrac{1}{2} \int_0^s m_x(t,x)^2 \, dx + \tfrac{b}{2a} \int_0^s \theta_x(t,x)^2m(t,x)^2 \\& - \theta_t(t,x)^2 \, dx - \tfrac{b}{2a} C(t)s.
\end{split}
\end{equation}
Since $m(t,0)=m(t,1)=0$ for all $t$, this clearly vanishes at $s=0$ as it should; furthermore it is easy to check that it also vanishes at $s=1$ by definition of $C$. Plugging $h$ given by \eqref{heq} into the formula $\beta = \frac{1}{b}\xi + \kappa h$, we obtain
\eqref{gradient_energy_smooth} as desired.

For closed curves,  using the conditions $h(0) = h(1)$ and $h'(0) = h'(1)$ in \eqref{hsimple}, we obtain $c_1 = \frac{1}{a}\int_0^1\left(\int_{0}^x \kappa(y) \xi(y) dy\right) dx$ and the condition $\int_0^1 \kappa(s) \xi(s)\,ds = 0$,
which is satisfied by \eqref{kappaxi} since the right hand side is periodic. Note that there is no condition on $c_2$ as expected. We take $c_2 = 0$ in order to match the formula for open curves.
\end{proof}

\begin{remark}
Given the derivative of the gradient flow $(\grad E)_s(0, t, s) = \beta(t, s) \operatorname{n}(t ,s)$ with $\beta(t, s)$ given by \eqref{gradient_energy_smoothbeta} or \eqref{gradient_energy_smooth}, we have flexibility in the choice of the constant of integration to obtain $\grad E$. This is related to the fact that the curves are considered modulo translations (see Section~\ref{modtranslation}). In the numerics we used the condition $\grad E(0) = 0$, which corresponds to representing curves modulo translations as curves starting at the origin. Furthermore, there is no guarantee that $\int_0^1 \beta(t, s) \operatorname{n}(t ,s) =0$, in other words the gradient may not preserve the closedness condition. Since the space of closed curves is a codimension $2$ submanifold of the vector space of open curves, we have to project the gradient of the energy functional to the tangent space of the space of closed curves. This projection is given by $\grad E(s) \mapsto \grad E(s) - s\int_0^1\grad E(x) dx.$
\end{remark}

\section{Quotient elastic metrics on arc-length parameterized piecewise linear curves}\label{discretization}

\subsection{Notation}
Let us consider a ``chain'' given by points joined by rigid rods of length $1/n$.
We denote the points by $\gamma_k$ for $1\le k\le n$, and periodicity is enforced by requiring $\gamma_{n+1} = \gamma_1$
and $\gamma_0 = \gamma_n$. We let $$\textrm{v}_k = n(\gamma_{k+1}-\gamma_k)$$ denote the unit vectors along the rods,
and $\theta_k$ be the angle between the $x$-axis and $\textrm{v}_k$, so that
$$
\textrm{v}_k = (\cos \theta_k, \sin\theta_k).
$$
The unit normal vectors are defined by
$$
\textrm{n}_k = (-\sin\theta_k, \cos \theta_k).
$$
We will also introduce the variation of the angles $\theta_k$:
$$
\Delta_k = \theta_k -\theta_{k-1}.
$$
Vector fields along a chain are denoted by sequences $w = (w_k: 1\le k\le n)$. A vector field $w$ preserves the arc-length parameterization
if and only if
$$\frac{d}{dt}\big|_{t=0} \lvert \gamma_{k+1}(t)-\gamma_k(t)\rvert^2 = \tfrac{2}{n} \langle w_{k+1}-w_k, \textrm{v}_k\rangle = 0,$$
for any $k$, where $\gamma_k(t)$ is any variation of  $\gamma_k$ satisfying $w_k = \gamma_k'(0)$. In particular, any vector field preserving the arc-length parameterization satisfies
$$
w_{k+1}-w_k = \tfrac{1}{n} \phi_k  \textrm{n}_k,
$$
for some sequence $\phi = (\phi_k: 1\le k\le n)$.

\subsection{Discrete version of the elastic metrics}

The discrete elastic metric is given by
\begin{equation}\label{elastic_discrete}
G^{a,b}(w,w) = n\sum_{k=1}^n \big( a\langle w_{k+1}-w_k, \textrm{v}_k\rangle^2 + b\langle w_{k+1}-w_k, \textrm{n}_k\rangle^2\big),
\end{equation}
which clearly agrees with \eqref{elastic_metric_abc} in the limit as $n\to \infty$ using $w(k/n) = w_k$. In addition this metric
has the same property as \eqref{elastic_metric_abc} in that the $a$ term disappears when $w$ is a field that preserves the arc-length
parameterization.
For two vector fields $w$ and $z$, the expression of their $G^{a,b}$ scalar product reads
\begin{equation}\label{elastic_discrete_product}
\begin{split}
G^{a,b}(w,z) =  n\sum_{k=1}^n &\big( a\langle w_{k+1}-w_k, \textrm{v}_k\rangle\langle z_{k+1}-z_k, \textrm{v}_k\rangle \\&+ b\langle w_{k+1}-w_k, \textrm{n}_k\rangle\langle z_{k+1}-z_k, \textrm{n}_k\rangle\big).
\end{split}
\end{equation}
For further use note that if $w$ preserves the arc-length parameterization and $z$ is arbitrary,
\begin{equation}\label{elastic_discrete_product2}
G^{a,b}(w,z) = n\sum_{k=1}^n b\langle w_{k+1}-w_k, \textrm{n}_k\rangle\langle z_{k+1}-z_k, \textrm{n}_k\rangle.
\end{equation}

\subsection{Horizontal space for the discrete elastic metrics }
Assume now that $w$ preserves the arc-length parameterization, and write $n (w_{k+1} - w_k) =  \phi_k \textrm{n}_k$ for
some numbers $\phi_k$.
The ``vertical vectors'' will still be all those of the form $u_k = g_k \textrm{v}_k$ for some numbers $g_k$,
although it is not clear
in the discrete context if these actually represent the nullspace of a projection as in the smooth case. Let us show the following:

\begin{theorem}\label{horizontal_discrete_projection_thm}
If $(w_k : 1\le k\le n)$ satisfies $n(w_{k+1} - w_k) = \phi_k \operatorname{n}_k$, then its projection onto the orthogonal space to the space
spanned by vectors of the form $u_k = g_k \operatorname{v}_k$,
with respect to the discrete elastic metric \eqref{elastic_discrete} is
\begin{equation}\label{basic_projection}
P_h(w) = w_k - m_k \operatorname{v}_k
\end{equation}
where the numbers $m_k$ satisfy
\begin{equation}\label{vertical_discrete_projection}
\begin{split}
\tfrac{b}{n}  \sin{\Delta_k} \phi_{k-1} &=  (a + a\cos^2{\Delta_k} + b \sin^2{\Delta_k}) m_k \\&- a\cos{\Delta_k}m_{k-1} - a\cos{\Delta_{k+1}} m_{k+1}
\end{split}
\end{equation}
with $\operatorname{v}_k = (\cos{\theta_k}, \sin{\theta_k})$ and $\Delta_k = \theta_k-\theta_{k-1}$.
\end{theorem}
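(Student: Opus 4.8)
The plan is to characterize $P_h(w)$ as the unique field of the form $w-u$, with $u$ vertical, that is $G^{a,b}$-orthogonal to the entire vertical space $V=\{(g_k\operatorname{v}_k):g_k\in\mathbb{R}\}$, and then to read off \eqref{vertical_discrete_projection} as the Euler equation of this orthogonality condition. By definition the horizontal projection is $P_h(w)=w-u$, where $u=(m_k\operatorname{v}_k)$ is the $G^{a,b}$-orthogonal projection of $w$ onto $V$, and the characterizing property is that $G^{a,b}(w-u,z)=0$ for every $z=(g_k\operatorname{v}_k)\in V$. This at once justifies the announced form \eqref{basic_projection}, and reduces the theorem to turning the orthogonality relation into the stated recurrence.

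First I would record the elementary inner products produced by the angle increments. Since $\operatorname{v}_{k+1}=(\cos\theta_{k+1},\sin\theta_{k+1})$ and $\theta_{k+1}=\theta_k+\Delta_{k+1}$, one has $\langle\operatorname{v}_{k+1},\operatorname{v}_k\rangle=\cos\Delta_{k+1}$ and $\langle\operatorname{v}_{k+1},\operatorname{n}_k\rangle=\sin\Delta_{k+1}$. Consequently, for $u_k=m_k\operatorname{v}_k$ and $z_k=g_k\operatorname{v}_k$ the rod differences satisfy $\langle u_{k+1}-u_k,\operatorname{v}_k\rangle=m_{k+1}\cos\Delta_{k+1}-m_k$ and $\langle u_{k+1}-u_k,\operatorname{n}_k\rangle=m_{k+1}\sin\Delta_{k+1}$, and likewise for $z$. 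On the other hand, because $w$ preserves the arc-length parameterization, $\langle w_{k+1}-w_k,\operatorname{v}_k\rangle=0$ and $\langle w_{k+1}-w_k,\operatorname{n}_k\rangle=\tfrac{1}{n}\phi_k$.

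Substituting these into \eqref{elastic_discrete_product}, the condition $G^{a,b}(w-u,z)=0$ becomes, after dividing by $n$,
$$\sum_{k=1}^n\Big[a\big(m_k-m_{k+1}\cos\Delta_{k+1}\big)\big(g_{k+1}\cos\Delta_{k+1}-g_k\big)+b\big(\tfrac{1}{n}\phi_k-m_{k+1}\sin\Delta_{k+1}\big)g_{k+1}\sin\Delta_{k+1}\Big]=0.$$
Since the $g_k$ are arbitrary, the decisive step is to shift indices so as to collect the coefficient of each $g_j$. Writing $A_k=a(m_k-m_{k+1}\cos\Delta_{k+1})$ and $B_k=b(\tfrac{1}{n}\phi_k-m_{k+1}\sin\Delta_{k+1})$, the coefficient of $g_j$ is $A_{j-1}\cos\Delta_j+B_{j-1}\sin\Delta_j-A_j$, which must vanish for every $j$. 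Expanding $A_{j-1}$, $A_j$ and $B_{j-1}$ and regrouping the terms carrying $m_{j-1}$, $m_j$, $m_{j+1}$ produces exactly \eqref{vertical_discrete_projection}, the coefficient $a+a\cos^2\Delta_j+b\sin^2\Delta_j$ of $m_j$ arising from combining the $-am_j\cos^2\Delta_j$, $-bm_j\sin^2\Delta_j$ and $-am_j$ contributions.

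I expect the main obstacle to be purely organizational rather than analytic: one must carry out the index shift $k+1\mapsto j$ faithfully under the cyclic convention $\gamma_{n+1}=\gamma_1$, $\gamma_0=\gamma_n$ (so that $A_0=A_n$ and $\Delta_1=\theta_1-\theta_n$), ensuring that the telescoped contributions close up periodically and no boundary terms survive; the same derivation covers the open and closed cases since no integration by parts is lost. Finally, to legitimize speaking of \emph{the} projection, I would note that the symmetric tridiagonal (circulant) system \eqref{vertical_discrete_projection} is invertible because $G^{a,b}$ restricted to the finite-dimensional space $V$ is positive definite: a vertical field $u=(m_k\operatorname{v}_k)$ of zero energy must have all rod differences vanishing, hence be a constant field, and no nonzero constant field is of the form $m_k\operatorname{v}_k$ for a genuinely turning chain (in particular a closed one). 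This is the discrete counterpart of the Lax--Milgram solvability used in the smooth case.
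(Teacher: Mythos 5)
Your proposal is correct and follows essentially the same route as the paper's proof: impose $G^{a,b}(w-m\operatorname{v},g\operatorname{v})=0$ for all choices of $(g_k)$, expand the metric using $\langle\operatorname{v}_{k+1},\operatorname{v}_k\rangle=\cos\Delta_{k+1}$ and $\langle\operatorname{v}_{k+1},\operatorname{n}_k\rangle=\sin\Delta_{k+1}$, reindex to isolate the coefficient of each $g_j$, and set that coefficient to zero to obtain \eqref{vertical_discrete_projection}. Your closing observation on unique solvability (positive definiteness of $G^{a,b}$ restricted to the vertical space) is a sound supplement not contained in the paper's proof, which instead treats invertibility afterwards via diagonal dominance of the cyclic tridiagonal matrix $\operatorname{T}$ when the angles $\Delta_k$ are small.
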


\begin{proof}
For every vertical vector $(g_k \textrm{v}_k)$ for any numbers $g_k$, we want to see that $G^{a,b}(w-m\textrm{v}, g\textrm{v}) = 0$. We therefore get
\begin{align*}
0 &= \sum_{k=1}^n  a \langle w_{k+1}-w_k - m_{k+1}\textrm{v}_{k+1} + m_k \textrm{v}_k, \textrm{v}_k\rangle \langle g_{k+1}\textrm{v}_{k+1}-g_k\textrm{v}_k, \textrm{v}_k\rangle \\
&\qquad\qquad + b\langle w_{k+1}-w_k - m_{k+1}\textrm{v}_{k+1} + m_k\textrm{v}_k, \textrm{n}_k\rangle \langle g_{k+1}\textrm{v}_{k+1} - g_k \textrm{v}_k, \textrm{n}_k \rangle  \\
&= \sum_{k=1}^n a ( m_k -m_{k+1} \langle \textrm{v}_{k+1},\textrm{v}_k\rangle) (g_{k+1}\langle \textrm{v}_{k+1},\textrm{v}_k\rangle - g_k) \\
&\qquad\qquad + b ( \tfrac{1}{n} \phi_k - m_{k+1} \langle \textrm{v}_{k+1}, \textrm{n}_k\rangle) g_{k+1} \langle \textrm{v}_{k+1}, \textrm{n}_k\rangle.
\end{align*}
Using the identities
$$
\langle \textrm{v}_{k+1}, \textrm{v}_{k}\rangle = \cos \theta_{k+1} \cos \theta_k + \sin \theta_{k+1} \sin \theta_k = \cos \Delta_{k+1},
$$
and
$$
\langle \textrm{v}_{k+1}, \textrm{n}_k\rangle = -\cos \theta_{k+1} \sin\theta_k + \sin \theta_{k+1} \cos \theta_k = \sin \Delta_{k+1},
$$
one gets
\begin{align*}
0 &= \sum_{k=1}^n g_k \Big[ a (m_{k-1} - m_k \cos{\Delta_k}) \cos{\Delta_k} - a(m_k - m_{k+1} \cos{\Delta_{k+1}}) \\
&\qquad\qquad + \tfrac{1}{n} b\phi_{k-1} \sin{\Delta_k}
- b m_k \sin^2{\Delta_k}\Big],
\end{align*}
after reindexing. Since this must be true for every choice of $g_k$, we obtain \eqref{vertical_discrete_projection}.
\end{proof}
\begin{remark}\label{stability}{\rm
It is easy to check that \eqref{vertical_discrete_projection} is a discretization of \eqref{horizontal_present}, as expected.
Note that equation~\eqref{vertical_discrete_projection} can be rewritten as
\begin{equation*}
\frac{b}{n}\left(\begin{smallmatrix}
 \sin \Delta_1 \phi_n \\ \sin \Delta_2 \phi_1 \\ \sin \Delta_3 \phi_2\\ \vdots \\ \sin \Delta_{n-1} \phi_{n-2} \\\sin \Delta_n \phi_{n-1}
\end{smallmatrix}\right)
= \operatorname{T}
\left(\begin{smallmatrix}
m_1 \\m_2  \\m_3 \\ \vdots \\m_{n-1} \\m_{n}
\end{smallmatrix}\right)
\end{equation*}
where $\operatorname{T}$ is a cyclic tridiagonal matrix of the form
\begin{equation}\label{T}
\operatorname{T} = \left(\begin{smallmatrix}
d_1& \tau_2 & 0 & 0 & \cdots & 0 & 0  &\tau_1 \\
\tau_2 & d_2 & \tau_3 & 0 &   \cdots & 0 &  0 & 0\\
0 & \tau_3 &  d_3 & \tau_4 & \cdots &0 &  0 & 0\\
\vdots & \vdots & \vdots & \vdots  & \cdots & \vdots & \vdots & \vdots  \\
0 & 0 & 0 & 0 &  \cdots &   \tau_{n-1} & d_{n-1} & \tau_n\\
\tau_1 & 0 & 0 & 0 &  \cdots &   0 & \tau_{n} & d_{n}  \\
\end{smallmatrix}\right)
\end{equation}
with $d_k =  a + a \cos^2 \Delta_k + b \sin^2 \Delta_k$ and $\tau_k = -a \cos\Delta_k$.
Note that $\operatorname{T}$ is a small deformation of a tridiagonal matrix which can be inverted in $O(n)$ operations using the Thomas algorithm. Observe that $d_k> \tau_k+\tau_{k+1}$ as soon as $\cos \Delta_{k+1}> -\frac{3}{4}$, hence the matrix $\operatorname{T}$ is strictly dominant as soon as the angles between two successive rods are small enough, and this can be easily achieved by raising the number of points. This implies that the Thomas algorithm is numerically stable (\cite{High}).
See \cite{Dubeau} where algorithms are presented to invert cyclic tridiagonal matrices.
Other  algorithms for the solution of cyclic tridiagonal systems are given for example in \cite{Clive}.
}
\end{remark}

\subsection{Definition and derivative of the energy functional in the discrete case}

Consider a path $t\mapsto \gamma_k(t)$, $0\leq t\leq T$,   preserving the arc-length parameterization (i.e., the length of the rods) and connecting two positions of the chain $\gamma_{1,k}$ and $\gamma_{2,k}$. Write $$\gamma_{k+1}(t) - \gamma_k(t) = \tfrac{1}{n}\textrm{v}_k(t) = \tfrac{1}{n}(\cos{\theta_k(t)}, \sin{\theta_k(t)}).$$ We will use a dot for the differentiation with respect to the parameter $t$ along the path. In particular $w = \dot{\gamma}$ is a vector field along the chain $\gamma$ satisfying
$$w_{k+1}(t) - w_k(t) =\tfrac{1}{n} \dot{\theta}_k(t) \textrm{n}_k(t).$$
Let $\Delta_k(t) = \theta_k(t) - \theta_{k-1}(t)$.
Given a  variation $\varepsilon\mapsto\gamma_k(\varepsilon, t)$, $\varepsilon \in (-\delta, \delta)$, of the path $\gamma_k(0, t) = \gamma_k(t)$
 preserving the arc-length parameterization, let us compute the energy functional for the discrete elastic metrics and its derivative at $\varepsilon = 0$. We will use a subscript $\varepsilon$ for the differentiation with respect to $\varepsilon$, in particular we will use the notation
$$
\frac{d}{d\varepsilon}|_{\varepsilon=0}\left(\gamma_{k+1}(\varepsilon, t) - \gamma_k(\varepsilon, t)\right) = \tfrac{1}{n} \theta_{\varepsilon, k}~ \textrm{n}_k(0,t).
$$

\begin{theorem}\label{energy_discrete_derivative}
Suppose we have a family of curves $\gamma_k(\varepsilon, t)$ depending on time and joining fixed curves $\gamma_{1,k}$ and $\gamma_{2,k}$
(which is to say that $\gamma_k(\varepsilon, 0) = \gamma_{1,k}$ and $\gamma_k(\varepsilon, T) = \gamma_{2,k}$ for all $\varepsilon$ and $k$).
Then the energy as a function of $\varepsilon$ is
\begin{equation}\label{energyepsilon}
E(\varepsilon) = \tfrac{n}{2} \int_0^T \sum_{k=1}^n \Big( a(m_k - m_{k+1} \cos{\Delta_{k+1}})^2 + b(\tfrac{1}{n} \dot{\theta}_k - m_{k+1} \sin{\Delta_{k+1}})^2\Big) \, dt,
\end{equation}
where $m$ satisfies \eqref{vertical_discrete_projection} with $\phi_k = \dot{\theta}_k$. Its derivative at $\varepsilon=0$ is given by
$$ \frac{dE}{d\varepsilon}(0) =  \int_0^T\frac{1}{n} \sum_{k=1}^n \theta_{\varepsilon, k}(0, t) \xi_k(t) \, dt,$$
where $\xi_k$ is given by
\begin{multline}\label{xidef}
\xi_k = -b \ddot{\theta}_k +  b n (\dot{m}_{k+1} \sin{\Delta_{k+1}}  + m_{k+1} \cos \Delta_{k+1} \dot{\theta}_{k+1} - m_k \cos \Delta_k \dot{\theta}_{k-1})\\
+ n^2 (b-a) (m_k^2 \sin{\Delta_k} \cos{\Delta_k} -  m_{k+1}^2 \sin{\Delta_{k+1}} \cos{\Delta_{k+1}}) \\
+ a n^2 m_k (m_{k-1} \sin{\Delta_k} - m_{k+1} \sin{\Delta_{k+1}}).
\end{multline}
\end{theorem}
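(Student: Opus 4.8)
\emph{Plan.} The strategy is to mirror the smooth computation of Theorem~\ref{energy_smooth}, working throughout with the discrete projection of Theorem~\ref{horizontal_discrete_projection_thm}. First I would obtain the energy formula~\eqref{energyepsilon} by applying that projection to the velocity field $w=\dot\gamma$, which preserves the arc-length parameterization with $\phi_k=\dot\theta_k$. Writing $P_h(w)_k = w_k - m_k\textrm{v}_k$ and computing the two components of $P_h(w)_{k+1}-P_h(w)_k$ along $\textrm{v}_k$ and $\textrm{n}_k$, the identities $\langle\textrm{v}_{k+1},\textrm{v}_k\rangle=\cos\Delta_{k+1}$ and $\langle\textrm{v}_{k+1},\textrm{n}_k\rangle=\sin\Delta_{k+1}$ give the tangential component $A_k = m_k - m_{k+1}\cos\Delta_{k+1}$ and the normal component $B_k = \tfrac1n\dot\theta_k - m_{k+1}\sin\Delta_{k+1}$. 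Substituting into the discrete metric~\eqref{elastic_discrete} yields $\widetilde{G}^{a,b}(\dot\gamma,\dot\gamma) = n\sum_k(aA_k^2+bB_k^2)$, and hence~\eqref{energyepsilon}.

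The heart of the matter is the $\varepsilon$-differentiation, where the key simplification I would exploit is an envelope argument. Since $m=m(\varepsilon,t)$ is determined by~\eqref{vertical_discrete_projection}, a naive chain rule would produce terms in $\partial_\varepsilon m_k$; but the crucial observation is that~\eqref{vertical_discrete_projection} is exactly the stationarity condition $\partial_{m_j}\big[\sum_k(aA_k^2+bB_k^2)\big]=0$ for the energy integrand viewed as a quadratic form in $m$. I would verify this by a one-line computation of $\tfrac12\partial_{m_j}\sum_k(aA_k^2+bB_k^2) = aA_j - a\cos\Delta_j A_{j-1} - b\sin\Delta_j B_{j-1}$, which reorganizes precisely into the left-hand minus right-hand side of~\eqref{vertical_discrete_projection}. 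Consequently all contributions of $\partial_\varepsilon m_k$ cancel, and I may differentiate treating $m$ as frozen, letting $\partial_\varepsilon$ act only through $\partial_\varepsilon\Delta_{k+1}=\theta_{\varepsilon,k+1}-\theta_{\varepsilon,k}$ and $\partial_\varepsilon\dot\theta_k=\partial_t\theta_{\varepsilon,k}$. This is the discrete analogue of the sentence ``the last term vanishes by equation~\eqref{horizontal_present}'' in the proof of Theorem~\ref{energy_smooth}.

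With $m$ frozen, the derivative becomes $n\int_0^T\sum_k[aA_k m_{k+1}\sin\Delta_{k+1}(\theta_{\varepsilon,k+1}-\theta_{\varepsilon,k}) + bB_k(\tfrac1n\partial_t\theta_{\varepsilon,k} - m_{k+1}\cos\Delta_{k+1}(\theta_{\varepsilon,k+1}-\theta_{\varepsilon,k}))]\,dt$. I would then (i) integrate the $\partial_t\theta_{\varepsilon,k}$ term by parts in $t$, the boundary terms vanishing because $\theta_{\varepsilon,k}=0$ at $t=0,T$ since the endpoints $\gamma_{1,k},\gamma_{2,k}$ are fixed, and (ii) perform the discrete summation-by-parts identity $\sum_k C_k(\theta_{\varepsilon,k+1}-\theta_{\varepsilon,k})=\sum_k(C_{k-1}-C_k)\theta_{\varepsilon,k}$, valid under the cyclic convention $\gamma_{n+1}=\gamma_1$, with $C_k = n\,m_{k+1}(aA_k\sin\Delta_{k+1}-bB_k\cos\Delta_{k+1})$. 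Reading off the coefficient of $\theta_{\varepsilon,k}$ identifies $\tfrac1n\xi_k = -b\dot B_k + C_{k-1}-C_k$.

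The final step is bookkeeping: expanding $-nb\dot B_k$ via $\dot B_k = \tfrac1n\ddot\theta_k - \dot m_{k+1}\sin\Delta_{k+1} - m_{k+1}\cos\Delta_{k+1}(\dot\theta_{k+1}-\dot\theta_k)$, and expanding $n(C_{k-1}-C_k)$ after substituting the definitions of $A$ and $B$, then collecting terms to reach~\eqref{xidef}. The expected main obstacle, beyond recognizing the envelope cancellation, is precisely this collection: the two expansions each contribute a term $\mp nb\,m_{k+1}\cos\Delta_{k+1}\dot\theta_k$, and these must cancel so that the surviving first-order-in-$\dot\theta$ terms collapse to $bn(m_{k+1}\cos\Delta_{k+1}\dot\theta_{k+1}-m_k\cos\Delta_k\dot\theta_{k-1})$, while the remaining quadratic-in-$m$ terms regroup into the $(b-a)$ and $a$ lines of~\eqref{xidef}. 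Checking these cancellations carefully, and tracking the powers of $n$ so that the discretization matches~\eqref{horizontal_present} in the limit, is where the computation demands care.
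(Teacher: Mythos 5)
Your proposal is correct and follows essentially the same route as the paper: the energy formula via the horizontal projection $P_h(\dot\gamma)_k = \dot\gamma_k - m_k\textrm{v}_k$, the observation that all $\partial_\varepsilon m_k$ contributions vanish because \eqref{vertical_discrete_projection} is precisely the stationarity condition of the quadratic form in $m$ (the paper phrases this as "the term multiplied by $g_k$ vanishes"), and then integration by parts in $t$ together with cyclic summation by parts in $k$ to isolate the coefficient of $\theta_{\varepsilon,k}$. Your explicit bookkeeping with $A_k$, $B_k$, $C_k$ and the identification $\tfrac{1}{n}\xi_k = -b\dot{B}_k + C_{k-1}-C_k$ checks out and reproduces \eqref{xidef}, including the cancellation of the $\mp nb\,m_{k+1}\cos\Delta_{k+1}\dot\theta_k$ terms.
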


\begin{proof}
By Theorem~\ref{horizontal_discrete_projection_thm},
 the horizontal projection of the velocity vector $w = \dot{\gamma}$ is given by
$P_h(w) =  w_k - m_k \textrm{v}_k$ where $m$ satisfies \eqref{vertical_discrete_projection} with $\phi_k = \dot{\theta}_k$. Hence the energy is
\begin{multline}
E(\varepsilon) = \tfrac{n}{2}  \int_0^T \sum_{k=1}^{n}a \langle w_{k+1}-w_k - m_{k+1}\textrm{v}_{k+1} + m_k \textrm{v}_k, \textrm{v}_k\rangle^2 \\
+ b \langle w_{k+1}-w_k - m_{k+1}\textrm{v}_{k+1} + m_k \textrm{v}_k, \textrm{n}_k\rangle^2 \, dt,
\end{multline}
which reduces to \eqref{energyepsilon}.

To compute the derivative of the energy functional, we first simplify \eqref{energyepsilon} by expanding and reindexing to obtain
\begin{multline*}
E(\varepsilon) = \tfrac{n}{2} \sum_{k=1}^n \int_0^T \Big( \tfrac{b}{n^2} \dot{\theta}_{k-1}^2 - 2\tfrac{b}{n} m_k \dot{\theta}_{k-1} \sin{\Delta_k} + b m_k^2 \sin^2{\Delta_k}
\\
+ a m_{k}^2 - 2 a m_{k-1} m_k \cos{\Delta_k} + a m_k^2 \cos^2{\Delta_k}\Big) \, dt.
\end{multline*}
Now let $\psi_k = \frac{\partial \theta_k}{\partial \varepsilon}\vert_{\varepsilon=0}$, $\nu_k = \psi_k - \psi_{k-1}$, and $g_k = \frac{\partial m_k}{\partial \varepsilon}\vert_{\varepsilon=0}$.
We then get
\begin{align*}
\frac{dE}{d\varepsilon}(0) &= n \sum_{k=1}^n \int_0^T \Big( \tfrac{b}{n^2} \dot{\theta}_{k-1} \dot{\psi}_{k-1} - \tfrac{b}{n} m_k \dot{\psi}_{k-1} \sin{\Delta_k} - \tfrac{b}{n} m_k \dot{\theta}_{k-1} \cos{\Delta_k} \nu_k \\
&\qquad\qquad + (b-a)m_k^2 \sin{\Delta_k}\cos{\Delta_k} \nu_k + am_{k-1}m_k \sin{\Delta_k} \nu_k
 \Big) \, dt \\
&\qquad\qquad + n \sum_{k=1}^n \int_0^T g_k \Big( -\frac{b}{n} \dot{\theta}_{k-1} \sin{\Delta_k} + b m_k \sin^2{\Delta_k} + a m_k \\
&\qquad\qquad -a m_{k-1} \cos{\Delta_k}- a m_{k+1} \cos{\Delta_{k+1}} + a m_k \cos^2{\Delta_k}\Big) \, dt.
\end{align*}
But notice that the term multiplied by $g_k$ vanishes since $m_k$ satisfies \eqref{vertical_discrete_projection}; hence it is not necessary to compute the variation $g_k$.
All that remains is to express every term in $\frac{dE}{d\varepsilon}(0)$ in terms of $\psi_k$ either by reindexing or integrating by parts in time, which is straightforward and leads to \eqref{xidef}.
\end{proof}

\subsection{Gradient of the discrete energy functional}
Let us compute the gradient of the discrete energy functional with respect to the quotient elastic metric $G^{a,b}$.
Considering equation~\eqref{xidef}, let us first compute $\dot{m}_k$.

\begin{lemma}\label{mdotlemma}
Let $\mathbb{G}$ denote the inverse matrix of  the matrix $\operatorname{T}$ in \eqref{T}, so that
\begin{equation}\label{m_j}
 m_j = \sum_{k=1}^n \mathbb{G}_{jk} \frac{b}{n} \phi_{k-1} \sin{\Delta_k} \qquad \text{for all $j$},
 \end{equation}
where $\Delta_k = \theta_k-\theta_{k-1}$ for some angles $\theta_k$. If $\theta_k(t)$ depends on time and its derivative is denoted by $\phi_k(t) = \dot{\theta}_k(t)$, then
we have the formula
\begin{multline}\label{mdotformula}
\dot{m_j} = \sum_{k=1}^n \mathbb{G}_{jk} \Big( \frac{b}{n}\sin{\Delta_k} \ddot{\theta}_{k-1} + \frac{b}{n} \cos{\Delta_k} \dot{\theta}_{k-1} \dot{\Delta}_k
+ 2(a-b) \sin{\Delta_k}\cos{\Delta_k} m_k \dot{\Delta}_k \\
- a \sin{\Delta_k} m_{k-1} \dot{\Delta}_k - a \sin{\Delta_{k+1}} m_{k+1} \dot{\Delta}_{k+1}\Big).
\end{multline}
\end{lemma}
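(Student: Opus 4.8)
The plan is to differentiate the \emph{defining linear system} for $m$ rather than the explicit Green's-function formula \eqref{m_j}. By Remark~\ref{stability}, the numbers $m_j$ are characterized as the solution of $\operatorname{T}\,m = \tfrac{b}{n} R$, where $\operatorname{T}$ is the cyclic tridiagonal matrix \eqref{T} and $R$ is the vector with entries $R_k = \sin\Delta_k\,\phi_{k-1}$; since $\mathbb{G} = \operatorname{T}^{-1}$, this is exactly \eqref{m_j}. Differentiating in time with the product rule gives $\dot{\operatorname{T}}\,m + \operatorname{T}\,\dot m = \tfrac{b}{n}\dot R$, so that
\[
\dot m = \mathbb{G}\left(\tfrac{b}{n}\dot R - \dot{\operatorname{T}}\,m\right),
\]
and component $j$ of this identity is what I must match to \eqref{mdotformula}.

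It then remains to compute the two entrywise contributions. For the first, using $\phi_{k-1} = \dot\theta_{k-1}$, I get $\tfrac{b}{n}\dot R_k = \tfrac{b}{n}\bigl(\sin\Delta_k\,\ddot\theta_{k-1} + \cos\Delta_k\,\dot\theta_{k-1}\,\dot\Delta_k\bigr)$, which reproduces the first two terms inside the sum in \eqref{mdotformula}. For the second, recalling that $\operatorname{T}$ has diagonal entries $d_k = a + a\cos^2\Delta_k + b\sin^2\Delta_k$ and off-diagonal entries $\tau_k = -a\cos\Delta_k$, I differentiate to obtain $\dot d_k = 2(b-a)\sin\Delta_k\cos\Delta_k\,\dot\Delta_k$ and $\dot\tau_k = a\sin\Delta_k\,\dot\Delta_k$. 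Writing $(\dot{\operatorname{T}}\,m)_k = \dot\tau_k\,m_{k-1} + \dot d_k\,m_k + \dot\tau_{k+1}\,m_{k+1}$ and negating yields precisely the remaining three terms $2(a-b)\sin\Delta_k\cos\Delta_k\,m_k\,\dot\Delta_k - a\sin\Delta_k\,m_{k-1}\,\dot\Delta_k - a\sin\Delta_{k+1}\,m_{k+1}\,\dot\Delta_{k+1}$. Summing against $\mathbb{G}_{jk}$ then gives \eqref{mdotformula}.

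The only real care needed is the bookkeeping of the cyclic indices in $\operatorname{T}$: one must check that in row $k$ the subdiagonal entry is $\tau_k$ and the superdiagonal entry is $\tau_{k+1}$ (with indices read modulo $n$, so that the corner entries $\operatorname{T}_{1,n}$ and $\operatorname{T}_{n,1}$ are correctly $\tau_1$), which is exactly the coupling of $m_{k-1}, m_k, m_{k+1}$ appearing in the defining equation \eqref{vertical_discrete_projection}. Since $\dot{\operatorname{T}}$ inherits the same sparsity pattern as $\operatorname{T}$, no new off-band terms appear, and this is the step where an indexing slip would most easily creep in. Everything else is a routine application of the chain rule.
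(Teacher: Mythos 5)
Your proposal is correct and follows essentially the same route as the paper: the paper also differentiates the defining relation \eqref{vertical_discrete_projection} in time, observes that the terms involving $\dot m_k$ assemble into the same matrix $\operatorname{T}$, and hence applies $\mathbb{G}=\operatorname{T}^{-1}$ to the remaining terms. Your matrix-form bookkeeping $\operatorname{T}\dot m = \tfrac{b}{n}\dot R - \dot{\operatorname{T}}m$ is just a cleaner packaging of the paper's componentwise computation, and your entrywise derivatives of $d_k$ and $\tau_k$ reproduce \eqref{mdotformula} exactly.
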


\begin{proof}
We just compute the time derivative of each term of equation \eqref{vertical_discrete_projection} and notice that the terms involving $\dot{m}_k$ are
$$ b \sin^2{\Delta_k} \dot{m}_k + a\dot{m}_k + a\cos^2{\Delta_k} \dot{m}_k - a\cos{\Delta_k}\dot{m}_{k-1} - a\cos{\Delta_{k+1}} \dot{m}_{k+1}.$$
Hence we need to invert the same matrix $\operatorname{T}$ to solve for $\dot{m}_k$ as we do to solve for $m_k$. The remainder is straightforward.
\end{proof}

Finally let us rewrite the $l^2$-product in \eqref{xidef} as an $G^{a,b}$-inner product, analogously to Theorem~\ref{magnitude2}.
\begin{proposition}\label{gradient_discrete_prop}
Let $w$ and $z$ be two vector fields along $\gamma$  with $n(w_{k+1}-w_k) = \alpha_k \operatorname{n}_k$ and $n(z_{k+1}-z_k) = \beta_k \operatorname{n}_k$ for some numbers $\alpha_k$ and $\beta_k$.
Consider the equation
\begin{equation}\label{l2product}
G^{a,b}(P_h(w), P_h(z)) =  \sum_{k=1}^n \frac{1}{n}\alpha_k \xi_k
\end{equation}
for some numbers $\xi_k$.
Then
\begin{align}
\beta_k &= \frac{1}{b}\xi_k + n h_{k+1} \sin{\Delta_{k+1}},  \label{grad_discrete_1}
\end{align}
where the sequence $h_k$ satisfies
\begin{align}\label{equation_hk}
\tfrac{1}{n}  \xi_{k-1} \sin{\Delta_k} = (a + a \cos^2{\Delta_k}) h_k - a\cos{\Delta_k} h_{k-1} - a\cos{\Delta_{k+1}} h_{k+1}.
\end{align}
\end{proposition}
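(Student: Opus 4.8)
The plan is to reproduce, in the discrete setting, the derivation of the smooth identity \eqref{hk}, using the orthogonal splitting supplied by Theorem~\ref{horizontal_discrete_projection_thm}. Write $P_h(w) = w - m\operatorname{v}$ and $P_h(z) = z - p\operatorname{v}$, where $m$ and $p$ are the sequences solving \eqref{vertical_discrete_projection} with $\phi = \alpha$ and $\phi = \beta$ respectively. The first move is to collapse the bilinear form by orthogonality, applied in the order that is most convenient: since $P_h(z)$ is horizontal and $m\operatorname{v}$ is vertical, $G^{a,b}(m\operatorname{v},P_h(z))=0$, so $G^{a,b}(P_h(w),P_h(z)) = G^{a,b}(w,P_h(z)) = G^{a,b}(w,z) - G^{a,b}(w,p\operatorname{v})$.

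The two surviving terms are read off directly from \eqref{elastic_discrete_product}, and the crucial simplification is that the \emph{test} field $w$ preserves the arc-length parameterization, so $\langle w_{k+1}-w_k,\operatorname{v}_k\rangle = 0$ and the $a$-term is annihilated in both. For the first term this is exactly \eqref{elastic_discrete_product2}, giving $G^{a,b}(w,z) = \tfrac{b}{n}\sum_{k=1}^n \alpha_k\beta_k$. For the second, computing $\langle (p\operatorname{v})_{k+1}-(p\operatorname{v})_k,\operatorname{n}_k\rangle = p_{k+1}\sin\Delta_{k+1}$ yields $G^{a,b}(w,p\operatorname{v}) = b\sum_{k=1}^n \alpha_k\, p_{k+1}\sin\Delta_{k+1}$. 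Combining the two, $G^{a,b}(P_h(w),P_h(z)) = \sum_{k=1}^n \tfrac{1}{n}\alpha_k\, b(\beta_k - n\, p_{k+1}\sin\Delta_{k+1})$. Because this holds for every choice of the test field $w$, i.e. for every sequence $\alpha_k$, matching with \eqref{l2product} forces $\xi_k = b(\beta_k - n\, p_{k+1}\sin\Delta_{k+1})$; solving for $\beta_k$ and setting $h := p$ gives \eqref{grad_discrete_1}.

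It then remains to identify the recurrence satisfied by $h = p$. By construction $p$ already obeys \eqref{vertical_discrete_projection} with $\phi = \beta$, namely $(a + a\cos^2\Delta_k + b\sin^2\Delta_k)p_k - a\cos\Delta_k\, p_{k-1} - a\cos\Delta_{k+1}\, p_{k+1} = \tfrac{b}{n}\sin\Delta_k\, \beta_{k-1}$. I would substitute into the right-hand side the reindexed form of \eqref{grad_discrete_1}, $\beta_{k-1} = \tfrac{1}{b}\xi_{k-1} + n\, p_k\sin\Delta_k$, turning it into $\tfrac{1}{n}\sin\Delta_k\,\xi_{k-1} + b\sin^2\Delta_k\, p_k$. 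The term $b\sin^2\Delta_k\, p_k$ then cancels precisely the $b\sin^2\Delta_k$ contribution inside the diagonal entry $d_k$ of the matrix $\operatorname{T}$ in \eqref{T}, and what survives is exactly \eqref{equation_hk}.

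The step I expect to carry the weight is the opening orthogonality collapse: choosing to expand against $w$ rather than $z$ is what lets the arc-length constraint kill the $a$-term outright, and this is the discrete replacement for the ``integrate by parts and invoke the $m$-equation'' maneuver of the smooth proof, i.e. the source of the cancellation advertised before Theorem~\ref{magnitude2}. The concluding cancellation of the $b\sin^2\Delta_k$ terms is the faithful discrete analogue of the smooth simplification $-ah'' + b\kappa^2 h = b\kappa\beta \Rightarrow -ah'' = \kappa\xi$, and it reveals that the auxiliary sequence $h$ of the statement is nothing more than the vertical component $p$ of the gradient direction $z$. Under the cyclic (periodic) conventions of this section $\operatorname{T}$ is the symmetric matrix \eqref{T}, so the reindexing produces no boundary contributions; for open chains one must in addition account for the endpoint terms, which vanish under the conditions $h(0)=h(1)=0$.
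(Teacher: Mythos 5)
Your proof is correct and takes essentially the same route as the paper's: the orthogonality collapse $G^{a,b}(P_h(w),P_h(z)) = G^{a,b}(w,P_h(z))$, the use of \eqref{elastic_discrete_product2} (arc-length preservation of $w$ killing the $a$-terms) to read off $\xi_k = b(\beta_k - n h_{k+1}\sin\Delta_{k+1})$, and back-substitution of \eqref{grad_discrete_1} into the projection equation \eqref{vertical_discrete_projection} satisfied by the vertical component of $P_h(z)$, where the $b\sin^2\Delta_k$ terms cancel to yield \eqref{equation_hk}. Your sequence $p$ is exactly the paper's $h$ from the outset, so splitting $G^{a,b}(w,P_h(z))$ into $G^{a,b}(w,z)-G^{a,b}(w,p\operatorname{v})$ is only a cosmetic rearrangement of the same computation.
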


\begin{remark}
Note that equation~\eqref{equation_hk} can be written as
\begin{equation*}
\tfrac{1}{a}
\left(\begin{smallmatrix}
\sin \Delta_1 \xi_n\\ \sin \Delta_2 \xi_1\\ \sin \Delta_3\xi_2\\  \vdots \\ \sin\Delta_{n-2}\xi_{n-3}\\ \sin \Delta_{n-1}\xi_{n-2} \\ \sin\Delta_n\xi_{n-1}
\end{smallmatrix}\right)
=
\operatorname{M}
\left(\begin{smallmatrix}
h_1 \\ h_2 \\ h_3\\  \vdots \\ h_{n-2}\\h_{n-1}\\ h_n
\end{smallmatrix}\right),
\end{equation*}
where  $\operatorname{M}$ is the following cyclic tridiagonal matrix
\begin{equation}\label{M}
\operatorname{M} =
n \left(\begin{smallmatrix}
\delta_1& t_2 & 0&  \cdots &0 & 0& t_1\\
t_2 & \delta_2 & t_3 & \cdots & 0 & 0& 0 \\
0 & t_3 & \delta_3 & \cdots & 0 & 0 & 0\\
\vdots & \vdots & \vdots & \cdots & \vdots & \vdots & \vdots\\
0 & 0 & 0 & \cdots & t_{n-1} & \delta_{n-1} & t_n\\
t_1& 0 & 0 & \cdots & 0 & t_n & \delta_n
\end{smallmatrix}\right).
\end{equation}
where $\delta_k = 1+ \cos^2(\Delta_k)$ and where $t_k = -\cos(\Delta_k)$. Note that again, $M$ is strictly dominant as soon as $-\frac{3}{4}<\cos\Delta_{k+1}$ (see remark \ref{stability}).
\end{remark}

\begin{proof}
First of all, we have
$ G^{a,b}(P_h(w),P_h(z)) = G^{a,b}(w, P_h(z))$, since the projection $P_h$ is orthogonal with respect to $G^{a, b}$.
Since the vector field $z$ satisfies  $n(z_{k+1}-z_k) = \beta_k \operatorname{n}_k$, by Theorem~\ref{horizontal_discrete_projection_thm}, its horizontal projection reads
$$
P_h(z) = z_k -  h_k\operatorname{v}_k,
$$
where
$h_k$ is the solution of
\begin{equation}\label{midstep}
\tfrac{b}{n}  \beta_{k-1} \sin{\Delta_k} - b h_k \sin^2{\Delta_k} = a\big(h_k + \cos^2{\Delta_k} h_k - \cos{\Delta_k} h_{k-1} - \cos{\Delta_{k+1}} h_{k+1}\big).
\end{equation}
Using the expression of the $G^{a, b}$-inner product given in \eqref{elastic_discrete_product2}, it follows that
$$
\begin{array}{rl}
 G^{a,b}(w, P_h(z)) &= n\sum_{k=1}^n \frac{b}{n}\alpha_k\langle (z_{k+1} -  h_{k+1}\operatorname{v}_{k+1})-(z_k -  h_k\operatorname{v}_k), \textrm{n}_k\rangle\\  & \\&= n\sum_{k=1}^n b\frac{\alpha_k}{n} (\frac{\beta_k}{n} - h_{k+1} \sin\Delta_{k+1}),
 \end{array}
$$
where we have used $n(z_{k+1} - z_{k}) = \beta_k \operatorname{n}_k$ and $\langle \operatorname{v}_{k+1}, \operatorname{n}_k\rangle = \sin \Delta_{k+1}$.
Comparing with equation~\eqref{l2product}, it follows that
$$
\frac{1}{n} \xi_k =  \frac{b}{n} (\beta_k - n h_{k+1}\sin\Delta_{k+1}).
$$
Therefore equation~\eqref{midstep} reads
$$
\frac{1}{n}\sin{\Delta_k} \xi_{k-1}  = a \big(h_k + \cos^2{\Delta_k} h_k - \cos{\Delta_k} h_{k-1} - \cos{\Delta_{k+1}} h_{k+1}\big).
$$
\end{proof}

Let us summarize the previous results in the following Theorem.
\begin{theorem}
Suppose we have a family of curves $\gamma_k(\varepsilon, t)$ depending on time and joining fixed curves $\gamma_{1,k}$ and $\gamma_{2,k}$
(which is to say that $\gamma_k(\varepsilon, 0) = \gamma_{1,k}$ and $\gamma_k(\varepsilon, T) = \gamma_{2,k}$ for all $\varepsilon$ and $k$).
Then the derivative of the energy functional $E$ associated with the quotient elastic metric $G^{a,b}$ reads
$$
\frac{dE}{d\varepsilon}(0) = \int_{0}^{T} G^{a,b}({\gamma}_\varepsilon,\grad E(\gamma)) dt,
$$
where $\grad E(\gamma) = (z_k : 1\leq k \leq n)$ is the solution of $n(z_{k+1} - z_k) = \beta_k \operatorname{n}_k$ with $\beta_{k}$ solving \eqref{grad_discrete_1} for $\xi_k $
defined by \eqref{xidef}. Since we consider curves modulo translation, we can take $z_0 = 0$. The projection of $\grad E(\gamma)$ on the manifold of closed curves reads $$\left(z_k -\frac{1}{n}\sum_{k=1}^n \beta_k \operatorname{n}_k : 1\leq k \leq n\right).$$
\end{theorem}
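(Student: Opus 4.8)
The plan is to assemble the two preceding results---Theorem~\ref{energy_discrete_derivative} and Proposition~\ref{gradient_discrete_prop}---rather than to compute anything new. First I would identify the variation field $\gamma_\varepsilon$ with the field $w$ of Proposition~\ref{gradient_discrete_prop}. Since the whole variation preserves the arc-length parameterization, the notation fixed before Theorem~\ref{energy_discrete_derivative} gives $n\big(\gamma_{\varepsilon,k+1}(0,t)-\gamma_{\varepsilon,k}(0,t)\big)=\theta_{\varepsilon,k}\operatorname{n}_k$, so $\gamma_\varepsilon$ is exactly a field of the type considered there, with $\alpha_k=\theta_{\varepsilon,k}$.

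Next, Theorem~\ref{energy_discrete_derivative} expresses the derivative of the energy as the $l^2$-pairing
$$\frac{dE}{d\varepsilon}(0)=\int_0^T\frac1n\sum_{k=1}^n\theta_{\varepsilon,k}(0,t)\,\xi_k(t)\,dt=\int_0^T\frac1n\sum_{k=1}^n\alpha_k\,\xi_k(t)\,dt,$$
with $\xi_k$ given by \eqref{xidef}. I would then feed these $\xi_k$ into Proposition~\ref{gradient_discrete_prop}: solving the cyclic tridiagonal system \eqref{equation_hk} for $h_k$ and setting $\beta_k=\tfrac1b\xi_k+nh_{k+1}\sin\Delta_{k+1}$ produces a field $z$ with $n(z_{k+1}-z_k)=\beta_k\operatorname{n}_k$ for which the proposition guarantees $\tfrac1n\sum_k\alpha_k\xi_k=G^{a,b}(P_h(\gamma_\varepsilon),P_h(z))$. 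Because $P_h(z)$ is horizontal while the vertical part of $\gamma_\varepsilon$ is $G^{a,b}$-orthogonal to it, the left slot need not be projected, and integrating in $t$ yields $\frac{dE}{d\varepsilon}(0)=\int_0^TG^{a,b}(\gamma_\varepsilon,\grad E)\,dt$ with $\grad E$ represented by $z$; this is the asserted identity once $G^{a,b}$ in the integrand is read as the induced quotient metric.

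The remaining two points concern the ambiguities coming from working modulo translations and from the closedness constraint. The recursion $n(z_{k+1}-z_k)=\beta_k\operatorname{n}_k$ determines $z$ only up to an additive constant, i.e.\ up to a global translation; exactly as in the smooth setting (see the Remark following Theorem~\ref{magnitude2}) I would fix this freedom by the normalisation $z_0=0$. Finally, the telescoping identity $z_{n+1}-z_1=\tfrac1n\sum_k\beta_k\operatorname{n}_k$ shows that $z$ fails to be tangent to the manifold of closed chains unless $\sum_k\beta_k\operatorname{n}_k=0$; since the closed chains form a codimension-two subspace, I would orthogonally project by subtracting the mean displacement, which reconstructs the stated field $\big(z_k-\tfrac1n\sum_k\beta_k\operatorname{n}_k\big)$ and is the discrete counterpart of the projection $\grad E(s)\mapsto\grad E(s)-s\int_0^1\grad E(x)\,dx$ used in the smooth case.

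I expect the only genuinely delicate point to be the bookkeeping of horizontal versus vertical components in the middle paragraph: one must remember that $\gamma_\varepsilon$ itself is \emph{not} horizontal, and that it is precisely the orthogonality identity $G^{a,b}(P_h(\gamma_\varepsilon),P_h(z))=G^{a,b}(\gamma_\varepsilon,P_h(z))$---valid because $P_h$ is a $G^{a,b}$-orthogonal projection---that licenses pairing directly against $\gamma_\varepsilon$ and hence reading $z$ off as the gradient.
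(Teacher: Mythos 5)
Your proposal is correct and follows exactly the route the paper intends: the paper states this theorem with no proof at all (``Let us summarize the previous results in the following Theorem''), and its content is precisely the assembly of Theorem~\ref{energy_discrete_derivative} with Proposition~\ref{gradient_discrete_prop} that you carry out, including the key orthogonality identity $G^{a,b}(P_h(w),P_h(z))=G^{a,b}(w,P_h(z))$, which the paper itself invokes inside the proof of Proposition~\ref{gradient_discrete_prop}. Your handling of the translation normalisation and of the closedness projection likewise matches the paper's Remark after Theorem~\ref{magnitude2} and Algorithm~\ref{compute_gradient}, so your write-up simply makes explicit the argument the paper leaves implicit.
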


\section{Two-boundary problem}\label{implementation}
\subsection{Algorithms for the two-boundary problem}\label{algo}

Given two shapes in the plane, solving the two-boundary problem consists in finding a geodesic (if it exists!) having these shapes as endpoints. A geodesic is a path that is locally length-minimizing.
Using the exact expression of the gradient of the energy functional, we can obtain approximations of geodesics by a path-straightening method. This method relates to the fact that critical points of the energy are geodesics, and it consists of straightening an initial path between two given shapes in the plane by following the opposite of the gradient flow of the energy functional (see Section~\ref{algo}, Algorithm~\ref{path_straightening}).
The algorithm for the computation of the gradient of the energy functional, based on the computation given in previous sections,  is detailed in Section~\ref{algo}, Algorithm~\ref{compute_gradient}.
Of course the efficiency of the path-straightening method depends greatly on the landscape created by the energy functional on the space of paths connecting two shapes, and this landscape in turns varies with the parameters $a$ and $b$ of the elastic metric.
In Section~\ref{Energy_landscape}, we illustrate some aspects of this dependence. In all the numerics presented in the paper we used $100$ points for each curve. 
\begin{figure}[!ht]
 		\centering
		\includegraphics[width = 12cm]{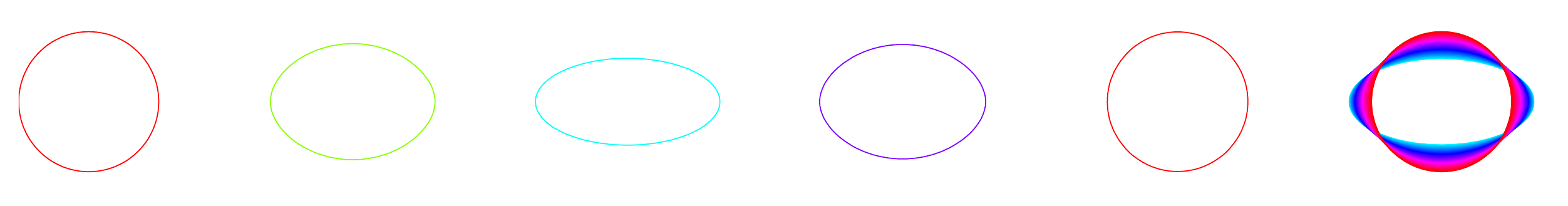}
\caption{Toy example: initial path joining a circle to the same circle via an ellipse. The 5 first shapes at the left correspond to the path at time $t = 0$, $t = 0.25$, $t = 0.5$, $t = 0.75$ and $t = 1$.
The right picture shows the entire path, with color varying from red ($t=0$) to blue ($t = 0.5$) to red again ($t=1$).
}
 		\label{initial_path_circle_ellipse}		
\end{figure}
\begin{figure}[!ht]
 		\centering
		\includegraphics[height = 3.8cm]{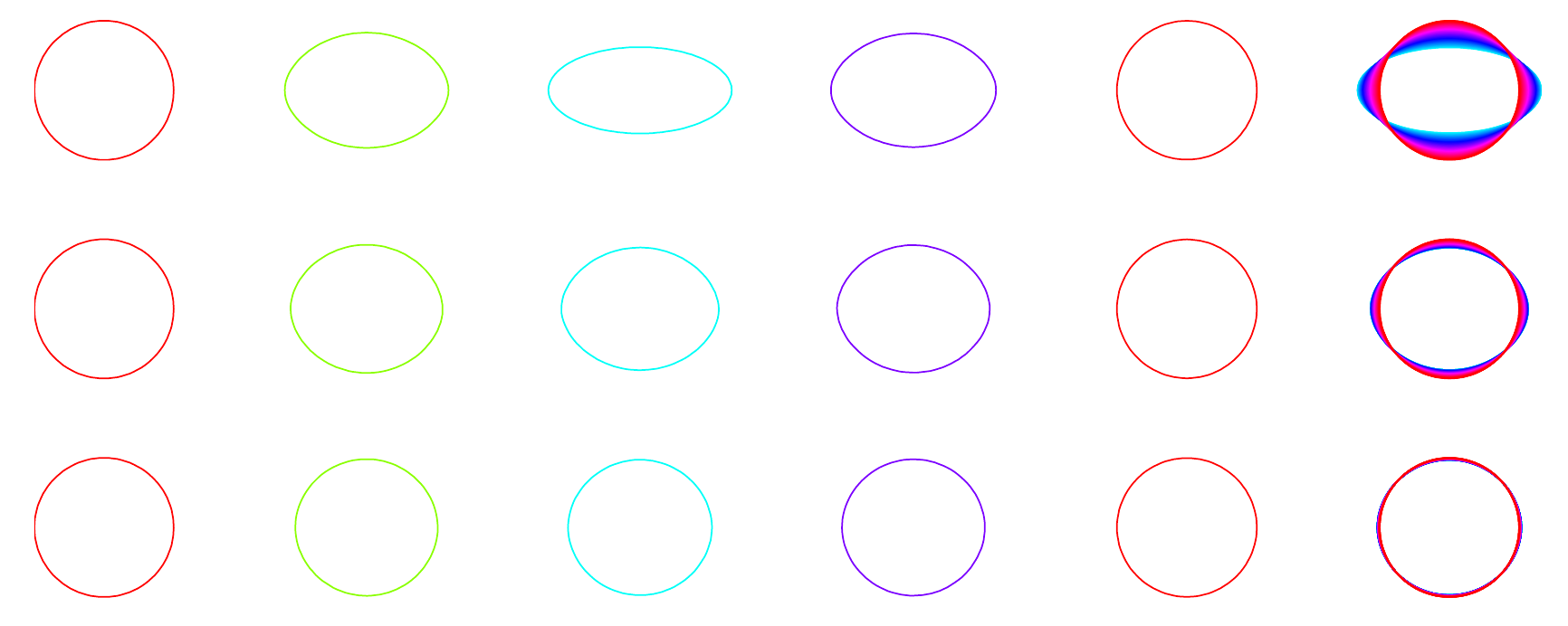}
\fontsize{10}{12}\selectfont
		\includegraphics[height = 3.8cm]{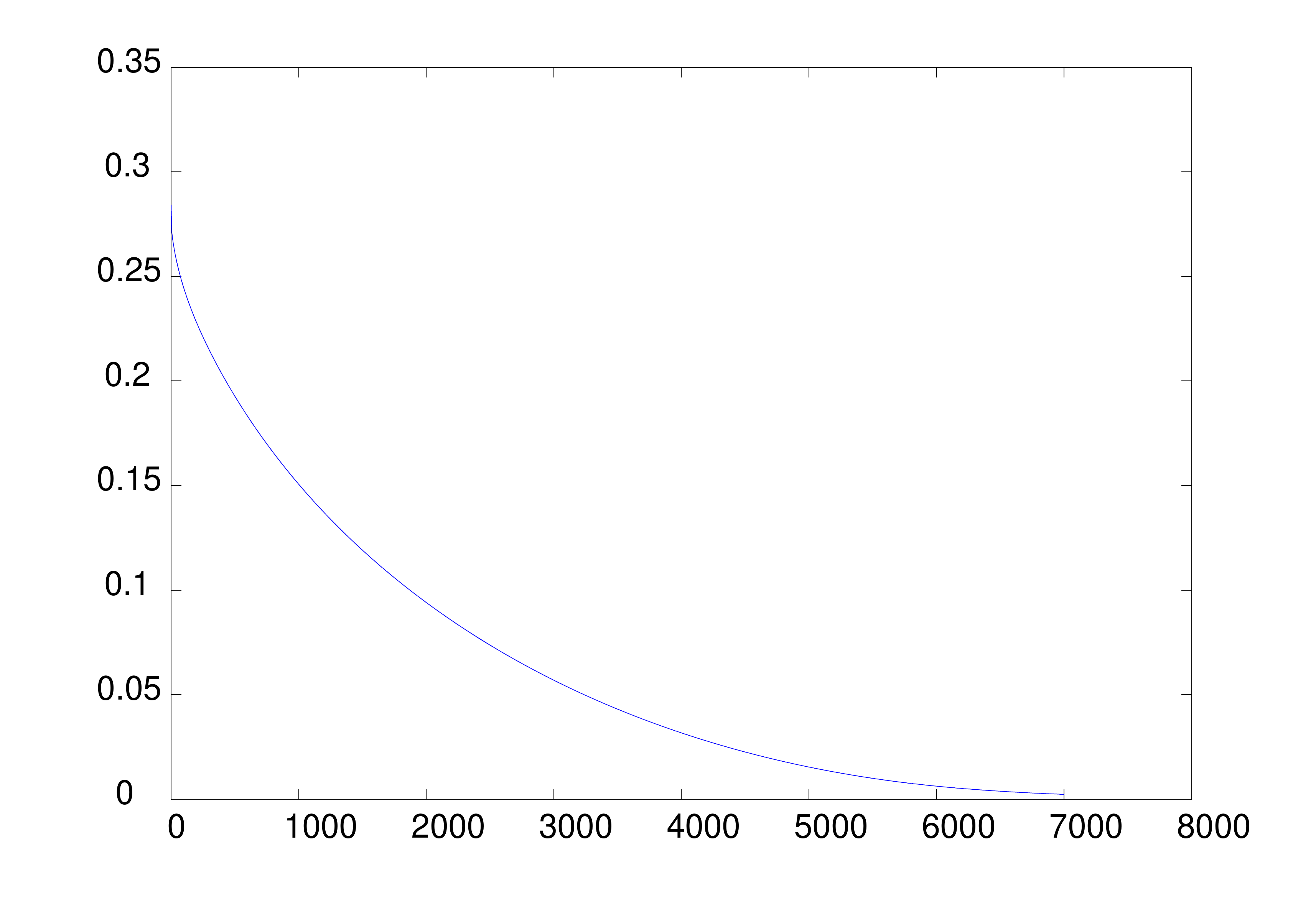}
		 		\caption{Straightening of the path illustrated in Fig.~\ref{initial_path_circle_ellipse}, with $a=100$ and $b=1$. The first line corresponds to the initial path, the second line to the path after 3500 iterations, and the third line corresponds to the path after 7000 iterations. Underneath, the evolution of the energy with respect to the number of iterations is depicted.}
 		\label{circle_ellipse_c100}		
 \normalsize
\end{figure}

\begin{algorithm}
\begin{small}
\KwIn{\begin{enumerate}
\item An initial shape $\gamma_1$ given by the positions $\gamma_{k,1}, 1\leq k \leq n$ of $n$ points in $\mathbb{R}^2$,
\item A final shape $\gamma_2$ given by the positions $\gamma_{k,2}, 1\leq k \leq n$ of $n$ points in $\mathbb{R}^2$.
\end{enumerate}
}
\KwOut{
An (approximation of a) geodesic between $\gamma_1 $ and $\gamma_2$ under the quotient elastic metric $G^{a, b}$, given by the positions $\gamma_{k}(t), 1\leq k \leq n$ of $n$ points in $\mathbb{R}^2$, with $\gamma_k(0) = \gamma_{k,1}$ and $\gamma_k(1) = \gamma_{k, 2}.$
}
\vspace{0.3cm}
\textbf{Algorithm~1:} Initialize $\gamma_k(t)$ by  a path connecting $\gamma_1$ to $\gamma_2$.
\begin{enumerate}
\item compute $\grad E(\gamma)$ using Algorithm~\ref{compute_gradient}.
\item while $\grad E(\gamma)<10^{-3}$ do
\begin{enumerate}
\vspace{0.05cm}
\item $\gamma_k(t) \leftarrow \gamma_k(t) - \delta \grad E(\gamma)$ where $\delta$ is a small parameter to be adjusted (we\break  used $\delta = 10^{-9}$).
\vspace{0.1cm}
\item Compute the length $L(\gamma)$ of $\gamma_k(t)$ and do $\gamma_k(t) \leftarrow \gamma_k(t)/L(\gamma)$. 
\end{enumerate}
\end{enumerate}
\end{small}\vspace*{4pt}
\caption{Algorithm for the path-straightening method}
\label{path_straightening}
\end{algorithm}

\begin{algorithm}
\begin{small}
\KwIn{positions $\gamma_k(t), 1\leq k \leq n$ of $n$ points in $\mathbb{R}^2$ depending on time $t\in I$.
}
\vspace{0.1cm}
\KwOut{$n$ vectors $z_k = \grad E_k(t), 1\leq k \leq n$ in $\mathbb{R}^3$, depending on time $t\in I$, corresponding to the values of the gradient of the $G^{a, b}$-energy of $\gamma_k(t)$.}
\vspace{0.1cm}
\textbf{Algorithm~2:}
\begin{enumerate}
\item compute  $
\theta_k$ defined by
$$(\cos\theta_k(t), \sin\theta_k(t)) = n(\gamma_{k+1}(t) - \gamma_{k}(t))/\lvert\gamma_{k+1}(t) - \gamma_{k}(t))\rvert,$$
 then compute $\dot{\theta}_k$ and $\Delta_k =\theta_{k+1}-\theta_k$.
\item define $T$ as in equation~\eqref{T} and compute $(m_k, 1\leq k\leq n)$ defined by:
$$
\operatorname{T}\left(\begin{smallmatrix}
m_1 \\m_2  \\m_3 \\ \vdots \\m_{n-1} \\m_{n}
\end{smallmatrix}\right) = \frac{b}{n}\left(\begin{smallmatrix}
\dot{\theta}_n  \sin \Delta_1  \\  \dot{\theta}_1 \sin \Delta_2 \\ \dot{\theta}_2 \sin \Delta_3 \\ \vdots \\  \dot{\theta}_{n-2} \sin \Delta_{n-1}\\ \dot{\theta}_{n-1} \sin \Delta_n
\end{smallmatrix}\right).
$$
\item compute $\ddot{\theta}_k$ and $\dot{\Delta}_k$ as well as
$$
\begin{array}{rl}R_k  = & \Big( \frac{b}{n}\sin{\Delta_k} \ddot{\theta}_{k-1} + \frac{b}{n} \cos{\Delta_k} \dot{\theta}_{k-1} \dot{\Delta}_k
+ 2(a-b) \sin{\Delta_k}\cos{\Delta_k} m_k \dot{\Delta}_k \\ &
- a \sin{\Delta_k} m_{k-1} \dot{\Delta}_k - a \sin{\Delta_{k+1}} m_{k+1} \dot{\Delta}_{k+1}\Big).\end{array}
$$
\item compute $\dot{m}_k$ defined by equation \eqref{mdotformula}: $\operatorname{T} \dot{m} =  R.$
\item compute $\xi_k $ defined by equation~\eqref{xidef}:
\begin{multline*}
\xi_k = -b \ddot{\theta}_k +  b n (\dot{m}_{k+1} \sin{\Delta_{k+1}}  + m_{k+1} \cos \Delta_{k+1} \dot{\theta}_{k+1} - m_k \cos \Delta_k \dot{\theta}_{k-1})\\
+ n^2 (b-a) (m_k^2 \sin{\Delta_k} \cos{\Delta_k} -  m_{k+1}^2 \sin{\Delta_{k+1}} \cos{\Delta_{k+1}}) \\
+ a n^2 m_k (m_{k-1} \sin{\Delta_k} - m_{k+1} \sin{\Delta_{k+1}}).
\end{multline*}
\item define matrix $\operatorname{M}$ by equation~\eqref{M} and compute $h_k$  defined by:
$$
\operatorname{M}\left(\begin{smallmatrix}
h_1 \\ h_2 \\ h_3\\  \vdots \\ h_{n-2}\\h_{n-1}\\ h_n
\end{smallmatrix}\right) =
\tfrac{1}{a}
\left(\begin{smallmatrix}
\sin \Delta_1 \xi_n\\ \sin \Delta_2 \xi_1\\ \sin \Delta_3\xi_2\\  \vdots \\ \sin\Delta_{n-2}\xi_{n-3}\\ \sin \Delta_{n-1}\xi_{n-2} \\ \sin\Delta_n\xi_{n-1}
\end{smallmatrix}\right).
$$
\item compute $\beta_k$ defined by equation~\eqref{grad_discrete_1}:
$
\beta_k =\frac{1}{b} \xi_k + n h_{k+1} \sin{\Delta_{k+1}}.
$
\item compute $z_k$ defined by $z_1=0$ and $z_{k+1}  =  z_k + \frac{1}{n}\beta_k \operatorname{n}_k-\frac{1}{n}\sum_{k=1}^n\beta_k \operatorname{n}_k$.

\end{enumerate}
\end{small}\vspace*{4pt}
\caption{Algorithm for the computation of the gradient of the energy functional}
\label{compute_gradient}
\end{algorithm}

\subsection{Energy landscape}\label{Energy_landscape}

In order to experience the range of convergence of the path-straightening algorithm, we first start with a toy example, namely we start with an initial path joining a circle to the same circle but passing by an ellipse in the middle of the path. This path is illustrated in Fig.~\ref{initial_path_circle_ellipse}, where the middle ellipse may by replaced by an ellipse with different eccentricity.
Starting with this initial path, we expect the path-straightening method to straighten it into the constant path containing only circles, which is a geodesic. However, this will happen only if the initial path is in the attraction basin of the constant path, in the sense of dynamical systems, i.e., if the initial path is close enough to the constant geodesic. This in turn will depend on the value of the parameter $a/b$ of the elastic metric. In particular the same path can be in the attraction basin of the constant path for some value of $a/b$ and outside of it for some other value of the parameter.
\begin{figure}[!ht]
 		\centering
		\includegraphics[width = 12cm]{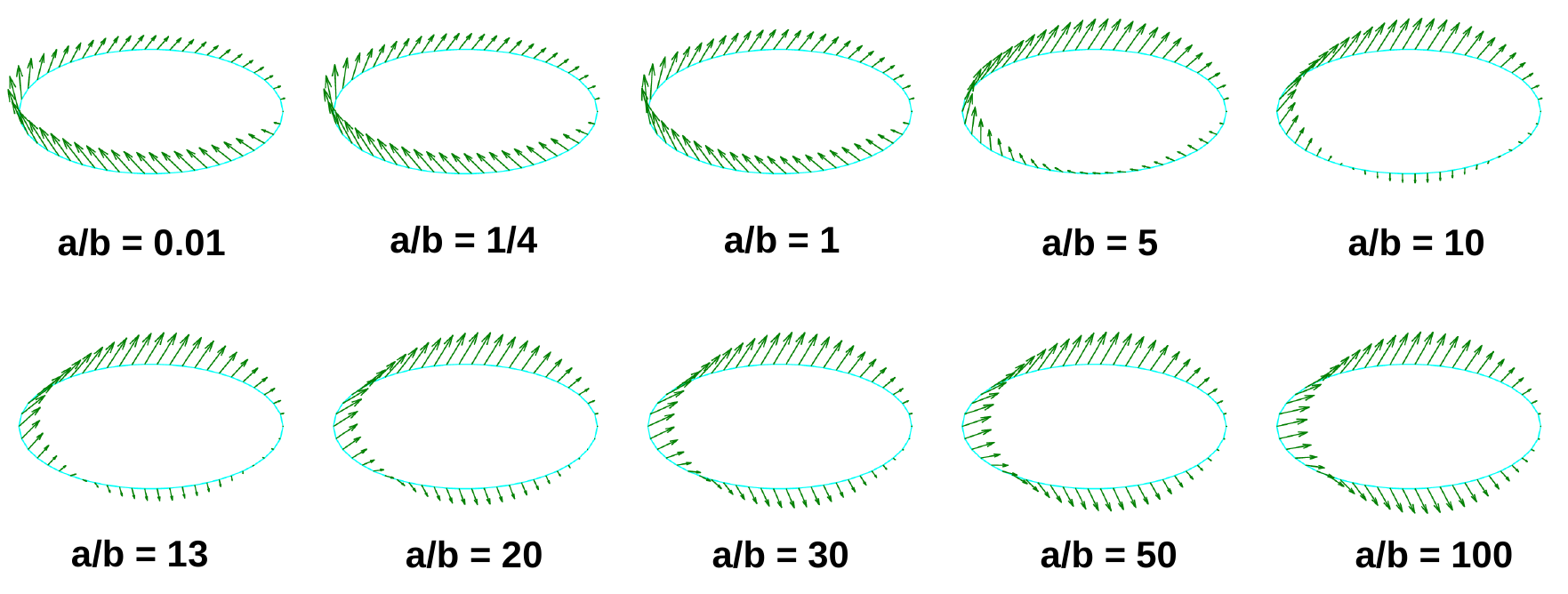}

 		\caption{Negative gradient of the energy functional at the middle of the path depicted in Fig.~\ref{initial_path_circle_ellipse} for $b=1$ and different values of the parameter $a/b$. }
 		\label{change_grad}		
\end{figure}
In order to have a better idea when the path-straightening method will converge, we plot in Fig.~\ref{change_grad} the opposite of the gradient of the energy functional at the middle of the path for different values of the parameter $a/b$. In this figure, the magnitude of the gradient is rescaled, hence the only important information is the directions taken by the vector field. For $a/b = 100$, the opposite of the gradient is the vector field that one expects for turning the ellipse into a circle. On the contrary, for $a/b = 0.01$, the opposite of the gradient is not bowing the ellipse. In other words, one can conjecture that the initial path depicted in Fig.~\ref{initial_path_circle_ellipse} is in the attraction basin of the constant path for $a/b = 100$, but not for $a/b = 0.01$. This is indeed what is happening, the path-straightening algorithm applied to the path of Fig.~\ref{initial_path_circle_ellipse} converges for $a/b = 100$ (see Fig.~\ref{circle_ellipse_c100}) but diverges/ for $a/b = 0.01$.

\begin{figure}[!ht]
 		\centering

		\includegraphics[width = 12cm]{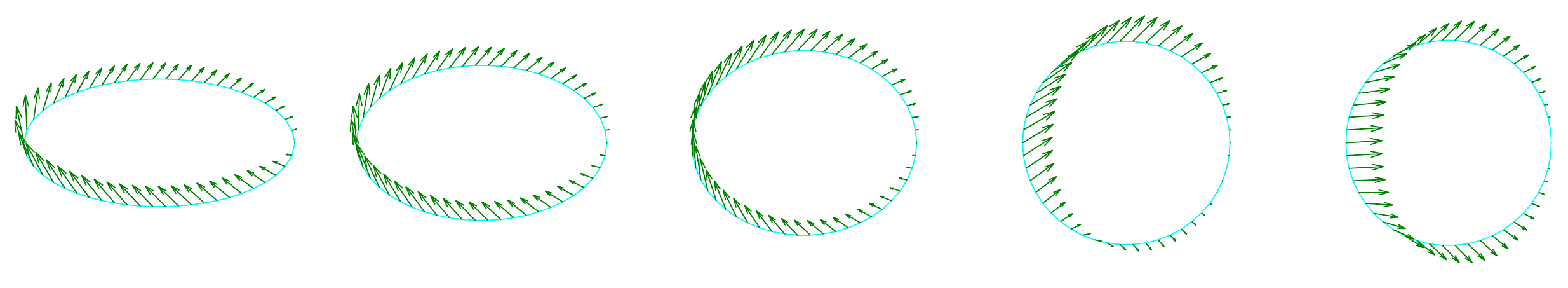}
		\includegraphics[width = 12cm]{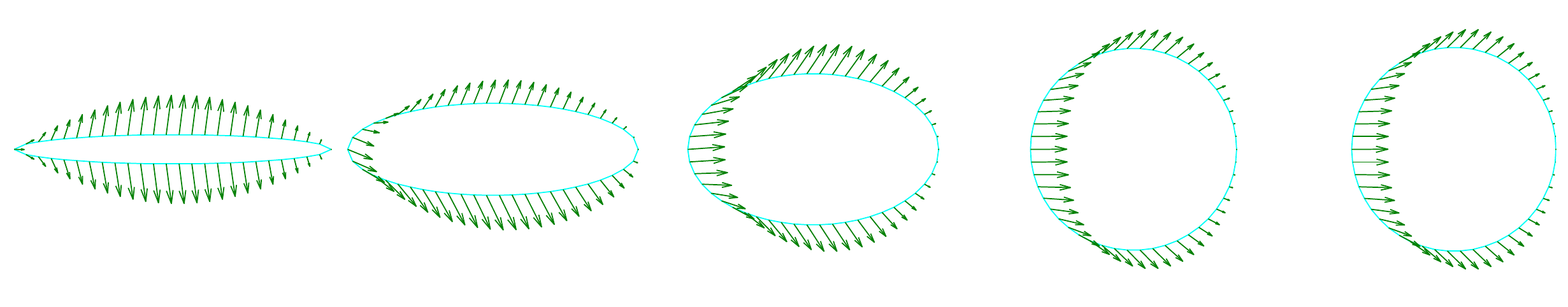}
 		\caption{Negative gradient of the energy functional at the middle of the path connecting a circle to the same circle via an ellipse for different values of the eccentricity of the middle ellipse. The first line corresponds to the values of parameters $a =0.01$ and $b=1$. The second line corresponds to $a = 100$ and $b=1$. }
 		\label{grad_lanscape}		
\end{figure}
To have an idea of the attraction basin of the constant geodesic for $a/b = 0.01$, one can vary the eccentricity of the middle ellipse in the initial path. Recall that the ellipse eccentricity is defined as $e = \sqrt{1-c^2/d^2}$ with $c$ the semi-minor axis and $d$ the semi-major axis. In Fig.~\ref{grad_lanscape}, we have depicted the gradient of the energy functional at the middle of the initial path for different values of the middle ellipse's eccentricity.  The first line corresponds to $a/b = 0.01$. From left to right the eccentricity of the ellipse at the middle of the path takes the values $0.8844$, $0.7882$, $0.5750$, $0.1980$ and $0.0632$. One sees a change in the vector field between the third and fourth picture: only when the middle ellipse is nearly a circle will  the path-straightening algorithm converge for the value $a/b = 0.01$. In comparison, the second line corresponds to $a/b = 100$. From left to right the eccentricity of the ellipse at the middle of the path takes the values $0.9963$, $0.95$, $0.8$, $0.1980$ and $0.0632$. In this case, the opposite of the gradient is bowing the ellipse even if the ellipse is very far from a circle.

Another aspect of the gradient in this toy example is that it is localized at the middle shape as is illustrated in Fig.~\ref{grad_localization}. In this picture the gradient is scaled uniformly. One sees that the gradient is nearly zero except at the middle shape. This is clearly a disadvantage for the path-straightening method since after one iteration of algorithm~\ref{path_straightening}, only the middle shape is significantly changed. This localization of the gradient imposes a small step size in order to avoid discontinuities in the path around the middle shape.
\begin{figure}[!ht]
 		\centering
        \includegraphics[width = 12cm]{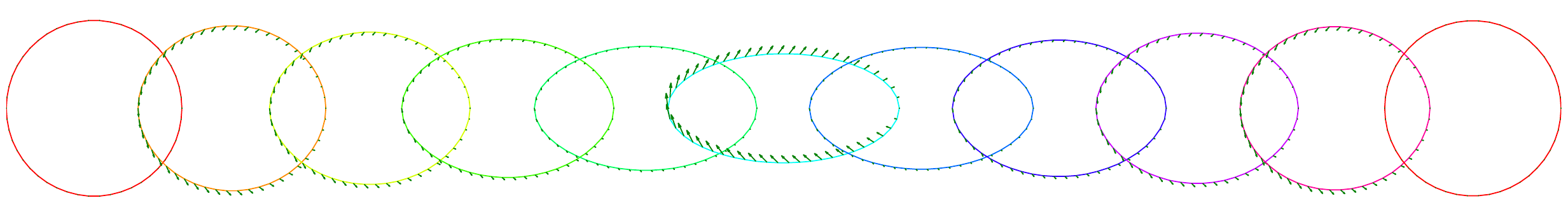}
        \includegraphics[width = 12cm]{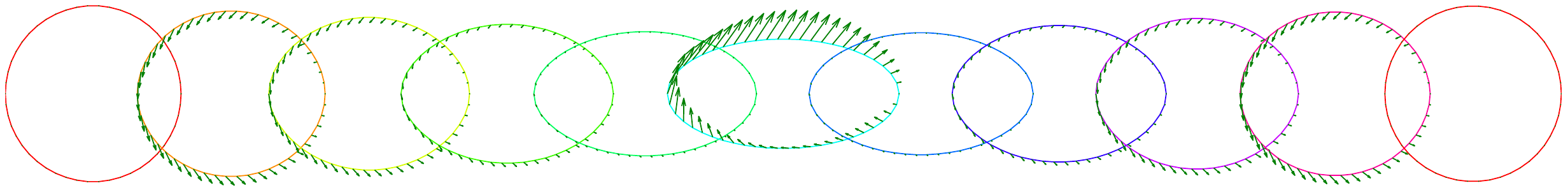}
        \includegraphics[width = 12cm]{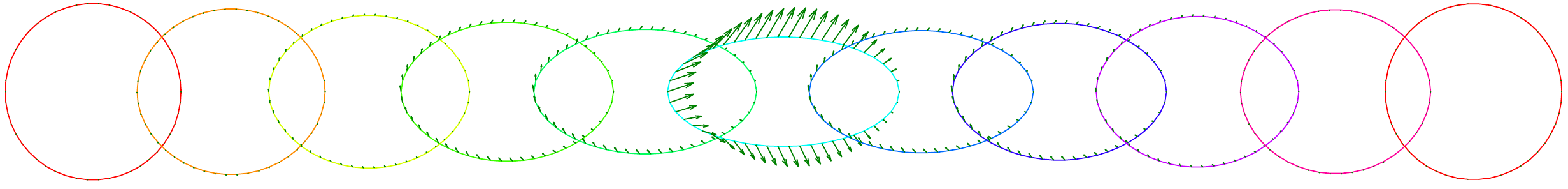}
 		\caption{Negative gradient of the energy functional along the path depicted in Fig.~\ref{initial_path_circle_ellipse} for $a=1$ (upper line), $a = 5$ (middle line) and $a = 50$ (lower line) and $b = 1$. }
 		\label{grad_localization}		
\end{figure}

In Fig.~\ref{carrel}, we show a $2$-parameter family of variations of a circle. The middle horizontal line corresponds to the deformation of the circle into an ellipse, and can be thought of as stretching the circle by pulling or pushing it to opposite circle points. In comparison, the middle vertical column corresponds to the deformation of the circle into a square and can be thought of as bending the circle at four corners. We built a $2$-parameter family of deformations of the constant path connecting a circle to itself by interpolating smoothly from the circle to one of these shapes at the middle of the path and back to the circle.
In Fig.~\ref{Ec}, the energy plots of the $2$-parameter family of paths obtained this way are depicted for $a= 0.01$, $b =1$ (left upper picture and nearly flat piece in the lower picture), and for $a = 100$, $b =1$ (right upper picture, and curved piece in the lower picture). One sees that, for the elastic metric with $a=0.01$, $b = 1$, both directions of deformation---turning a circle into an ellipse and turning a circle into a square---have the same energy amplitude. On the contrary, for the elastic metric with $a=100$ and $b=1$, one needs a lot more energy to deform a circle into an ellipse than to deform a circle into a square, i.e., stretching is predominant.
\begin{figure}[!ht]
 		\centering

		\includegraphics[width = 8cm]{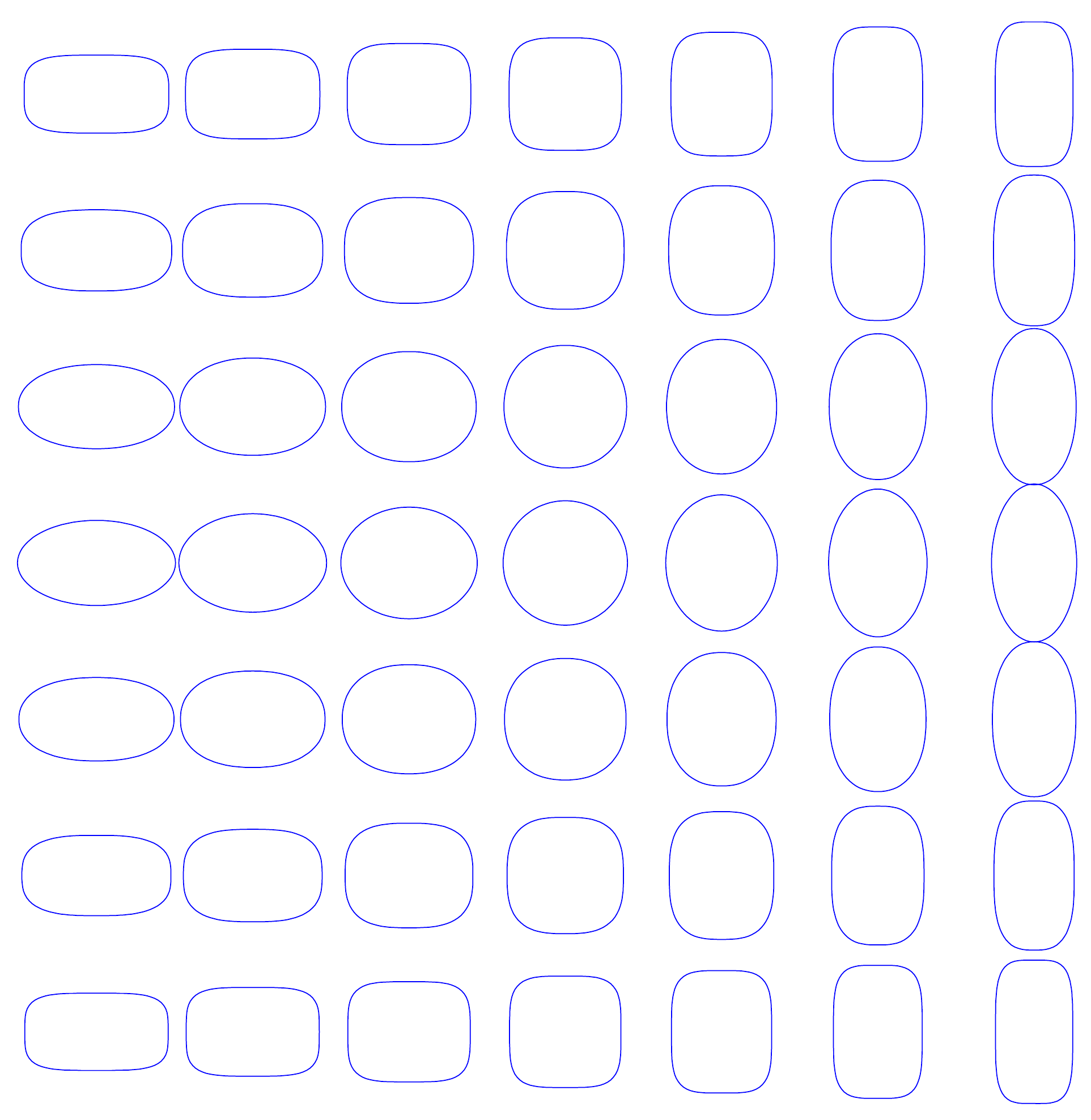}
 		\caption{$2$-parameter family of variations of the middle shape of a path connecting a circle to the same circle}
 		\label{carrel}		
\end{figure}
\begin{figure}[!ht]
 		\centering

		\includegraphics[width = 5.5cm]{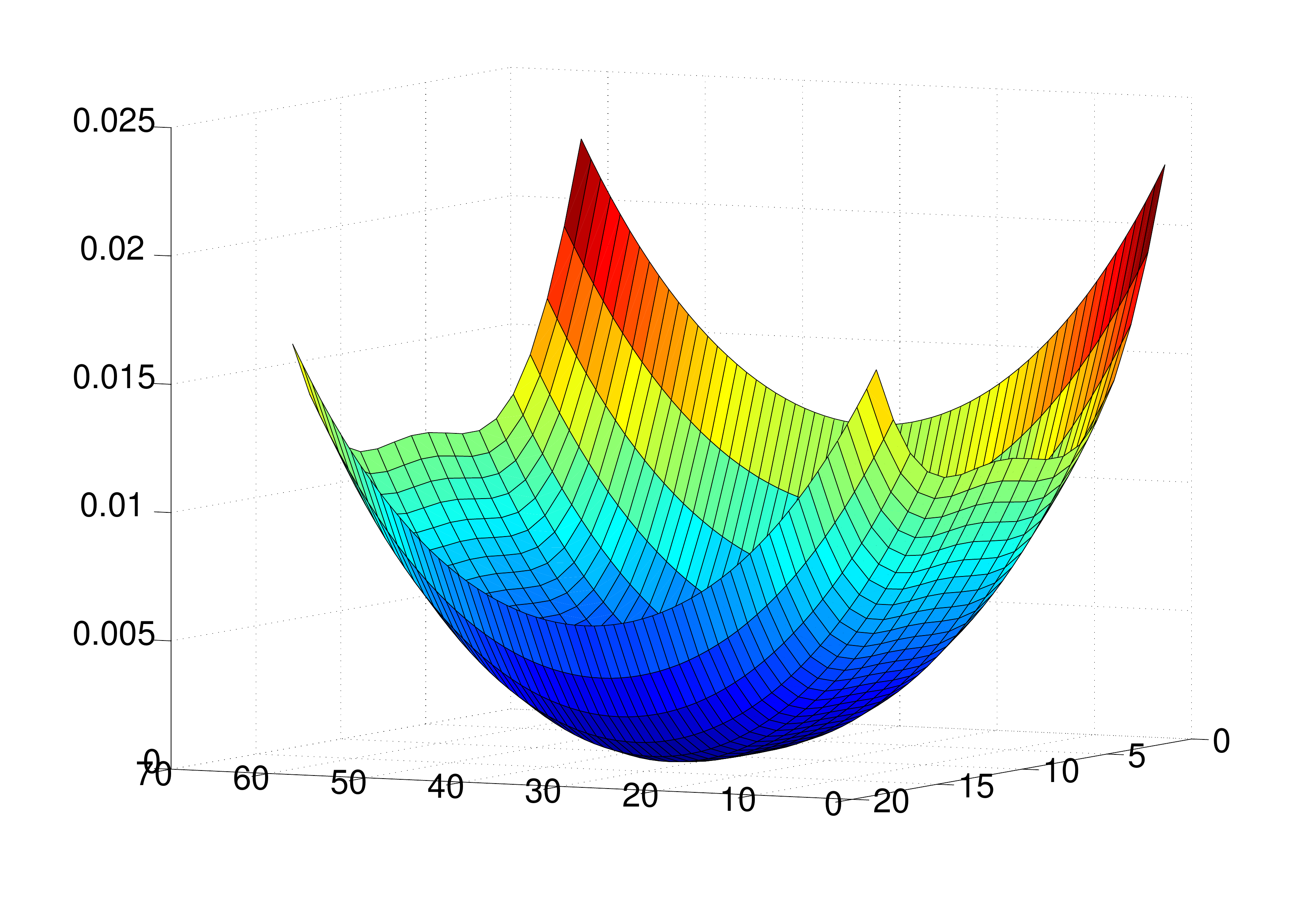}
		\includegraphics[width  = 5.5cm]{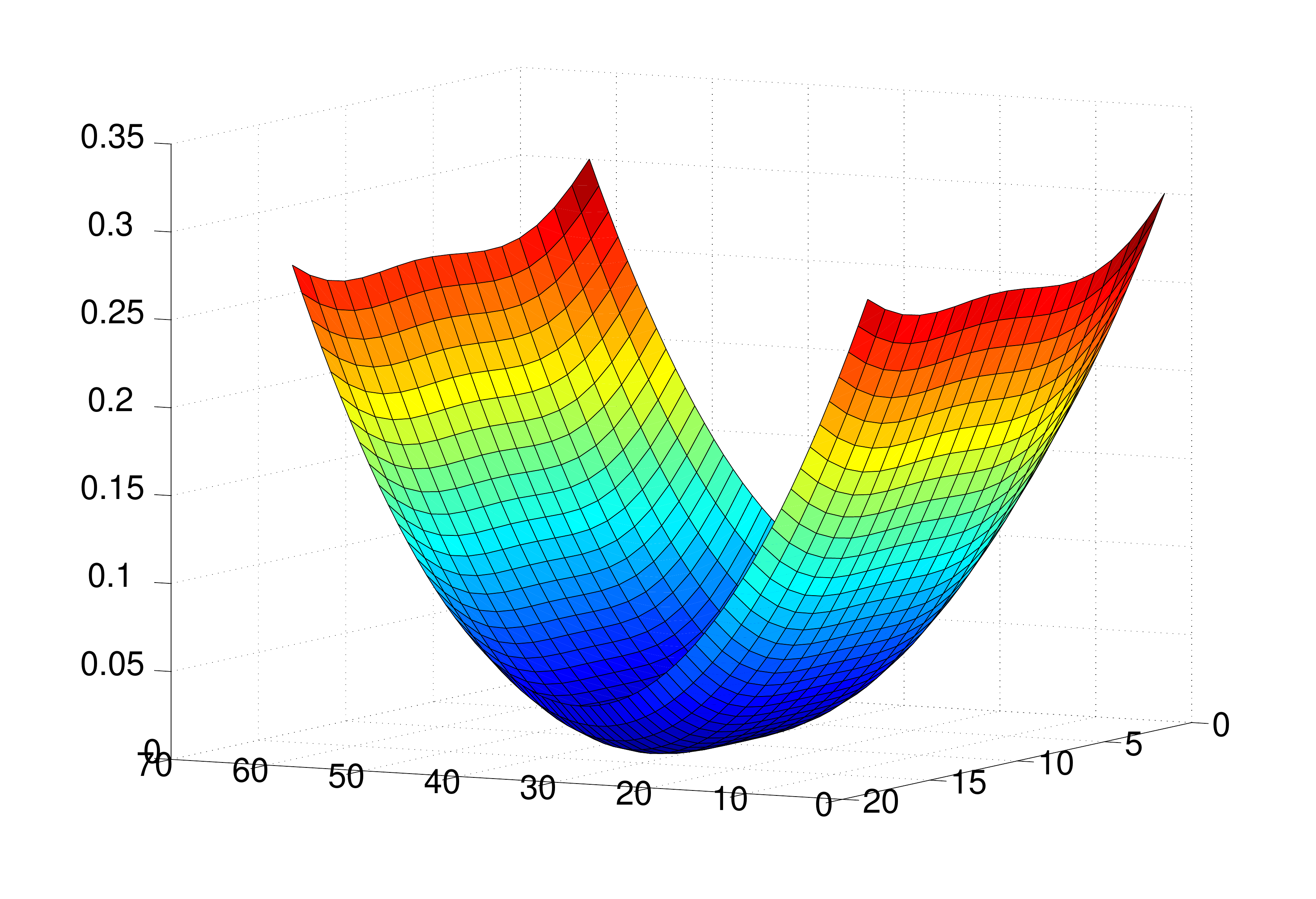}
                 \includegraphics[width  = 5.5cm]{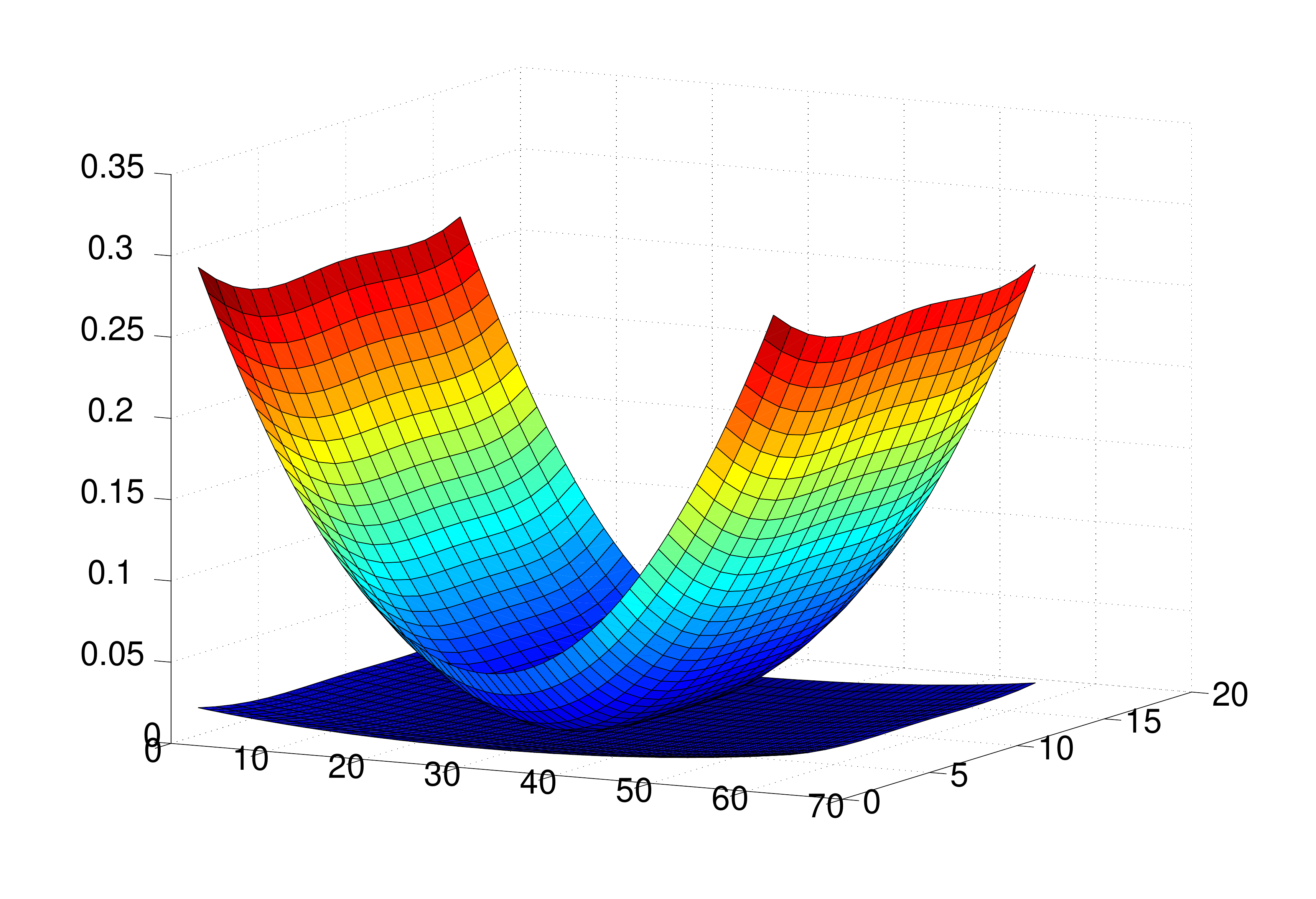}
 		\caption{Energy functional for the $2$-parameter family of paths whose middle shape is one of the shapes depicted in Fig.~\ref{carrel}. The left upper picture corresponds to $a= 0.01$, $b =1$ and the right upper  picture to $a = 100$, $b =1$. The lower picture shows the plots of both energy functionals with equal axis.}
 		\label{Ec}		
\end{figure}

Finally we consider in Fig.~\ref{geo_approxim} the problem of finding a geodesic from a Mickey Mouse hand to the same hand with a finger missing. The first line is obtained by taking the linear interpolation of the hands, when both hands are parameterized by arc-length. The second line is obtained by first taking the linear interpolation of the hands and then parameterizing each shape of the path by arc-length. The second path serves as initial path for the path-straightening method. The third line (resp. the fourth line, resp. the last line) corresponds to the path of minimal energy that we were able to find for $a = 0.01$, $b =1$ (resp. $a = 0.25$, $b =1$, resp. $a =100$, $b=1$), but the path-straightening algorithm is struggling in all cases. 
Note the different shapes of the growing finger when the parameters are changed. The energy of all these paths, for the different values of the parameters, is given in Tab.~\ref{energies}.
\begin{figure}[!ht]
 		\centering
		\includegraphics[width = 12cm]{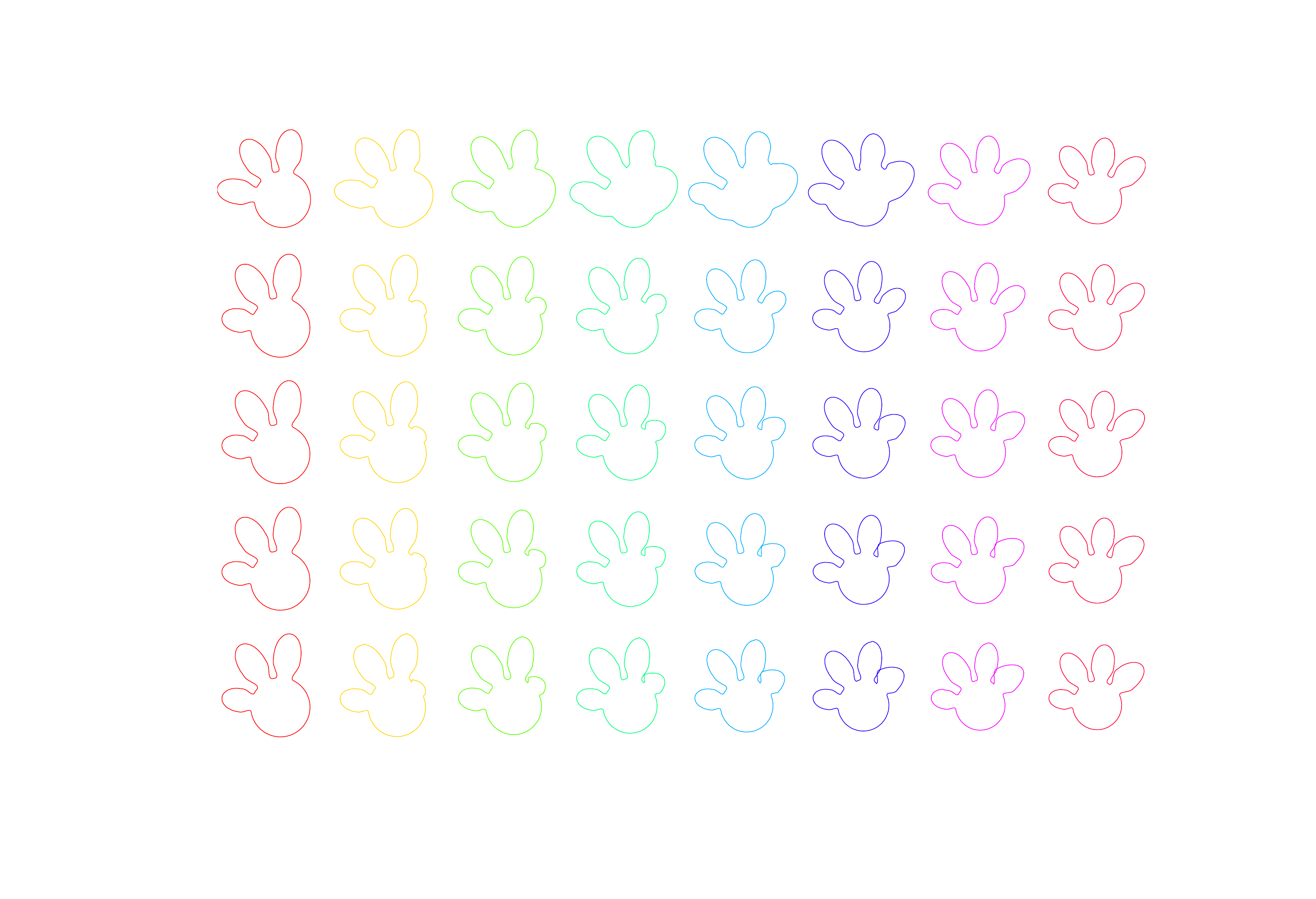}
 		\caption{Different paths connecting a Mickey Mouse hand to the same hand with a missing finger}
 		\label{geo_approxim}	
\end{figure}

\begin{table}
\begin{center}
\begin{tabular}{|c|c|c|c|c|c|}
\hline
\small{parameters} & \small{lin. interpol. 1} &  \small{lin. interpol. 2} & \small{path 3} & \small{path 4} & \small{path 5}\tabularnewline
\hline
\hline
$a=0.01$, $b = 1$ & 32.3749 & 27.45 & \textcolor{red}{25.3975} & 26.2504 & 28.3768
\tabularnewline
\hline
$a = 0.25$, $b =1$ & 63.1326 & 52.4110 & 47.8818 & \textcolor{red}{47.5037} & 48.2284
\tabularnewline
\hline
$a =100$, $b=1$ & 77.6407 & 66.6800 & 63.4840 & 60.9704 & \textcolor{red}{57.4557}
\tabularnewline
\hline
\end{tabular}
\end{center}\vspace*{6pt}
\caption{Energy of the paths depicted in Fig.~\ref{geo_approxim}.}
\label{energies}\vspace*{-6pt}
\end{table}

\section*{Conclusion}

In this paper, we studied the pull-back of the quotient elastic metrics to the space of arc-length parameterized plane curves of fixed length. We computed, for all values of the parameters, the exact energy functional as well as its gradient. These computations allowed us to illustrate how these metrics  behave with respect to stretching and bending. In particular, we showed that even for  small values of  $a/b$, stretching and bending have contributions of the same order of magnitude to the energy, a fact that may be surprising in regard to the expression of the elastic metric on parameterized curves. On the other hand, for large values of $a/b$, stretching has a predominant cost to the energy, as expected. This implies that the energy landscape is  steeper for big values of $a/b$ in the sense that some deformations are preferred, a property that facilitates convergence of a path-straightening algorithm.

\section*{Acknowledgments}

This work emerged from discussions held during the program 2015 ``Infinite-dimensional Riemannian geometry with applications to image matching and shape analysis," at Erwin Schr\"odinger Institute, Vienna, Austria, and the authors would like to thank Eric Klassen and Anuj Srivastava for fruitful interactions, as well as the organizers of this conference for providing such stimulating working conditions. This note was written during a visit of A.B. Tumpach
at the Pauli Institute, Vienna, Austria. The programs illustrated in this paper were implemented using a Matlab license of the University of Vienna. This work was also supported in part by the Labex CEMPI  (ANR-11-LABX-0007-01). Furthermore this work was partially supported by a grant from the Simons Foundation (\#318969 to Stephen C. Preston). The authors would like to thank the anonymous referees for reading carefully the manuscript and helping improve the paper by their pertinent suggestions.

\providecommand{\href}[2]{#2}
\providecommand{\arxiv}[1]{\href{http://arxiv.org/abs/#1}{arXiv:#1}}
\providecommand{\url}[1]{\texttt{#1}}
\providecommand{\urlprefix}{URL }

\medskip
Received January 2016; revised March 2017.
\medskip

\end{document}